\DeclareMathOperator*{\colim}{colim}
\numberwithin{equation}{section}
\begin{document}

\newtheorem{theorem}{Theorem}[section]
\newtheorem{axiom}[theorem]{Axiom}
\newtheorem{lemma}[theorem]{Lemma}
\newtheorem{proposition}[theorem]{Proposition}
\newtheorem{corollary}[theorem]{Corollary}

\theoremstyle{definition}
\newtheorem{definition}[theorem]{Definition}
\newtheorem{example}[theorem]{Example}

\theoremstyle{remark}
\newtheorem{remark}[theorem]{Remark}

\newenvironment{magarray}[1]
{\renewcommand\arraystretch{#1}}
{\renewcommand\arraystretch{1}}

\newenvironment{acknowledgement}{\par\addvspace{17pt}\small\rm
\trivlist\item[\hskip\labelsep{\it Acknowledgement.}]}
{\endtrivlist\addvspace{6pt}}

\newcommand{\quot}[2]{
{\lower-.2ex \hbox{$#1$}}{\kern -0.2ex /}
{\kern -0.5ex \lower.6ex\hbox{$#2$}}}

\newcommand{\mapor}[1]{\smash{\mathop{\longrightarrow}\limits^{#1}}}
\newcommand{\mapin}[1]{\smash{\mathop{\hookrightarrow}\limits^{#1}}}
\newcommand{\mapver}[1]{\Big\downarrow
\rlap{$\vcenter{\hbox{$\scriptstyle#1$}}$}}
\newcommand{\liminv}{\smash{\mathop{\lim}\limits_{\leftarrow}\,}}

% definition of categories
\newcommand{\sSet}{\mathbf{sSet}}
\newcommand{\Set}{\mathbf{Set}}
\newcommand{\Art}{\mathbf{Art}}
\newcommand{\CDGA}{\mathbf{CDGA}}
\newcommand{\CGA}{\mathbf{CGA}}
\newcommand{\DGCA}{\mathbf{CDGA}}
\newcommand{\sCA}{\mathbf{sCA}}
\newcommand{\Top}{\mathbf{Top}}
\newcommand{\DGMod}{\mathbf{DGMod}}
\newcommand{\DGLA}{\mathbf{DGLA}}
\newcommand{\DGArt}{\mathbf{DGArt}}

\newcommand{\solose}{\Rightarrow}
\newcommand{\PSI}{\Psi\mathbf{Sch}_I(\mathbf{M})}
\newcommand{\PSJ}{\Psi\mathbf{Sch}_J(\mathbf{M})}
\newcommand{\Sym}{\mbox{Sym}}

\newcommand{\specif}[2]{\left\{#1\,\left|\, #2\right. \,\right\}}

\renewcommand{\bar}{\overline}
\newcommand{\de}{\partial}
\newcommand{\debar}{{\overline{\partial}}}
\newcommand{\per}{\!\cdot\!}
\newcommand{\Oh}{\mathcal{O}}
\newcommand{\sA}{\mathcal{A}}
\newcommand{\sB}{\mathcal{B}}
\newcommand{\sC}{\mathcal{C}}
\newcommand{\sF}{\mathcal{F}}
\newcommand{\sG}{\mathcal{G}}
\newcommand{\sH}{\mathcal{H}}
\newcommand{\sI}{\mathcal{I}}
\newcommand{\sL}{\mathcal{L}}
\newcommand{\sM}{\mathcal{M}}
\newcommand{\sN}{\mathcal{N}}
\newcommand{\sP}{\mathcal{P}}
\newcommand{\sU}{\mathcal{U}}
\newcommand{\sV}{\mathcal{V}}
\newcommand{\sX}{\mathcal{X}}
\newcommand{\sY}{\mathcal{Y}}
\newcommand{\sW}{\mathcal{W}}
\newcommand{\Ni}{\mathcal{N}_I}
\newcommand{\Nj}{\mathcal{N}_J}
\newcommand{\bM}{\mathbf{M}}
\newcommand{\bC}{\mathbf{C}}
\newcommand{\PM}{\Psi\mathbf{Mod}}
\newcommand{\QCoh}{\mathbf{QCoh}}
\newcommand{\DGSch}{\mathbf{DGSch}}
\newcommand{\DGAff}{\mathbf{DGAff}}

\newcommand{\Aut}{\operatorname{Aut}}
\newcommand{\Cof}{\operatorname{Cof}}
\newcommand{\Imm}{\operatorname{Imm}}
\newcommand{\Mor}{\operatorname{Mor}}
\newcommand{\Def}{\operatorname{Def}}
\newcommand{\D}{\operatorname{D}}
\newcommand{\Ho}{\operatorname{Ho}}
\newcommand{\Mod}{\operatorname{Mod}}
\newcommand{\Hom}{\operatorname{Hom}}
\newcommand{\Hilb}{\operatorname{Hilb}}
\newcommand{\HOM}{\operatorname{\mathcal H}\!\!om}
\newcommand{\DER}{\operatorname{\mathcal D}\!er}
\newcommand{\Spec}{\operatorname{Spec}}
\newcommand{\Der}{\operatorname{Der}}
\newcommand{\Tor}{{\operatorname{Tor}}}
\newcommand{\Ext}{{\operatorname{Ext}}}
\newcommand{\End}{{\operatorname{End}}}
\newcommand{\END}{\operatorname{\mathcal E}\!\!nd}
\newcommand{\Image}{\operatorname{Im}}
\newcommand{\coker}{\operatorname{coker}}
\newcommand{\tot}{\operatorname{tot}}
\newcommand{\Id}{\operatorname{Id}}
\newcommand{\cone}{\operatorname{cone}}
\newcommand{\cocone}{\operatorname{cocone}}
\newcommand{\cyl}{\operatorname{cyl}}
\newcommand{\id}{\operatorname{id}}
\newcommand{\mA}{\mathfrak{m}_{A}}

\renewcommand{\Hat}[1]{\widehat{#1}}
\newcommand{\dual}{^{\vee}}
\newcommand{\desude}[2]{\dfrac{\de #1}{\de #2}}

\newcommand{\A}{\mathbb{A}}
\newcommand{\SExt}{\mathbb{S}\mbox{Ext}}
\newcommand{\N}{\mathbb{N}}
\newcommand{\R}{\mathbb{R}}
\newcommand{\Z}{\mathbb{Z}}
\renewcommand{\H}{\mathbb{H}}
\renewcommand{\L}{\mathbb{L}}
\newcommand{\proj}{\mathbb{P}}
\newcommand{\K}{\mathbb{K}\,}
\newcommand\C{\mathbb{C}}
\newcommand\Del{\operatorname{Del}}
\newcommand\Tot{\operatorname{Tot}}
\newcommand\Grpd{\mbox{\bf Grpd}}

\newcommand\é{\'e}
\newcommand\è{\`e}
\newcommand\à{\`a}
\newcommand\ì{\`i}
\newcommand\ù{\`u}
\newcommand\ò{\`o }

%%%%%%%%%%%%%%%%%%%%%%%%%%%%%%%%%%%%%%%%%%%%%%%%%%%%%%%%%%%%%%%%%%%%5555

\newcommand{\rh}{\rightarrow}
\newcommand{\contr}{{\mspace{1mu}\lrcorner\mspace{1.5mu}}}

\newcommand{\bi}{\boldsymbol{i}}
\newcommand{\bl}{\boldsymbol{l}}

\newcommand{\MC}{\operatorname{MC}}
\newcommand{\TW}{\operatorname{TW}}

\title{Formal deformation theory in left-proper model categories}
\date{\today}
\author{Marco Manetti}
\address{\newline
Universit\`a degli studi di Roma La Sapienza,\hfill\newline
Dipartimento di Matematica \lq\lq Guido
Castelnuovo\rq\rq,\hfill\newline
P.le Aldo Moro 5,
I-00185 Roma, Italy.}
\email{manetti@mat.uniroma1.it}
\urladdr{www.mat.uniroma1.it/people/manetti/}

\author{Francesco Meazzini}
\email{meazzini@mat.uniroma1.it}
\urladdr{www.mat.uniroma1.it/people/meazzini/}

\begin{abstract}
We develop the notion of deformation of a morphism in a left-proper model category. As an application 
we provide a geometric/homotopic description of deformations of commutative (non-positively) graded differential algebras over a local DG-Artin ring. 
\end{abstract}

\subjclass[2010]{18G55,14D15,16W50}
\keywords{Model categories, Deformation theory, Differential graded algebras}
%18G55 homotopical algebra
%16W50 graded rings and modules

\maketitle

\tableofcontents

\section*{Introduction}

This is the first of a series of papers devoted to the use of model category theory in the study of deformations of algebraic schemes and morphisms between them. 
In doing this we always try to reduce the homotopic and simplicial background at minimum, with the aim to be concrete and accessible to a wide community,  especially to everyone having a classical background in algebraic geometry and deformation theory.   

In order to explain the underlying ideas it is useful to sketch briefly their 
evolution, from the very beginning to the present form: needless to say that our plan of work is still fluid and several changes are possible in the near future. 

A very useful principle in deformation theory is that over a field of characteristic $0$ every deformation problem is controlled by a differential graded Lie algebra, according to the general and well understood construction of Maurer-Cartan modulus gauge action, see e.g. \cite{GM,Man}. 
As properly stated in \cite{Manin17},
the explicit construction of the relevant DG-Lie algebra controlling a given problem requires creative
thinking and the study of instructive examples existing in
the literature. 

For an affine scheme, it is well known and easy to prove that the DG-Lie algebra of derivations of a multiplicative Tate-Quillen resolution controls its deformations, since 
the Maurer-Cartan elements correspond to perturbation of the differential of the resolution. According to Hinich \cite{H04} the same recipe extends to (non-positively graded) DG-affine schemes and give a good notion of deformations of such objects over a (non-positively graded) differential graded local Artin ring (see also \cite[Section 4]{Man2} for a partial result in this direction). 

This  example is very instructive and suggests that for general separated schemes, the right DG-Lie algebra controlling deformations should be constructed by taking derivations of a Palamodov resolvent (possibly of special kind). 
Here the problem to face is that a Palamodov resolvent, as classically defined \cite{BF,Pal}, carries inside a quite complex  combinatorial structure, that leads to  very complicated computations in every attempt to 
prove the desired results. 

The key idea to overcome this difficulty is to interpret this combinatorics 
as the property of being cofibrant in a suitable model category, and then use the various lifting and factorization axioms of model categories in order to provide clear  and conceptually easier proofs.
However, it is our opinion that this approach works very well and gain new additional insight  whenever 
also the deformation theory of affine schemes is revisited in the framework of model categories. 
Since every multiplicative Tate-Quillen resolution of a commutative algebra is a special kind of cofibrant replacement in the category   $\DGCA_{\K}^{\le 0}$ of differential graded commutative algebras in non-positive degrees, equipped with the projective model structure, it is convenient to 
express, as much as possible, the notion of deformation in terms of the model structure. 
This will be quite easy for the condition of flatness (Definition~\ref{def.flatmorphism}, modelled on the notion of DG-flatness of \cite{A-F}) and for the 
local Artin ring version of Nakayama's lemma (Definition~\ref{def.M(K)}).

The first main result of this paper is to define a ``good'' formal deformation theory of a morphism on every model category in which every cofibration is flat: several left-proper model categories used in concrete applications have this property, included $\DGCA_{\K}^{\le 0}$. 
A remarkable fact is that the deformation theory of a morphism is homotopy invariant: more precisely given morphisms 
$K\xrightarrow{f}X \xrightarrow{g}Y$ with $g$ a weak equivalence, then the two morphisms $f$ and $gf$ have the same deformation theory; this allows in particular the possibility to restrict 
our attention to deformations of cofibrations.

The second main result is the proof that in the category  $\DGCA_{\K}^{\le 0}$ our general notion 
of deformation is equivalent to the notion introduced by Hinich and in particular 
gives the classical notion of deformation when restricted to algebras concentrated in degree 0. 
The main ingredient of the proof, that we consider of independent interest, is that 
both the left and right lifting properties and the (C-FW), (CW-F) factorization properties are unobstructed in the sense of \cite{Man2}, i.e., can be lifted along every surjective morphism of DG-local Artin rings (Theorems~\ref{thm.slashboxes}, 
\ref{thm.liftingfactorization} and \ref{thm.liftingfactorization2}).
The case of lifting properties is easy, while the unobstructedness  of (C-FW) and (CW-F) factorizations are quite involved and are proved as a consequence of a non-trivial technical result about liftings of trivial idempotents in cofibrant objects (Theorem~\ref{thm.liftingidempotents}): all this technical results will be extremely important in our next paper in order to treat in an easy way deformations of general separated algebraic schemes.

\section{Notation and preliminary results} 

The general theory is carried out on 
a fixed model category $\bM$, although the main relevant examples for the  applications of this paper are the  categories $\DGCA_{\K}$ of differential graded commutative algebras  over a field $\K$ of characteristic 0, and its full subcategory $\DGCA_{\K}^{\le 0}$ of algebras concentrated in non-positive degrees,
equipped with the projective model structure (\cite{BG76}, \cite[V.3]{GelfandManin}): in both categories 
weak equivalences are the quasi-isomorphisms and  cofibrations are the retracts of semifree extensions. Fibrations in  $\DGCA_{\K}$  (resp.: $\DGCA_{\K}^{\le 0}$) are the surjections (resp.: surjections in strictly negative degrees). In particular a morphism in $\DGCA_{\K}^{\le 0}$ is a weak equivalence (resp.: cofibration, trivial fibration) if and only if has the same property as a morphism in $\DGCA_{\K}$.

Starting from Section~\ref{section.deformationmorphism} we shall assume that $\bM$ satisfies the additional property introduced in Definition~\ref{def.SLP}.

For every object $A\in \bM$ we shall denote by  $A\!\downarrow\!\bM$ (or equivalently by $\bM_A$) the model undercategory of maps $A\to X$ in $\bM$, and by $\bM\!\downarrow\! A$ the overcategory of maps $X\to A$, \cite[p. 126]{Hir03}. Notice that for every $f\colon A\to B$ we have $(A\downarrow \bM)\downarrow f=f\downarrow(\bM\downarrow B)$.

Every morphism $f\colon A\to B$ in $\mathbf{M}$ induces two functors:
\[ f^{\ast}=-\circ f\colon  \mathbf{M}_B\to \mathbf{M}_A,\qquad (B\to X)\mapsto (A\xrightarrow{f}B\to X),\]
\[ f_{\ast}=-\amalg_A B\colon \mathbf{M}_A\to \mathbf{M}_B,\qquad X\mapsto X\amalg_AB\,.\]
According to the definition of the model structure in the undercategories of $\bM$, a morphism $h$ in $\bM_B$ is a weak equivalence (respectively: fibration, cofibration) if and only if $f^{\ast}(h)$ is a weak equivalence (respectively: fibration, cofibration), see~\cite[p. 126]{Hir03}.

For notational simplicity, in the diagrams we adopt the following labels about maps: $\sC$=cofibration, $\sF$=fibration, $\sW$=weak equivalence, $\sC\sW$=trivial cofibration, $\sF\sW$=trivial fibration. We also adopt the labels $\lrcorner$ for denoting pullback (Cartesian) squares, and $\ulcorner$ for pushout (coCartesian) squares.

\begin{definition}\label{def.idempotent}
An idempotent in $\mathbf{M}$ is an endomorphism $e\colon Z\to Z$ such that $e\circ e = e$. 
We shall say that $e$ is a trivial idempotent if it is also a weak equivalence.
The fixed locus $\iota\colon F_e\to Z$ of an idempotent $e\colon Z\to Z$ is the 
limit of the diagram
\[ \xymatrix{	Z \ar@/^/[r]^{\id_Z} \ar@/_/[r]_{e} & Z	} \] 
or, equivalently, the fibred product of the cospan 
\[ Z\xrightarrow{(\id_Z,\id_Z)}Z\times Z\xleftarrow{(e,\id_Z)}Z\,.\]
\end{definition}

\begin{lemma}\label{prop.fixedloci}
Let $e\colon Z\to Z$ be an idempotent in a model category $\mathbf{M}$ with fixed locus $\iota\colon F\to Z$. Then the following holds.
\begin{enumerate}
\item There exists a retraction
\[ F \xrightarrow{\iota} Z\xrightarrow{p} F \]
such that $\iota p = e$. In particular $p$ is a retract of $e$ and $pe=p$, $e\iota=\iota$.
If $e$ is a trivial idempotent, then  $\iota$ and $p$ are weak equivalences.

\item If there exists a retraction
\[ X \xrightarrow{i} Z\xrightarrow{q} X \]
such that $iq= e$  then $X\xrightarrow{i} Z$ is canonically isomorphic to the fixed locus of $e$.

\item The fixed locus of idempotents commutes with pushouts, i.e., for every span $Z\xleftarrow{f} A\to B$ 
and for every idempotent $e\colon Z\to Z$ such that $ef=f$,  
the fixed locus of the induced idempotent $e'\colon Z\amalg_AB\to Z\amalg_AB$ is naturally isomorphic to 
$\iota'\colon F\amalg_AB\to Z\amalg_AB$.
\end{enumerate}
\end{lemma}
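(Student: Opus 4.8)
The plan is to treat all three parts by pure category theory, invoking the model structure only for the weak-equivalence assertion in part (1). The backbone is the universal property of $F$ as the limit (equalizer) of $\id_Z$ and $e$, together with the elementary fact that $\iota$ is a monomorphism. For part (1), I would first observe that idempotency gives $\id_Z\circ e=e\circ e$, so $e$ equalizes $\id_Z$ and $e$ and the universal property yields a unique $p\colon Z\to F$ with $\iota p=e$. Since $\iota$ itself equalizes the two maps we have $e\iota=\iota$, whence $\iota(p\iota)=(\iota p)\iota=e\iota=\iota$ and, $\iota$ being mono, $p\iota=\id_F$; the identity $pe=p$ follows identically from $\iota(pe)=ee=e=\iota p$. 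That $p$ is a retract of $e$ is then read off the diagram with vertical maps $p,e,p$, top row $\id_Z,\id_Z$ and bottom row $\iota,p$: the two squares commute precisely because $\iota p=e$ and $pe=p$, and both horizontal composites are identities. Finally, if $e$ is a weak equivalence then $p$, being a retract of $e$, is one too, and $\id_F=p\iota$ together with two-out-of-three forces $\iota$ to be a weak equivalence as well.

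For part (2), the retraction $iq=e$, $qi=\id_X$ gives $ei=iqi=i$, so $i$ equalizes $\id_Z$ and $e$ and factors uniquely as $i=\iota\phi$ for some $\phi\colon X\to F$. I would then set $\psi=q\iota\colon F\to X$ and verify the two relations $\psi\phi=q\iota\phi=qi=\id_X$ and $\iota(\phi\psi)=i\psi=iq\iota=e\iota=\iota$, the latter yielding $\phi\psi=\id_F$ by monicity of $\iota$. Thus $\phi$ is an isomorphism commuting with the structure maps to $Z$, which is exactly the asserted canonical identification of $i\colon X\to Z$ with $\iota\colon F\to Z$.

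Part (3) is the one requiring real care, and I would deliberately reduce it to part (2) rather than try to commute the limit defining $F$ past the pushout. Writing $j_Z\colon Z\to Z\amalg_A B$ and $j_B\colon B\to Z\amalg_A B$ for the structure maps of the span $Z\xleftarrow{f}A\xrightarrow{g}B$, the hypothesis $ef=f$ guarantees that $j_Z e$ and $j_B$ agree on $A$, so they induce the idempotent $e'$ with $e'j_Z=j_Z e$ and $e'j_B=j_B$. The enabling step is that $ef=f$ also forces $f$ to factor through the fixed locus: since $f$ equalizes $\id_Z$ and $e$ we get $f=\iota(pf)$, so the span lifts to $F\xleftarrow{pf}A\xrightarrow{g}B$ and one may form $F\amalg_A B$ with structure maps $k_F,k_B$. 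Now $\iota'$ is induced by $j_Z\iota$ and $j_B$, and I would construct the candidate retraction $p'\colon Z\amalg_A B\to F\amalg_A B$ induced by $k_F p$ and $k_B$, the required compatibilities on $A$ being exactly $k_F(pf)=k_B g$ and $j_Z f=j_B g$. Checking $p'\iota'=\id$ and $\iota'p'=e'$ then reduces to verifying each identity after precomposition with $j_Z,j_B$ (respectively $k_F,k_B$) and invoking the universal property of the pushout, where everything collapses through $p\iota=\id_F$ and $\iota p=e$; part (2) applied to the retraction $(\iota',p')$ of $e'$ delivers the claimed canonical isomorphism.

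The main obstacle is the bookkeeping in part (3): one must keep straight four families of structure maps and confirm that each induced morphism is well defined on the relevant pushout before the universal-property verifications go through. The conceptual content, by contrast, lies entirely in the realization of part (2) that a fixed locus is characterized by a splitting, so that its compatibility with pushouts becomes the routine matter of transporting that splitting across the colimit.
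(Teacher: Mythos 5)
Your proof is correct and follows essentially the same route as the paper's: part (1) from the universal property of the equalizer (the paper's ``immediate consequence''), part (2) by producing the mutually inverse maps (your $\phi$ equals the paper's $pi$, and $\psi=q\iota$ is identical), and part (3) by lifting $f$ through $\iota$ to a span over $F$ and transporting the retraction across the pushout before invoking part (2). The only difference is presentational: where you verify $p'\iota'=\id$ and $\iota'p'=e'$ by hand against the four families of structure maps, the paper simply cites the fact that retractions, being equationally defined, are preserved by the functor $-\amalg_A B$.
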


\begin{proof} The first item is an immediate consequence of the universal property of limits applied to the diagram 
\[ \begin{matrix}\xymatrix{&Z\ar[dl]_e\ar[dr]^e&\\	
Z \ar@/^/[rr]^{\id_Z} \ar@/_/[rr]_{e} && Z	}\end{matrix}\;. \] 
For the second item, since $qe=qiq=q$, $ei=iqi=i$, we have that the two morphisms 
\[ X\xrightarrow{i}Z\xrightarrow{p}F,\qquad F\xrightarrow{\iota}Z\xrightarrow{q}X,\]
are one the inverse of the other.

In the the last item, the morphism $f$ lifts to a morphism $A\to F$ and the proof  
follows immediately from the fact  that retractions are preserved by pushouts.
\end{proof}

\bigskip
\section{Flatness in model categories}

Let $\bM$ be a model category  and let $\sG$ be a class of morphisms of $\bM$ containing 
all the isomorphisms and such that $\sG$ is closed under composition.

\begin{definition}\label{def.Gcofibration} 
A morphism $f\colon A\to B$ in $\bM$ is called
a $\sG$-\textbf{cofibration} if for every  $A\to M\xrightarrow{g}N$ with $g\in \sG$, the pushout morphism  
\[ M\amalg_AB\xrightarrow{\quad}N\amalg_AB \]
belongs to $\sG$.
\end{definition}

\begin{example}
When $\sG$ is exactly the class of isomorphisms, then every morphism is a $\sG$-cofibration.
\end{example}

\begin{remark}
Since finite colimits are defined by a universal property, they are defined up to isomorphism: therefore the assumption on the class $\sG$ are required in order to have that the notion of $\sG$-cofibration makes sense.   
\end{remark}

\begin{lemma}\label{lem.stability} 
In the situation of Definition~\ref{def.Gcofibration}, the class of $\sG$-cofibrations  contains the isomorphisms and is closed under composition and pushouts. If $\sG$ is closed under retractions, then the same holds for $\sG$-cofibrations.
\end{lemma}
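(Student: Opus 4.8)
The plan is to verify each of the four closure properties directly from Definition~\ref{def.Gcofibration}, treating them in order of increasing difficulty. Throughout, the strategy is to take an arbitrary test datum $A\to M\xrightarrow{g}N$ with $g\in\sG$ and to show that the induced pushout map lies in $\sG$, reducing each statement to the hypotheses on $\sG$ (containing isomorphisms, closed under composition, and — for the last clause — closed under retractions).

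\begin{proof}
That the isomorphisms are $\sG$-cofibrations is immediate: if $f\colon A\to B$ is an isomorphism, then for any $A\to M\xrightarrow{g}N$ the pushout $M\amalg_A B\to N\amalg_A B$ is isomorphic to $g$ itself (pushing out along an isomorphism changes nothing up to canonical isomorphism), hence lies in $\sG$ because $\sG$ contains all isomorphisms and is closed under composition.

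For closure under composition, suppose $f\colon A\to B$ and $f'\colon B\to C$ are $\sG$-cofibrations and let $A\to M\xrightarrow{g}N$ with $g\in\sG$ be given. The key is the transitivity of pushouts: the composite square for $A\to C$ decomposes as two stacked pushout squares, so that $M\amalg_A C\cong (M\amalg_A B)\amalg_B C$ and similarly for $N$. First I would use that $f$ is a $\sG$-cofibration to conclude that $M\amalg_A B\to N\amalg_A B$ belongs to $\sG$; this serves as a new test morphism under $B$. Then, since $f'$ is a $\sG$-cofibration, pushing this $\sG$-morphism out along $f'$ yields $(M\amalg_A B)\amalg_B C\to (N\amalg_A B)\amalg_B C$ in $\sG$. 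Identifying these with $M\amalg_A C\to N\amalg_A C$ gives the claim.

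For closure under pushouts, let $f\colon A\to B$ be a $\sG$-cofibration and form a pushout $B\to B'=B\amalg_A A'$ along some $A\to A'$, yielding $f'\colon A'\to B'$. Given a test datum $A'\to M\xrightarrow{g}N$ with $g\in\sG$, I would restrict along $A\to A'$ to view $M,N$ as objects under $A$ and invoke the pushout-transitivity identity $M\amalg_{A'}B'\cong M\amalg_A B$ (and likewise for $N$), which holds because $B'=B\amalg_A A'$. Under this identification the map $M\amalg_{A'}B'\to N\amalg_{A'}B'$ becomes $M\amalg_A B\to N\amalg_A B$, which lies in $\sG$ since $f$ is a $\sG$-cofibration. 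Finally, for the last clause, suppose $\sG$ is closed under retractions and let $f$ be a retract of a $\sG$-cofibration $\bar f$. For any test datum $A\to M\xrightarrow{g}N$, functoriality of the pushout turns the retract diagram exhibiting $f$ as a retract of $\bar f$ into a retract diagram exhibiting $M\amalg_A B\to N\amalg_A B$ as a retract of the corresponding pushout map for $\bar f$, which lies in $\sG$; closure of $\sG$ under retractions then finishes the proof.

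The only genuinely delicate points are the canonical pushout-transitivity isomorphisms used in the composition and pushout clauses. These are standard consequences of the universal property of colimits, but one must check they are compatible with the maps in question; the assumption that $\sG$ contains isomorphisms and is closed under composition is exactly what lets these canonical isomorphisms be absorbed harmlessly, so no further hypotheses are needed.
\end{proof}
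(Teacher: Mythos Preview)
Your proof is correct and follows essentially the same approach as the paper's: both use pushout-transitivity for composition and pushouts, and for the retract clause both observe that the retract diagram for $f$ as a retract of $\bar f$ induces, via functoriality of pushouts (viewing $M$ under $C$ through $C\to A\to M$), a retract of $M\amalg_A B\to N\amalg_A B$ off the corresponding $\sG$-map for $\bar f$. The paper is only slightly more explicit in writing out the three-column diagram producing the retraction $M\amalg_A B\to M\amalg_C D\to M\amalg_A B$.
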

 
\begin{proof}
It is plain that every isomorphism is a $\sG$-cofibration. Let $f\colon A\to B$ and $g\colon B\to C$ be $\sG$-cofibration; then for every $A\to M\xrightarrow{h}N$, if $h\in \sG$ then also the morphism $M\amalg_AB\xrightarrow{h_B}N\amalg_AB$ belongs to $\sG$, and therefore also the morphism 
\[ M\amalg_AC= (M\amalg_AB)\amalg_BC\xrightarrow{h_C}(N\amalg_AB)\amalg_BC=N\amalg_AC\;\]
belongs to $\sG$.
Let $A\to B$ be a $\sG$-cofibration and $A\to C$ a morphism. For every $C\to M\xrightarrow{h\in \sG}N$ we have 
\[ M\amalg_C(C\amalg_AB)=M\amalg_AB\xrightarrow{\,\sG\,}N\amalg_AB = N\amalg_C(C\amalg_AB)\;,\]
and then $C\to C\amalg_AB$ is a $\sG$-cofibration.

Finally, assume that $\sG$ is closed under retracts and consider a retraction 
\[ \xymatrix{A\ar[r]\ar[d]^f&C\ar[r]^{p}\ar[d]^g&A\ar[d]^f\\
B\ar[r]&D\ar[r]^q&B	.}\]
Then every morphism $A\xrightarrow{\alpha} M$ gives a commutative diagram 
\[\xymatrix{M\ar[r]^{\Id}&M\ar[r]^{\Id}&M\\
A\ar[r]\ar[d]^f\ar[u]^\alpha&C\ar[r]^{p}\ar[d]^g\ar[u]^{p\alpha}&A\ar[d]^f\ar[u]^\alpha\\
B\ar[r]&D\ar[r]^q&B}\]
and then a functorial retraction $M\amalg_AB\to M\amalg_CD\to M\amalg_AB$.

If $g$ is a $\sG$-cofibration, then  $f$ is a $\sG$-cofibration, since  
for every $A\to M\xrightarrow{\sG}N$ the morphism 
$M\amalg_AB\to N\amalg_AB$ is a retract of $M\amalg_CD\xrightarrow{\sG} N\amalg_CD$.
\end{proof}

%\begin{corollary}\label{cor.kanperpiflatimmersion}
%Assume that the class $\sG$ satisfies the \emph{2 out of 3} axiom. Let $A\xrightarrow{f}B$ be a $\sG$-immersion and $B\xrightarrow{g}C$ a $\sG$-cofibration. 
%Then $g$ is a $\sG$-immersion if and only if $gf$ is a $\sG$-immersion.
%\begin{proof}
%We have already seen in the proof of Lemma~\ref{lem.stability} that for every morphism $C\to D$, the morphisms $D\coprod_AC\to D\coprod_BC$ belongs to $\sG$.
%\end{proof}
%\end{corollary}

From now on we restrict to consider the case where $\sG=\sW$ is the class of weak equivalences, and we shall denote by $\Cof_{\sW}$  the class of $\sW$-cofibrations.
Moreover, an object in $\bM$ is called $\sW$-\textbf{cofibrant} if its initial map is a $\sW$-cofibration.
Recall \cite[Def. 13.1.1]{Hir03} that a model category is called left-proper if weak equivalences are preserved under pushouts along cofibrations; equivalently, a model category is left-proper if and only if every cofibration is a $\sW$-cofibration ($\sC\subset \Cof_{\sW}$). 

The class $\Cof_{\sW}$ of $\sW$-cofibrations was considered by Grothendieck in his personal approach to model categories \cite[page 8]{derivateur}, and more recently by Batanin and Berger \cite{BB} under the name of $h$-cofibrations.

\begin{lemma}\label{lem.preservaweak}
In a left-proper model category   weak equivalences between $\sW$-cofibrant objects are preserved by pushouts, i.e. for every commutative diagram
\[\begin{matrix}\xymatrix{A\ar[dr]_{g}\ar[r]^{f}&E\ar[d]^{h}\\
&D}\;\end{matrix},\qquad f,g\in \Cof_{\sW},\quad h\in \sW,\]
and every morphism $A\to B$ the pushout map $E\amalg_A B\to D\amalg_A B$ is a weak equivalence.
\end{lemma}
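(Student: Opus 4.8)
The plan is to reduce the general statement to the special case in which the map $A\to B$ is a cofibration, where the assertion becomes an instance of the classical left-proper gluing lemma, and to carry out this reduction using the hypothesis that $f$ and $g$ are $\sW$-cofibrations. First I would factor the given morphism as $A\xrightarrow{j}B'\xrightarrow{p}B$ with $j$ a cofibration and $p$ a weak equivalence (for instance via the $(\sC,\sF\sW)$ factorization). Naturality in the last variable then produces a commutative square
\[\xymatrix{E\amalg_AB'\ar[r]\ar[d]&D\amalg_AB'\ar[d]\\ E\amalg_AB\ar[r]&D\amalg_AB}\]
whose horizontal maps are induced by $h$ and whose vertical maps are induced by $p$. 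By the two-out-of-three axiom it suffices to prove that the top horizontal map and the two vertical maps are weak equivalences.

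For the vertical maps I would exploit $\sW$-cofibrancy. Since $f\in\Cof_{\sW}$, Lemma~\ref{lem.stability} shows that the pushout $B'\to E\amalg_AB'$ is again a $\sW$-cofibration. Applying the defining property of this $\sW$-cofibration to the weak equivalence $p\colon B'\to B$, regarded as a morphism under $B'$ via the identity $B'\to B'$, gives that $E\amalg_AB'\to B\amalg_{B'}(E\amalg_AB')$ is a weak equivalence. Pasting the two pushout squares
\[\xymatrix{A\ar[r]\ar[d]_f&B'\ar[r]^p\ar[d]&B\ar[d]\\ E\ar[r]&E\amalg_AB'\ar[r]&B\amalg_{B'}(E\amalg_AB')}\]
identifies $B\amalg_{B'}(E\amalg_AB')$ with $E\amalg_AB$, so that $E\amalg_AB'\to E\amalg_AB$ is a weak equivalence. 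The identical argument with $g\in\Cof_{\sW}$ in place of $f$ handles $D\amalg_AB'\to D\amalg_AB$.

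It remains to treat the top map $E\amalg_AB'\to D\amalg_AB'$, and here the point is that $A\to B'$ is now a cofibration. Its pushout $E\to E\amalg_AB'$ along $f$ is therefore again a cofibration, and pasting pushouts (using $g=hf$) identifies $D\amalg_AB'$ with $D\amalg_E(E\amalg_AB')$. Thus $E\amalg_AB'\to D\amalg_AB'$ is precisely the pushout of the weak equivalence $h\colon E\to D$ along the cofibration $E\to E\amalg_AB'$, and left-properness of $\bM$ gives that it is a weak equivalence. This completes all three verifications and hence, by two-out-of-three, the proof.

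I expect the main obstacle to be this reduction step rather than the final gluing. The naive idea of pushing $h$ out directly along $A\to B$ fails, because $A\to B$ need not be a cofibration and so $E\to E\amalg_AB$ need not be a cofibration, leaving left-properness inapplicable; the entire purpose of the class $\Cof_{\sW}$ is to bridge exactly this gap, and the delicate part is getting the pasting identifications in the vertical comparison correct so that the $\sW$-cofibration property is applied to a genuine weak equivalence under the relevant object.
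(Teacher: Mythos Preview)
Your proof is correct and follows essentially the same approach as the paper: factor $A\to B$ as a cofibration followed by a weak equivalence, then use the $\sW$-cofibrancy of $f$ and $g$ for the two comparison maps and left-properness for the remaining one, concluding by two-out-of-three. The only cosmetic difference is that the paper applies the definition of $\sW$-cofibration for $f$ and $g$ directly to the weak equivalence $B'\to B$ under $A$, whereas you first push out to $B'$ via Lemma~\ref{lem.stability} and then apply the definition there; the effect is identical.
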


\begin{proof}
Consider a factorization $A\xrightarrow{\alpha}P\xrightarrow{\beta}B$ with $\alpha\in \sC\subset\Cof_{\sW}$, $\beta\in \sW$ and then apply the \emph{2 out of 3} axiom to the diagram 
\[ \begin{matrix}\xymatrix{		E\amalg_A P\ar[d]^{\sW}\ar[r]^{\sW}&E\amalg_A B\ar[d]\\
D\amalg_A P\ar[r]^{\sW}&D\amalg_A B	}\end{matrix} \]
to obtain the statement.
\end{proof}

\begin{corollary}\label{cor.Gcofibration} In a left-proper model category a morphism $f\colon A\to B$ is a $\sW$-cofibration if and only if for every $A\to M\xrightarrow{g}N$ with $g\in \sW\cap \sF$, the pushout morphism $M\amalg_AB\xrightarrow{\quad}N\amalg_AB$ belongs to $\sW$.
\end{corollary}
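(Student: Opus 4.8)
The plan is to dispatch the two implications separately. The forward implication is immediate: if $f$ is a $\sW$-cofibration then, by Definition~\ref{def.Gcofibration} applied to $\sG=\sW$, the pushout $M\amalg_A B\to N\amalg_A B$ lies in $\sW$ for \emph{every} span $A\to M\xrightarrow{g}N$ with $g\in\sW$; since a trivial fibration is in particular a weak equivalence, the required condition holds a fortiori. So the content is entirely in the converse, and the natural tool is a factorization of an arbitrary weak equivalence $g$ into pieces whose pushouts I already control.

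First I would take an arbitrary $A\to M\xrightarrow{g}N$ with $g\in\sW$ and factor $g=p\circ j$ with $j\colon M\to P$ a cofibration and $p\colon P\to N$ a trivial fibration, using the (C--FW) factorization. Since $g,p\in\sW$, the \emph{2 out of 3} axiom forces $j\in\sW$, so $j$ is a trivial cofibration. Composing $A\to M$ with $j$ yields a map $A\to P$, and by the hypothesis of the corollary the pushout $P\amalg_A B\to N\amalg_A B$ of the trivial fibration $p$ is a weak equivalence.

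Next I would decompose the pushout map of $g$ itself. By the pasting law for coCartesian squares, $M\amalg_A B\to N\amalg_A B$ factors as
\[ M\amalg_A B\longrightarrow P\amalg_A B\longrightarrow N\amalg_A B, \]
where the first arrow is the pushout of $j$ along $M\to M\amalg_A B$ and the second is the pushout of $p$ considered above. Trivial cofibrations are stable under cobase change, so the first arrow is again a trivial cofibration, in particular a weak equivalence; the second is a weak equivalence by the previous step. The \emph{2 out of 3} axiom then gives that the composite $M\amalg_A B\to N\amalg_A B$ is a weak equivalence, proving that $f\in\Cof_{\sW}$.

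The only delicate point — and the step I would check most carefully — is the identification of $M\amalg_A B\to P\amalg_A B$ as a genuine pushout of $j$, which amounts to pasting the coCartesian square defining $M\amalg_A B$ with the one exhibiting $P\amalg_A B=P\amalg_M(M\amalg_A B)$; once this is clear, stability of trivial cofibrations under pushout finishes the argument. It is worth noting that left-properness enters here only as the standing hypothesis of the section: the proof itself uses just the factorization axiom, the \emph{2 out of 3} axiom, and the closure of trivial cofibrations under cobase change, so the equivalence in fact holds in any model category.
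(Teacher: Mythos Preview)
Your proof is correct and follows exactly the same route as the paper: factor the weak equivalence as a trivial cofibration followed by a trivial fibration, use that trivial cofibrations are preserved by pushout, and apply the hypothesis to the trivial fibration part. Your observation that left-properness plays no role in the argument is accurate and worth noting.
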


\begin{proof}
The proof follows from the fact that every weak equivalence is the composition of a trivial cofibration and a trivial fibration, and trivial cofibrations are preserved under pushouts.
\end{proof}

\begin{example}\label{ex.esempiononcofibrante} 
In the model category $\CDGA_{\K}$ of commutative differential graded 
$\K$-algebras consider the polynomial algebras: 
\[ A=\K[x],\qquad B=\K[x,y],\quad \bar{x}=+1,\quad \bar{y}=-1,\quad dy=yx\,.\]
Then $B$ is not cofibrant and the natural inclusion $i\colon A\to B$ is not a $\sW$-cofibration.

1) In order to prove that $B$ is not cofibrant consider the polynomial  algebra 
\[ D=\K[x,y,z],\qquad \bar{z}=0,\quad dy=z,\quad dz=0,\]
together with the surjective morphism 
\[ q\colon D\to B,\qquad q(x)=x,\; q(y)=y,\; q(z)=yx\,.\] 
It is immediate to see that $i$ is a weak equivalence and by K\"{u}nneth formula also the inclusion 
\[ gi\colon A\to D=\K[x]\otimes_{\K}\K[y,z]\]
is a weak equivalence. Hence $q$ is a trivial fibration and then if $B$ is cofibrant there exists a morphism $f\colon B\to D$ such that $qf=\id_B$. Any such $f$ should satisfy
\[ f(x)=xh,\quad f(y)=yk,\qquad h,k\in \K[z],\qquad h,k\not=0,\]
and this gives a contradiction since  
\[ df(y)=d(yh)=zh\not= f(dy)=f(yx)=yxhk\,.\]

2) Consider the retraction of polynomial algebras 
\[ A=\K[x]\xrightarrow{j}\K[x,t]\xrightarrow{q}\K[x],\qquad \bar{t}=0,\quad dt=xt,\quad j(x)=q(x)=x,\quad q(t)=0\,.\] 
Since $\K$ is assumed of characteristic $0$ both $j$ and $q$ are quasi-isomorphisms. 
In order to prove that $i\colon A\to B$ is not a $\sW$-cofibration we shall  prove that the pushoutout of $q$ under $i$ is not a weak equivalence:   in fact 
\[ \K[x,t]\otimes_AB=\K[x,t]\otimes_{\K[x]}\K[x,y]=\K[x,y,t],\qquad dt=xt,\; dy=yx,\]
and the element $yt$ gives a nontrivial cohomology class which is annihilated by the pushout of 
$q$:
\[  \K[x,t]\otimes_AB=\K[x,y,t]\xrightarrow{t\mapsto 0}\K[x]\otimes_{\K[x]}\K[x,y]=\K[x,y]\,.\]
\end{example}

\begin{example}\label{ex.bruttobrutto} The natural inclusion morphism 
\[ A=\frac{\K[\epsilon]}{(\epsilon^2)}\to B=A[x_0,x_1,x_2,\ldots],\qquad \bar{\epsilon}=0,\;\bar{x_i}=i,\quad dx_i=\epsilon x_{i+1},\]
in the category $\CDGA_{\K}$ is not a $\sW$-cofibration. To see this consider the (C-FW) factorization
\[ A\to C=A[u,v]\xrightarrow{\;\epsilon,u,v\mapsto 0\;}\K,\qquad \bar{u}=-1,\; \bar{v}=-2,\; du=\epsilon,\; dv=\epsilon u,\]
and it is easy to see that $C\otimes_A B\to \K\otimes_A B=\K[x_0,x_1,\ldots]$ is not a quasi-isomorphism (for instance $x_0$ does not lift to a cocycle in $C\otimes_A B$).
\end{example}

Every morphism $f\colon A\to B$ in $\mathbf{M}$ induces two functors:
\[ f^{\ast}=-\circ f\colon  \mathbf{M}_B\to \mathbf{M}_A,\qquad (B\to X)\mapsto (A\xrightarrow{f}B\to X),\]
\[ f_{\ast}=-\amalg_A B\colon \mathbf{M}_A\to \mathbf{M}_B,\qquad X\mapsto X\amalg_AB\,.\]

According to the definition of the model structure in the undercategories of $\bM$, a morphism $h$ in $\bM_B$ is a weak equivalence (respectively fibration, cofibration) if and only if $f^{\ast}(h)$ is a weak equivalence (respectively fibration, cofibration), see~\cite[p. 126]{Hir03}.

The functor $f_{\ast}$ preserves cofibrations and trivial cofibrations, and $f$ is a $\sW$-cofibration if and only if $f_{\ast}$ preserves weak equivalences. Given a pushout square 
\[\xymatrix{A\ar[d]^h\ar[r]^f\ar@{}[dr]|(.65){\Big\ulcorner}&B\ar[d]^k\\
C\ar[r]^-{g}&C\amalg_AB}\]
we have the base change formula
\begin{equation}\label{equ.projectionformula}
f_{\ast}h^{\ast}=k^{\ast}g_{\ast}\colon \bM_C\to \bM_B,
\end{equation}
which is equivalent to the canonical isomorphism $D\amalg_A B \cong D\amalg_C(C\amalg_A B)$ for every object $D$ in the category $\bM_C$.

\begin{definition}\label{def.flatmorphism}
A morphism $f\colon A\to B$ in $\bM$ is called \textbf{flat} if the functor $f_{\ast}$ preserves pullback diagrams of trivial fibrations. An object $B\in \mathbf{M}_A$ is called flat (over $A$) if the corresponding  morphism $A\to B$ is flat 
in $\bM$. 
\end{definition}

In a more explicit way, a morphism $A\xrightarrow{\;}B$ in a model category $\bM$ is flat if every commutative square 
\[ \xymatrix{	A\ar[d]\ar[r] & E\ar[d] \\
C\ar[r]^{\sF\sW} & D	} \]
gives a pullback square:  
\[ \xymatrix{	(C\times_DE)\amalg_AB\ar[d]\ar[r] & E\amalg_AB\ar[d] \\
C\amalg_AB\ar[r]^{\sF\sW}&D\amalg_AB		} \]
or, equivalently, if $C{\amalg}_AB\xrightarrow{\;}D{\amalg}_AB$ is a trivial fibration and  the natural map 
\[  (C\times_DE)\amalg_AB\to (C\amalg_AB)\times_{D\amalg_AB}(E\amalg_AB) \]
is an isomorphism.

The notion of flatness is preserved under the passage to undercategories and overcategories. 
In particular, given two maps $K\to A\xrightarrow{f}B$ in $M$, the morphism $f$ is flat in $\bM$ if and only if 
it is flat in $\bM_K$.

The above notion of flatness  is motivated by the example of commutative differential graded algebras: we shall prove in the next section that a morphism 
$A\to B$ in $\DGCA_{\K}^{\le 0}$, with $A=A^0$ concentrated in degree 0, is flat in the sense of
Definition~\ref{def.flatmorphism} if and only it is DG-flat in the sense of \cite{A-F}.

\begin{remark}
Altough the above notion of flatness also makes sense in categories of fibrant objects 
it seems that its utility is restricted to the realm of left-proper model categories. 
It is important to point out that flatness 
is not invariant under weak equivalences, and then it does not make sense to talk about flat morphisms in the homotopy category.
\end{remark}

\begin{lemma}\label{lem.flatWcof}
Every flat morphism is a $\sW$-cofibration. 
\begin{proof} Assume $A\to B$ flat, given $A\to M\xrightarrow{\sW} N$,  consider a factorization 
$A\to M\xrightarrow{\sC\sW}P\xrightarrow{\sF\sW}M$. Then 
\[M\amalg_AB\xrightarrow{\sC\sW}P\amalg_AB=P\amalg_M(M\amalg_A B)\]
is a trivial cofibration by model category axioms, while 
\[P\amalg_AB\xrightarrow{\sF\sW} N\amalg_AB\]
is a trivial fibration by flatness.
\end{proof}
\end{lemma}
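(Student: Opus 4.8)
The plan is to prove the statement by showing directly that the pushout functor $f_{\ast}=-\amalg_A B$ carries weak equivalences to weak equivalences, since by definition this is exactly the assertion that $f\colon A\to B$ is a $\sW$-cofibration. The natural tool is the factorization axiom of a model category: an arbitrary weak equivalence splits as a trivial cofibration followed by a trivial fibration, and I would treat the two factors by completely different means, using the model axioms for the first and the flatness hypothesis for the second.

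First I would fix a flat morphism $A\to B$ and an arbitrary weak equivalence $A\to M\xrightarrow{\sW} N$ in the undercategory $\bM_A$, and factor the map $M\to N$ as $M\xrightarrow{\sC\sW} P\xrightarrow{\sF\sW} N$. Since $f_{\ast}$ is a functor, the pushout map $M\amalg_A B\to N\amalg_A B$ is the composite of $f_{\ast}$ applied to $M\xrightarrow{\sC\sW}P$ and to $P\xrightarrow{\sF\sW}N$, so it suffices to show that each factor becomes a weak equivalence; the \emph{2 out of 3} axiom then delivers the conclusion for the composite.

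The trivial-cofibration half should be purely formal, and here flatness plays no role. Using the base change formula \eqref{equ.projectionformula}, equivalently the canonical isomorphism $P\amalg_A B\cong P\amalg_M(M\amalg_A B)$, I would identify $M\amalg_A B\to P\amalg_A B$ with the pushout of the trivial cofibration $M\xrightarrow{\sC\sW}P$ along the map $M\to M\amalg_A B$. Because trivial cofibrations are stable under pushout, this map is again a trivial cofibration, hence a weak equivalence.

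The trivial-fibration half is where the flatness hypothesis must enter, and it is the only genuine content of the lemma. Applying Definition~\ref{def.flatmorphism} to the trivial fibration $P\xrightarrow{\sF\sW}N$ under $A$ should yield that the pushout $P\amalg_A B\to N\amalg_A B$ is itself a trivial fibration, in particular a weak equivalence. The one point I would check carefully is that the flatness condition, phrased in terms of pullback squares of trivial fibrations, does apply to a single trivial fibration $P\to N$; this is immediate from the equivalent reformulation of flatness, which already asserts that $C\amalg_A B\to D\amalg_A B$ is a trivial fibration whenever $C\xrightarrow{\sF\sW} D$ is. Combining the two halves completes the argument. (One could also bypass the factorization altogether by invoking Corollary~\ref{cor.Gcofibration}, which reduces being a $\sW$-cofibration to sending trivial fibrations to weak equivalences, a property flatness supplies outright.)
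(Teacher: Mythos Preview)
Your main argument is correct and essentially identical to the paper's proof: both factor the given weak equivalence as a trivial cofibration followed by a trivial fibration, handle the first factor by the model-category axiom that trivial cofibrations are stable under pushout (using the identification $P\amalg_A B\cong P\amalg_M(M\amalg_A B)$), and handle the second factor via the flatness hypothesis, which guarantees that the pushout of a trivial fibration is again a trivial fibration.

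One small caveat about your parenthetical alternative: Corollary~\ref{cor.Gcofibration} is stated under the hypothesis that the ambient model category is left-proper, whereas Lemma~\ref{lem.flatWcof} makes no such assumption. So you cannot invoke that corollary here as a genuine shortcut without first assuming left-properness (which would be circular in spirit, since the lemma is part of what makes flat morphisms useful \emph{before} any properness is imposed).
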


\begin{lemma}\label{lem.flatclosedunderretracts}
The class of flat morphisms is stable under composition, pushouts and retractions.
\begin{proof}
Composition: let $A\xrightarrow{f}B\xrightarrow{g}C$ be two flat morphisms, then both 
the functors $f_{\ast}\colon \mathbf{M}_A\to \mathbf{M}_B$ and  
$g_{\ast}\colon \mathbf{M}_B\to \mathbf{M}_C$ preserve pullback diagrams of trivial fibrations. Therefore also 
$(gf)_{\ast}=g_{\ast}f_{\ast}$ preserves pullback diagrams of trivial fibrations.

Pushout: let $A\xrightarrow{f}B$, $A\xrightarrow{\;}C$ be two  morphisms with $f$ flat. Then it follows from the base change Formula~\eqref{equ.projectionformula} that 
$g\colon C\to C\amalg_AB$ is also flat.

Retracts: let $\bC$ be any category, and denote by $\bC^{\Delta^1\times\Delta^1}$ the category of commutative squares in 
$\bC$. It is easy and completely straightforward to see that every retract of a pullback (respectively, pushout) square in 
$\bC^{\Delta^1\times\Delta^1}$ is a pullback (respectively, pushout) square.  
Consider now a retraction \[ \xymatrix{A\ar[r]\ar[d]^f&C\ar[r]^{p}\ar[d]^g&A\ar[d]^f\\
B\ar[r]&D\ar[r]^q&B}\]
in  $\bM$, with $g$ a flat morphism. By the universal property of coproduct, every map $A\to X$ gives a canonical retraction 
\[ X\amalg_AB\to X\amalg_CD\to X\amalg_AB\,.\]
Therefore, every commutative square $\xi\in \bM_A^{\Delta^1\times\Delta^1}$ gives a retraction 
$\xi\amalg_AB\to \xi\amalg_CD\to \xi\amalg_AB$ in the category
$\bM^{\Delta^1\times\Delta^1}$. If $\xi$ is the pullback square of a trivial fibration, then also 
$\xi\amalg_CD$ is the pullback of a trivial fibration. Since trivial fibrations and pullback squares are stable under retracts, it follows that also $\xi\amalg_AB$ is the pullback square of a trivial fibration.
\end{proof}
\end{lemma}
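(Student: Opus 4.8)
The plan is to verify the three closure properties separately, using throughout the explicit form of flatness: $f\colon A\to B$ is flat exactly when $f_{\ast}=-\amalg_AB\colon \bM_A\to\bM_B$ carries every pullback square one of whose legs is a trivial fibration to a pullback square whose corresponding leg is again a trivial fibration. I will call such a square a \emph{good square}. Composition and pushout stability are essentially formal; the retract case is the only one requiring real work.

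For composition, given flat maps $A\xrightarrow{f}B\xrightarrow{g}C$, I would first record the identity $(gf)_{\ast}=g_{\ast}f_{\ast}$, which follows from the pasting law for pushouts $(X\amalg_AB)\amalg_BC\cong X\amalg_AC$. If $\xi$ is a good square in $\bM_A$, flatness of $f$ makes $f_{\ast}\xi$ a good square in $\bM_B$; flatness of $g$ then makes $g_{\ast}f_{\ast}\xi=(gf)_{\ast}\xi$ a good square in $\bM_C$, so $gf$ is flat.

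For pushouts, let $f\colon A\to B$ be flat and $h\colon A\to C$ arbitrary, with pushout corner $g\colon C\to C\amalg_AB$ and $k\colon B\to C\amalg_AB$. The key tool is the base change formula \eqref{equ.projectionformula}, $f_{\ast}h^{\ast}=k^{\ast}g_{\ast}$. Given a good square $\sigma$ in $\bM_C$, I would apply the restriction functor $k^{\ast}\colon \bM_{C\amalg_AB}\to\bM_B$ and rewrite $k^{\ast}g_{\ast}\sigma=f_{\ast}h^{\ast}\sigma$. Since restriction preserves pullbacks and both preserves and reflects weak equivalences and fibrations, $h^{\ast}\sigma$ is a good square in $\bM_A$, so flatness of $f$ makes $f_{\ast}h^{\ast}\sigma=k^{\ast}g_{\ast}\sigma$ good; because $k^{\ast}$ reflects pullback squares and trivial fibrations, $g_{\ast}\sigma$ itself is good, i.e.\ $g$ is flat.

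The retract case is the main obstacle and deserves the most care. Starting from a retraction exhibiting $f$ as a retract of a flat $g$, with retraction maps $p\colon C\to A$, $q\colon D\to B$ and the two commuting squares of the diagram, I would first note that any $A\to X$ inherits a $C$-structure via $C\xrightarrow{p}A\to X$, i.e.\ equals $p^{\ast}$ applied to $X$. The two commuting squares, together with the relations on the horizontal composites, then assemble $\id_X$, the top row $A\to C\xrightarrow{p}A$ and the bottom row $B\to D\xrightarrow{q}B$ into a retraction of spans from $(X\leftarrow A\to B)$ to $(X\leftarrow C\to D)$ and back; pushing out gives a functorial retraction $X\amalg_AB\to X\amalg_CD\to X\amalg_AB$, which performed entrywise promotes to a retraction of squares $\xi\amalg_AB\to\xi\amalg_CD\to\xi\amalg_AB$ in $\bM^{\Delta^1\times\Delta^1}$. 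The decisive identification is $\xi\amalg_CD=g_{\ast}(p^{\ast}\xi)$: if $\xi$ is good in $\bM_A$ then $p^{\ast}\xi$ is good in $\bM_C$, and flatness of $g$ makes $\xi\amalg_CD$ good. I would conclude by invoking two elementary stability facts, namely that retracts of pullback squares are pullback squares in $\bM^{\Delta^1\times\Delta^1}$ and that trivial fibrations are closed under retracts in any model category, to deduce that the retract $\xi\amalg_AB=f_{\ast}\xi$ is itself good, which is precisely flatness of $f$. The places where I expect to have to be careful are the bookkeeping of the two span maps, which uses both commuting squares and the relations $p\,(A\to C)=\id_A$, $q\,(B\to D)=\id_B$, and the clean identification $\xi\amalg_CD=g_{\ast}p^{\ast}\xi$, after which the argument reduces entirely to the stability of good squares under retracts.
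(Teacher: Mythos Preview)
Your proposal is correct and follows essentially the same approach as the paper: composition via $(gf)_\ast=g_\ast f_\ast$, pushout stability via the base change formula \eqref{equ.projectionformula}, and the retract case by exhibiting $\xi\amalg_AB$ as a retract of $\xi\amalg_CD=g_\ast(p^\ast\xi)$ in $\bM^{\Delta^1\times\Delta^1}$ and then invoking stability of pullback squares and trivial fibrations under retracts. Your treatment of the pushout case is more explicit than the paper's one-line appeal to base change, but the underlying argument is identical.
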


\subsection{The coFrobenius condition}

For the application we have in mind, here and in the forthcoming papers, it is useful to introduce the following definitions.  

\begin{definition}\label{def.SLP} 
We shall say that a model category is \textbf{strong left-proper} if every cofibration is flat.
\end{definition}

\begin{definition}\label{def.frobenius}
A model category satisfies the \textbf{coFrobenius condition} if pushouts along cofibrations preserve trivial fibrations.
\end{definition}

\begin{remark}
1) Every strong left-proper model category satisfies the coFrobenius condition. The converse holds under mild assumptions, see Proposition~\ref{prop.frobenius}.

2) Every model category satisfying the coFrobenius condition is left-proper. The proof is essentially the same as the one of Lemma~\ref{lem.flatWcof}.
The converse is false in general, see Example~\ref{example.Top}.
\end{remark}

The name ``coFrobenius condition" of Definition~\ref{def.frobenius} is due to its dual property, the Frobenius condition, which has been already considered in the literature, \cite{GS17}.

\begin{example}\label{example.Top}
Denote by $\Top$ the category of topological spaces endowed with the standard model structure, \cite{QuillenHA}. It is well-known that $\Top$ is left-proper (see e.g. \cite[Theorem 13.1.10]{Hir03}) but it does not satisfy the coFrobenius condition, in particular it is not strong left-proper.
In order to prove the claim consider the cofibration $\iota\colon 0\to [0,1]$ defined as the natural inclusion, together with the trivial fibration $\pi\colon [0,1]\to 0$. The pushout map $[0,1]\amalg_0[0,1]\to [0,1]$ of $\pi$ along $\iota$ is not a Serre fibration.

Similarly one can prove that the category $\sSet$ of simplicial sets endowed with the Quillen's model structure, \cite{QuillenHA}, does not satisfy the coFrobenius condition, while left-properness immediately follows recalling that all objects are cofibrant.
%In fact, the commutative square
%\[ \xymatrix{	[0,1] \ar@{->}[r]^-{(0,\id)} \ar@{->}[d]_{(0,\id)} & [0,1]\amalg_0[0,1] \ar@{->}[d]^{(\id, 0)} \\
%[0,1]\times [0,1] \ar@{->}[r]_-{\id\times\, 0} & [0,1]	} \]
%does not admit any lifting $[0,1]\times[0,1]\to [0,1]\amalg_0[0,1]$.
\end{example}

\begin{proposition}\label{prop.frobenius}
Let $\bM$ be a cofibrantly generated model category where every generating cofibration is flat. Then $\bM$ is strong left-proper if and only if $\bM$ satisfies the coFrobenius condition.
\end{proposition}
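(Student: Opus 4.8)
The plan is to establish the nontrivial implication \textbf{coFrobenius $\Rightarrow$ strong left-proper}; the reverse is immediate from the explicit description of flatness, since if a cofibration $f\colon A\to B$ is flat then applying $f_{\ast}=-\amalg_A B$ to the pullback square with bottom edge a trivial fibration $A\xrightarrow{\sF\sW}Y$ and right edge $\id_Y$ shows that $B\to Y\amalg_A B$ is a trivial fibration, which is exactly an instance of the coFrobenius condition. So assume from now on that $\bM$ satisfies coFrobenius and that every generating cofibration is flat, and let me prove that every cofibration is flat. Because $\bM$ is cofibrantly generated, every cofibration is a retract of a transfinite composition of pushouts of generating cofibrations; refining each coproduct of cells into one-cell-at-a-time attachments, I may assume that each successor map of the tower is a pushout of a single generating cofibration, hence a cofibration and, by the pushout-stability of Lemma~\ref{lem.flatclosedunderretracts}, flat. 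Since flatness is stable under retracts (again Lemma~\ref{lem.flatclosedunderretracts}), everything reduces to proving that flatness is stable under transfinite composition of cofibrations.

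First I would dispose of the trivial-fibration half of the flatness condition, which in fact holds for every cofibration $f\colon A\to B$ whatsoever. Given a trivial fibration $C\xrightarrow{\sF\sW}D$ under $A$, the base change Formula~\eqref{equ.projectionformula} supplies an isomorphism $D\amalg_A B\cong D\amalg_C(C\amalg_A B)$ identifying $C\amalg_A B\to D\amalg_A B$ with the pushout of $C\xrightarrow{\sF\sW}D$ along $C\to C\amalg_A B$; the latter is a pushout of $f$, hence a cofibration, so coFrobenius forces $C\amalg_A B\to D\amalg_A B$ to be a trivial fibration. Thus the only thing left to propagate along the tower is the base change \emph{isomorphism}. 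I would do this by transfinite induction on the length $\lambda$ of a tower $A=A_0\to\cdots\to A_\lambda=B$ of flat cofibrations, with inductive hypothesis ``$A\to A_\mu$ is flat for every $\mu<\lambda$''. The successor case is just the composition-stability of Lemma~\ref{lem.flatclosedunderretracts}, so the whole statement collapses to the limit case.

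At a limit ordinal, writing $X_\mu=X\amalg_A A_\mu$ for an object $X$ under $A$, the fact that $X\amalg_A(-)$ is a left adjoint gives $X_\lambda=\colim_{\mu<\lambda}X_\mu$. Fixing a pullback square of a trivial fibration with bottom $C\xrightarrow{\sF\sW}D$ and pullback $G=C\times_D E$, the inductive hypothesis yields isomorphisms $G_\mu\cong C_\mu\times_{D_\mu}E_\mu$ for every $\mu<\lambda$, and the previous paragraph (applied to the cofibration $A\to A_\lambda$) already gives that $C_\lambda\to D_\lambda$ is a trivial fibration. What remains, and what I expect to be the crux of the whole argument, is to prove that the canonical comparison
\[ \colim_{\mu<\lambda}\left(C_\mu\times_{D_\mu}E_\mu\right)\longrightarrow C_\lambda\times_{D_\lambda}E_\lambda \]
whose source is $G_\lambda$ and whose target uses $C_\lambda=\colim_\mu C_\mu$, $D_\lambda=\colim_\mu D_\mu$, $E_\lambda=\colim_\mu E_\mu$, is an \emph{isomorphism}; equivalently, that the transfinite colimit along the cofibration towers commutes with this pullback. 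This is a strictly point-set assertion rather than a homotopical one, so the model axioms do not settle it by themselves: coFrobenius applied to the trivial fibration $G\xrightarrow{\sF\sW}E$ shows that both $G_\lambda\to E_\lambda$ and $C_\lambda\times_{D_\lambda}E_\lambda\to E_\lambda$ are trivial fibrations, whence the \emph{2-out-of-3} axiom only tells me that the comparison is a weak equivalence.

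The way I would finish is to upgrade this weak equivalence to an isomorphism by exploiting that every transition map in the three towers is a cofibration, which reduces the claim to the commutation of filtered colimits with finite limits along such towers; this is the main obstacle and the only genuinely delicate point. In the motivating categories $\DGCA_{\K}$ and $\DGCA_{\K}^{\le 0}$ the commutation holds for a transparent reason, namely that filtered colimits and pullbacks are both computed on underlying complexes, where filtered colimits are exact. Granting it in the generality needed, the comparison map is an isomorphism, $A\to A_\lambda$ is flat, the induction closes, and therefore every cofibration is flat, i.e. $\bM$ is strong left-proper.
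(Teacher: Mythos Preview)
Your argument is essentially the paper's: reduce via retracts and pushouts to a transfinite composition of flat cofibrations, invoke coFrobenius for the trivial-fibration half, and use commutation of filtered colimits with finite limits for the base-change isomorphism. The paper dispenses with the transfinite induction and simply asserts at the final step that ``filtered colimits commute with finite limits''---exactly the point you flag as the main obstacle and grant rather than prove; no further justification is offered there either.
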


\begin{proof}
If $\bM$ is strong left-proper then it satisfies the coFrobenius condition. Conversely, let $I$ be the set of generating cofibrations of $\bM$; by hypothesis every map of $I$ is flat. Recall that every cofibration in $\bM$ is a retract of a transfinite composition of pushouts of maps in $I$,~\cite[Prop. 2.1.18]{Hov99}. Therefore it is sufficient to show that given an ordinal $\underline{\lambda}$ together with a $\underline{\lambda}$-sequence
\[ A_0 \xrightarrow{f_0} A_1\xrightarrow{f_1} \cdots \to A_{\lambda} \xrightarrow{f_{\lambda}} \cdots \]
in $\bM$ where each $f_{\lambda}$ is a flat cofibration for $\lambda<\underline{\lambda}$, then the transfinite composition $f_{\underline{\lambda}}\colon A_0\to \colim\limits_{\lambda<\underline{\lambda}} A_{\lambda}$ is flat. For simplicity of notation we shall denote $A_{\underline{\lambda}}=\colim A_{\lambda}$.

Consider a commutative square
\[ \xymatrix{	A_0\ar@{->}[r]\ar@{->}[d] & C\ar@{->}[d]^{\sF\sW} \\
D\ar@{->}[r] & E 	} \]
with $C\to E$ a trivial fibration. Recall that filtered colimits commute with finite limits, so that we have the following chain of isomorphisms
\[ \begin{aligned}
(D\times_EC)\amalg_{A_0}A_{\underline{\lambda}} &\cong \colim\left( (D\times_EC)\amalg_{A_0}A_{\lambda}\right) \\
& \cong \colim\left( (D\amalg_{A_0}A_{\lambda})\times_{(E\amalg_{A_0}A_{\lambda})}(C\amalg_{A_0}A_{\lambda})  \right) \\
& \cong \colim(D\amalg_{A_0}A_{\lambda})\times_{\colim(E\amalg_{A_0}A_{\lambda})}\colim(C\amalg_{A_0}A_{\lambda}) \\
& \cong (D\amalg_{A_0}A_{\underline{\lambda}})\times_{(E\amalg_{A_0}A_{\underline{\lambda}})}(C\amalg_{A_0}A_{\underline{\lambda}})
\end{aligned} \]
where the second isomorphism follows from the flatness of the maps $A_0\to A_{\lambda}$, $\lambda<\underline{\lambda}$.

We are left with the proof that the morphism $C\amalg_{A_0}A_{\infty}\to E\amalg_{A_0}A_{\underline{\lambda}}$ is a trivial fibration; 
this follows by the coFrobenius condition.
\end{proof}

\begin{example}
By Proposition~\ref{prop.frobenius} it follows that the model category $\CDGA_{\K}^{\le 0}$ is strong left-proper, since 
it satisfies the coFrobenius condition and generating cofibrations are flat.
We shall reprove this fact in Corollary~\ref{cor.cofibrationflat}.
The same argument works also, mutatis mutandis, for proving that $\DGCA_{\K}$ is strong left-proper.

The model category $\mathbf{sAlg}_R$ of simplicial commutative algebras over a commutative ring $R$ (endowed with the model structure defined in \cite[Sec. 4.3]{GS})  is strong left-proper. In fact this category is left-proper, \cite[Lemma 3.1.1]{schwede}, and every cofibration is a retract of a free morphism, \cite[Prop. 4.21]{GS}. The conclusion is now an immediate consequence of the fact that the pushout of commutative simplicial rings is given by degreewise tensor product. 
\end{example}

For future purposes we now prove the following useful result.

\begin{lemma}\label{lem.pushoutFW} Let $\bM$ be a model category satisfying the coFrobenius condition. Assume moreover that for every pair of morphisms $A\to B\to C$, if $A\to C$ is a fibration and $A\to B$ is a trivial fibration,  then  
$B\to C$ is a fibration. 
 
Then trivial fibrations between flat objects are preserved by pushouts.
\end{lemma}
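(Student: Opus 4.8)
The plan is to break the pushout along the arbitrary map $A\to D$ into two steps, each controlled by one of the two hypotheses: a pushout along a cofibration, governed by the coFrobenius condition, followed by a pushout along a trivial fibration, governed by flatness. Concretely, let $A\to B$ and $A\to C$ be flat and let $f\colon B\to C$ be a trivial fibration in $\bM_A$; I must show that for an arbitrary $A\to D$ the induced map $g\colon B\amalg_A D\to C\amalg_A D$ is again a trivial fibration. First I would factor $A\to D$ as $A\xrightarrow{j}P\xrightarrow{q}D$ with $j\in\sC$ and $q\in\sF\sW$, and set $B'=B\amalg_A P$ and $C'=C\amalg_A P$.

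The first step is immediate: since $j$ is a cofibration and $f$ is a trivial fibration, the coFrobenius condition makes $f'=f\amalg_A\id_P\colon B'\to C'$ a trivial fibration. The second step is where flatness enters. By Lemma~\ref{lem.flatclosedunderretracts} flatness is stable under pushout, so $P\to B'$ and $P\to C'$ are flat; and the base change isomorphism following \eqref{equ.projectionformula} gives $B'\amalg_P D\cong B\amalg_A D$ and $C'\amalg_P D\cong C\amalg_A D$. Viewing $q\colon P\to D$ as a trivial fibration in $\bM_P$ (from the object $\id_P$ to the object $q$) and applying the flat functors $(-)\amalg_P B'$ and $(-)\amalg_P C'$, the very definition of flatness forces both horizontal maps
\[ B'\longrightarrow B\amalg_A D,\qquad C'\longrightarrow C\amalg_A D \]
to be trivial fibrations.

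These maps fit, by bifunctoriality of the pushout (applied to $f\colon B\to C$ and $q\colon P\to D$), into a commutative square whose vertical edges are $f'\colon B'\to C'$ and $g\colon B\amalg_A D\to C\amalg_A D$; three of its four edges are now known to be trivial fibrations. Consequently the diagonal composite $B'\to C'\to C\amalg_A D$, equivalently $B'\to B\amalg_A D\xrightarrow{g}C\amalg_A D$, is a trivial fibration, in particular a fibration. Finally I would invoke the extra hypothesis: in the factorization $B'\to B\amalg_A D\xrightarrow{g}C\amalg_A D$ the composite is a fibration and the first map $B'\to B\amalg_A D$ is a trivial fibration, so $g$ is a fibration; and since both the composite and the first map are weak equivalences, the \emph{2 out of 3} axiom makes $g$ a weak equivalence. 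Thus $g$ is a trivial fibration, as required.

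The hard part will be the second step: recognizing that, once the coFrobenius step is done, the remaining pushout along the trivial fibration $q$ is exactly what flatness of $B'$ and $C'$ controls, producing the two horizontal trivial fibrations. This is the crux, because the coFrobenius condition by itself governs only the pushout along the cofibration $j$ and gives no grip on the pushout along $q$; it is precisely the horizontal trivial fibrations manufactured by flatness that allow the cancellation hypothesis to upgrade the weak equivalence $g$ into a fibration.
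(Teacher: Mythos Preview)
Your proof is correct and follows essentially the same strategy as the paper: factor the arbitrary map as a cofibration followed by a trivial fibration, use coFrobenius for the first step and flatness for the second, then read off the conclusion from the resulting square with three known trivial-fibration edges via the cancellation hypothesis and \emph{2 out of 3}. The only cosmetic difference is that the paper applies flatness of the original maps $A\to E$, $A\to D$ directly to the trivial fibration $P\to B$, whereas you first push flatness forward to $P$ via Lemma~\ref{lem.flatclosedunderretracts} and then apply it to $q\colon P\to D$; by base change these are the same computation.
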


\begin{proof} Given a diagram
\[ \xymatrix{	A\ar[dr]_{\flat}\ar[r]^{\flat} & E\ar[d]^{\sF\sW} \\
 & D	} \]
together with a morphism $A\to B$, consider a factorization $A\xrightarrow{\sC}P\xrightarrow{\sF\sW}B$. By the coFrobenius condition the morphism $E\amalg_AP\to D\amalg_AP$ is a trivial fibration. Moreover, since $A\to E$ and $A\to D$ are flat the morphisms
\[ E\amalg_AP\xrightarrow{\sF\sW}E\amalg_AB,\qquad D\amalg_AP\xrightarrow{\sF\sW}D\amalg_AB \]
are trivial fibrations, so that the commutative diagram 
\[ \begin{matrix}\xymatrix{		E\amalg_AP\ar[r]^{\sF\sW}\ar[d]^{\sF\sW}&E\amalg_AB\ar[d]\\
 D\amalg_AP\ar[r]^{\sF\sW}&D\amalg_AB		}\end{matrix}\]
gives the statement.
\end{proof}

\bigskip
\section{Flatness in $\DGCA_{\K}^{\le 0}$}
\label{sec.flatness}

Unless otherwise stated we shall consider the category $\DGCA_{\K}^{\le 0}$ equipped with the 
projective model structure. Recall that a morphism $A\to B$ in 
$\DGCA_{\K}^{\le 0}$ is a semifree extension if $B=A[\{x_i\}]$ is a polynomial extension 
in an arbitrary number of variables of non-positive degree.

For every differential graded commutative algebra $A$ we shall denote by 
$\DGMod(A)$ (resp.: $\DGMod(A)^{\le 0}$) the category of differential graded modules over $A$ (resp.: concentrated in non-positive degrees). 
For every module 
$M\in \DGMod(A)$ we shall denote by $A\oplus M$ the trivial extension.

For every $A\in \DGCA_{\K}^{\le 0}$ we shall denote by $A\to A[d^{-1}]$ the semifree extension, where $d^{-1}$ has degree $-1$ and $dd^{-1}=1$. For every $A$-module $M$ the tensor product $A[d^{-1}]\otimes_AM$ is isomorphic to the mapping cone $M[1]\oplus M$ of the identity and therefore it is an acyclic $A$-module. For every morphism 
$A\to B$ of algebras we have $A[d^{-1}]\otimes_A B=B[d^{-1}]$ and then for every $A$-module $M$ 
we have 
\[ (A[d^{-1}]\otimes_AM)\otimes_AB\simeq B[d^{-1}]\otimes_B(M\otimes_AB)\,,\]
i.e., mapping cone commutes with tensor products. Finally, the same proof as in the classical case shows that the functor $-\otimes_AB\colon \DGMod(A)^{\le 0}\to \DGMod(B)^{\le 0}$ is right exact, i.e., preserves the class of exact sequences of 
type $M\to N\to P\to 0$.

\begin{lemma}\label{lem.wcofincdga} A morphism $f\colon A\to B$  in $\DGCA_{\K}^{\le 0}$ is a $\sW$-cofibration if and only if 
the graded tensor product $-\otimes_AB\colon \DGMod(A)^{\le 0}\to \DGMod(B)^{\le 0}$ preserves the class of acyclic modules.
\end{lemma}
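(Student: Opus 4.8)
The plan is to translate the homotopical notion of $\sW$-cofibration into the algebraic statement about the tensor functor by means of two elementary but opposite constructions: the trivial (square-zero) extension for the ``only if'' implication, and the mapping cone for the ``if'' implication. Throughout I use that the pushout in $\DGCA_{\K}^{\le 0}$ of a cospan $M\leftarrow A\to B$ of algebras is the algebra tensor product $M\otimes_A B$, whose underlying complex is the tensor product of the underlying DG-modules; hence the pushout map $M\amalg_A B\to N\amalg_A B$ induced by $g\colon M\to N$ is, at the level of complexes, the map $g\otimes_A\id_B$, and it is a weak equivalence precisely when it is a quasi-isomorphism.

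For the implication ``$\sW$-cofibration $\Rightarrow$ tensor preserves acyclics'', I would take an arbitrary acyclic module $M\in\DGMod(A)^{\le 0}$ and form the trivial extension $A\oplus M$, which is an object of $\bM_A=A\downarrow\DGCA_{\K}^{\le 0}$. Since $M$ is acyclic the projection $A\oplus M\to A$ is a quasi-isomorphism, hence a weak equivalence in $\bM_A$. Applying the assumption that $f$ is a $\sW$-cofibration to this weak equivalence, the pushout map $(A\oplus M)\otimes_A B\to B$ is again a weak equivalence. Identifying $(A\oplus M)\otimes_A B$ with the trivial extension $B\oplus(M\otimes_A B)$, the projection to $B$ is a quasi-isomorphism if and only if $M\otimes_A B$ is acyclic, which gives the conclusion.

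For the converse ``tensor preserves acyclics $\Rightarrow$ $\sW$-cofibration'', I would verify Definition~\ref{def.Gcofibration} directly. Given any $A\to M\xrightarrow{g}N$ with $g$ a weak equivalence (i.e. a quasi-isomorphism), consider the mapping cone $\cone(g)$. A short degree count, $\cone(g)^n=M^{n+1}\oplus N^n$, shows that $\cone(g)$ again lies in $\DGMod(A)^{\le 0}$ because $M$ and $N$ are non-positively graded; and $g$ being a quasi-isomorphism is equivalent to $\cone(g)$ being acyclic. By hypothesis $\cone(g)\otimes_A B$ is then acyclic, and since the mapping cone commutes with $-\otimes_A B$ (by the same elementary computation recalled before the lemma, now applied to an arbitrary chain map) we have $\cone(g)\otimes_A B\cong\cone(g\otimes_A\id_B)$. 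Hence the pushout map $M\amalg_A B\to N\amalg_A B$ has acyclic cone, so it is a quasi-isomorphism and therefore a weak equivalence; this is exactly the defining property of a $\sW$-cofibration.

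The two constructions are routine, so the only point demanding care---and the main potential obstacle---is checking that the auxiliary modules produced stay inside $\DGMod(A)^{\le 0}$, since the hypothesis only controls $-\otimes_A B$ on non-positively graded modules. For the trivial extension this is immediate, while for the mapping cone it relies on the shift being by $+1$ (so that the summand $M[1]$ sits in degrees $\le -1$), which keeps $\cone(g)$ in non-positive degrees; one must also be careful to invoke the cone--tensor compatibility in its general form, and not merely for the cone of the identity as stated in the preliminary discussion.
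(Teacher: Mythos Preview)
Your argument is correct and follows essentially the same approach as the paper's proof: trivial extensions for the ``only if'' direction and mapping cones for the ``if'' direction. The paper uses the inclusion $A\to A\oplus M$ rather than the projection you chose, but this is immaterial; your added care in verifying that $\cone(g)$ remains in non-positive degrees is a useful detail the paper leaves implicit.
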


\begin{proof} The ``only if'' pat is clear  since for every acyclic $A$-module $M$ the natural 
inclusion $A\to A\oplus M$ is a weak equivalence. The ``if'' part is a consequence of the fact that 
the tensor product commutes with mapping cones and the well known fact that a morphism of $A$-modules is a weak equivalence if and only if its mapping cone is acyclic.   
\end{proof}

\begin{theorem}\label{thm.2flatnessCDGA}
Let $f\colon A\to B$ be a morphism in $\DGCA_{\K}^{\le 0}$. The following conditions are equivalent:
\begin{enumerate}
\item  the graded tensor product $-\otimes_AB\colon \DGCA_A^{\le 0}\to \DGCA_B^{\le 0}$ preserves pullback squares of trivial fibrations, i.e., $f$ is flat in the sense of  Definition~\ref{def.flatmorphism};

\item the graded tensor product $-\otimes_AB\colon \DGCA_A^{\le 0}\to \DGCA_B^{\le 0}$ preserves the classes of injections and trivial fibrations;
 
\item the graded tensor product $-\otimes_AB\colon \DGMod(A)^{\le 0}\to \DGMod(B)^{\le 0}$ preserves the class of 
quasi-isomorphisms  and for every short exact sequence 
$0\to M\to N\to P\to 0$ of differential graded $A$-modules, the 
sequence 
\[ 0\to M\otimes_AB\to N\otimes_AB\to P\otimes_AB\to 0\]
is exact.
\end{enumerate}
\end{theorem}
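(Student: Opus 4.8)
My plan is to prove the three equivalences cyclically, in the order $(1)\Rightarrow(2)\Rightarrow(3)\Rightarrow(1)$, after recording two translations that I will use throughout. First, a morphism in $\DGCA_{\K}^{\le 0}$ is a trivial fibration precisely when it is a surjective quasi-isomorphism, and $-\otimes_AB$ is right exact on modules; consequently the exactness clause of (3) is equivalent to the graded flatness of $B$ over $A$, i.e. to $-\otimes_AB$ preserving injections of modules. Second, by Lemma~\ref{lem.wcofincdga} the quasi-isomorphism clause of (3) (equivalently, preservation of acyclic modules, via the mapping-cone identity) is exactly the assertion that $f$ is a $\sW$-cofibration.

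\emph{The routine directions.} For $(3)\Rightarrow(1)$, given a square as in Definition~\ref{def.flatmorphism} with $C\xrightarrow{\sF\sW}D$, surjectivity of $C\to D$ makes $(c,e)\mapsto g(c)-\phi(e)$ a surjection $C\oplus E\to D$, so the pullback fits in a short exact sequence of $A$-modules
\[ 0\to C\times_DE\to C\oplus E\to D\to 0. \]
Applying $-\otimes_AB$ and using the exactness clause of (3) gives
\[ (C\times_DE)\otimes_AB\cong\ker\big((C\otimes_AB)\oplus(E\otimes_AB)\to D\otimes_AB\big)=(C\otimes_AB)\times_{D\otimes_AB}(E\otimes_AB), \]
while $C\otimes_AB\to D\otimes_AB$ is again surjective (right exactness) and a quasi-isomorphism (first clause of (3)), hence a trivial fibration; thus $f$ is flat. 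For $(2)\Rightarrow(3)$, a module injection $M\hookrightarrow N$ induces an algebra injection $A\oplus M\to A\oplus N$ with $(A\oplus M)\otimes_AB=B\oplus(M\otimes_AB)$, so (2) forces $M\otimes_AB\to N\otimes_AB$ injective, giving the exactness clause. For the quasi-isomorphism clause, Corollary~\ref{cor.Gcofibration} reduces the $\sW$-cofibrancy of $f$ to checking that $M\otimes_AB\to N\otimes_AB\in\sW$ for every trivial fibration $M\xrightarrow{\sF\sW}N$ under $A$, which is immediate from the trivial-fibration clause of (2).

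\emph{The crux: $(1)\Rightarrow(2)$.} The trivial-fibration clause of (2) is contained in the definition of flatness (flatness already asserts that $C\otimes_AB\to D\otimes_AB$ is a trivial fibration whenever $C\xrightarrow{\sF\sW}D$). The real content is the injection clause, equivalently—via trivial extensions—the graded flatness of $B$ over $A$; this is the step I expect to be the main obstacle. The difficulty is that the flatness hypothesis only controls pullbacks possessing a trivial-fibration leg, whereas injectivity of $M\otimes_AB\to N\otimes_AB$ is an a priori purely graded condition attached to an arbitrary monomorphism, for which no such leg is available. My strategy is to manufacture the missing trivial fibration by resolution: writing $P=N/M$ and choosing a surjective quasi-isomorphism $F\xrightarrow{\sF\sW}P$ with $F$ semifree (hence graded-flat), I apply the flatness isomorphism to the square of trivial extensions built from $A\oplus N$, $A\oplus F$ and $A\oplus P$ to obtain
\[ (N\times_PF)\otimes_AB\cong(N\otimes_AB)\times_{P\otimes_AB}(F\otimes_AB). \]
Since $\ker(F\to P)$ is acyclic and $-\otimes_AB$ preserves acyclic modules (flatness implies $f$ is a $\sW$-cofibration by Lemma~\ref{lem.flatWcof}), I expect that this comparison, combined with right exactness and the graded flatness of $F$, pins down $\ker(M\otimes_AB\to N\otimes_AB)$ and forces it to vanish. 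Making this extraction rigorous—reconciling the differential-theoretic output of the flatness isomorphism with the graded injectivity one wants, and verifying that the chosen resolution loses no graded information—is the delicate point, and will rely on the explicit availability of semifree resolutions in $\DGCA_{\K}^{\le 0}$.
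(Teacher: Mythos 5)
Your proposal is sound, but it is organized around a different cycle than the paper's: you prove $(1)\Rightarrow(2)\Rightarrow(3)\Rightarrow(1)$, while the paper proves $(3)\Rightarrow(2)$, $(2)\Rightarrow(1)$ and $(1)\Rightarrow(3)$, so the two arguments share no hard implication. Your $(3)\Rightarrow(1)$ (the difference-map sequence $0\to C\times_DE\to C\oplus E\to D\to 0$) is a close cousin of the paper's $(2)\Rightarrow(1)$, which instead tensors the two exact rows $0\to\ker(f)\to P\times_QC\to C\to 0$ and $0\to\ker(f)\to P\to Q\to 0$. Your $(2)\Rightarrow(3)$ is correct, with one caution: Corollary~\ref{cor.Gcofibration} is stated for left-proper model categories, and left-properness of $\DGCA_{\K}^{\le 0}$ is only established by the paper \emph{after} this theorem (Corollary~\ref{cor.cofibrationflat}); the clean fix is to bypass it entirely — for $M$ acyclic, $A\oplus M\to A$ is a trivial fibration, so by (2) so is $B\oplus(M\otimes_AB)\to B$, i.e.\ $M\otimes_AB$ is acyclic, and Lemma~\ref{lem.wcofincdga} concludes. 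The genuine divergence is the crux: you extract graded injectivity from flatness by resolving the quotient $P=N/M$ with a semifree $F\xrightarrow{\sF\sW}P$ and feeding the cospan $A\oplus N\to A\oplus P\leftarrow A\oplus F$ to Definition~\ref{def.flatmorphism}, whereas the paper proves $(1)\Rightarrow(3)$ with the killer algebra $A[d^{-1}]$: for any short exact sequence, the trivial extensions of the identity cones $A[d^{-1}]\otimes_A(-)$ assemble into a pullback square of trivial fibrations, which flatness preserves, and that preservation is equivalent to exactness of the tensored sequence. Both devices encode a purely graded exactness statement as a pullback of trivial fibrations that flatness can see; the paper's is choice-free and self-contained within the section, yours imports the (standard) existence of semifree resolutions in $\DGMod(A)^{\le 0}$ but is closer to classical homological practice.

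The step you flag as delicate is in fact immediate, and there is no tension to reconcile: the flatness comparison is an isomorphism of DG-algebras, hence in particular of graded modules. Concretely, the sequence $0\to M\to N\times_PF\xrightarrow{\operatorname{pr}_F}F\to 0$ is exact (the kernel of $\operatorname{pr}_F$ is $\ker(N\to P)=M$) and splits degreewise because $F$ is graded-\emph{free}; hence it stays exact after $-\otimes_AB$, so $M\otimes_AB\to(N\times_PF)\otimes_AB$ is injective. Now take $u\in\ker\bigl(M\otimes_AB\to N\otimes_AB\bigr)$ and push it into $(F\otimes_AB)\times_{P\otimes_AB}(N\otimes_AB)$ through the flatness isomorphism: its component in $N\otimes_AB$ vanishes by assumption, and its component in $F\otimes_AB$ vanishes because the composite $M\to N\times_PF\xrightarrow{\operatorname{pr}_F}F$ is already zero. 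An element of a fibred product is determined by its two components, so $u$ maps to zero in the pullback, hence to zero in $(N\times_PF)\otimes_AB$, hence $u=0$. Note that the acyclicity of $\ker(F\to P)$ and the preservation of acyclic modules, which you invoke, play no role here; the only inputs are the graded freeness of $F$ (for the splitting) and the fact that $F\to P$ is a surjective quasi-isomorphism (so that $A\oplus F\to A\oplus P$ is a trivial fibration and the flatness hypothesis applies).
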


\begin{proof} 
It is clear that (3) implies (2).

We now prove that (1) implies (3). If $M\to N$ is a quasi-isomorphism of $A$-modules, then 
$A\oplus M\to A\oplus N$ is a weak equivalence in $\DGCA_A^{\le 0}$ and, since every flat morphism is a $\sW$-cofibration we also have that 
\[ (A\oplus M)\otimes_A B=B\oplus (M\otimes_AB)\to B\oplus (N\otimes_AB)=(A\oplus N)\otimes_A B\]
is a weak equivalence. Consider now a short exact sequence $0\to M\to N\to P\to 0$ in 
$\DGMod(A)^{\le 0}$. Then we have a pullback square of trivial fibrations
\[ \xymatrix{A\oplus(A[d^{-1}]\otimes_AN)\ar[d]\ar[r]&A\oplus(A[d^{-1}]\otimes_AM)\ar[d]\\
A\ar[r]&A\oplus(A[d^{-1}]\otimes_AP)}\]
and then also 
\[ \xymatrix{B\oplus(B[d^{-1}]\otimes_B(N\otimes_AB))\ar[d]\ar[r]&B\oplus(B[d^{-1}]\otimes_B(M\otimes_AB))\ar[d]\\
B\ar[r]&A\oplus(B[d^{-1}]\otimes_B(P\otimes_AB))}\]
is a pullback square of a trivial fibration: this is possible if and only if the sequence 
\[ 0\to M\otimes_AB\to N\otimes_AB\to P\otimes_AB\to 0\]
is exact. 

Finally we prove that (2) implies (1). By using trivial extensions we immediately see that 
for every injective morphism $M\to N$ of $A$-modules, the induced map
$M\otimes_AB\to N\otimes_AB$ is still injective.

By hypothesis the functor $-\otimes_AB$ preserves the class of trivial fibrations. Then we only need to show that it commutes with pullbacks of a given trivial fibration $f\colon P\xrightarrow{\sF\sW} Q$. To this aim, consider a morphism $C\to Q$ and the pullback $P\times_QC$ represented by the commutative diagram
\[ \xymatrix{	0 \ar@{->}[r] & \ker(f) \ar@{->}[r] \ar@{->}[d]_{\id} & P\times_QC \ar@{->}[r] \ar@{->}[d] & C \ar@{->}[d] \ar@{->}[r] & 0 \\
0 \ar@{->}[r] & \ker(f) \ar@{->}[r] & P \ar@{->}[r] & Q \ar@{->}[r] & 0	} \]
whose rows are exact. Applying the right exact functor $-\otimes_AB$ we obtain the commutative diagram
\[ \xymatrix{	0 \ar@{->}[r] & \ker(f)\otimes_AB \ar@{->}[r] \ar@{->}[d]_{\id} & \left(P\times_QC\right)\otimes_AB \ar@{->}[r] \ar@{->}[d] & C\otimes_AB \ar@{->}[d] \ar@{->}[r] & 0 \\
0 \ar@{->}[r] & \ker(f)\otimes_AB \ar@{->}[r] & P\otimes_AB \ar@{->}[r] & Q\otimes_AB \ar@{->}[r] & 0	} \]
whose rows are exact by hypothesis. It follows that $\left(P\times_QC\right)\otimes_AB$ is (isomorphic to) the pullback $(P\otimes_AB) \times_{(Q\otimes_AB)}(C\otimes_AB)$ as required.
\end{proof}

Notice that in the model category $\DGCA_{\K}^{\le 0}$ not every $\sW$-cofibration is flat: consider for instance 
the morphism of $\K$-algebras $f\colon\K[x]\to \K[d^{-1}]$, $\deg(x)=0$,  $f(x)=0$. 
Notice also that our definition of flatness of a morphism in  $\CDGA_{\K}^{\le 0}$ differs substantially from the notion of flat morphism  given in \cite{TVII}: this will be especially clear after the following two corollaries.

\begin{corollary}\label{cor.degreewiseflat}
Let $f\colon A\to B$ be a morphism in $\DGCA_{\K}^{\le 0}$ and assume that $A$ is concentrated in degree $0$. Then $f$ is flat in the sense of  Definition~\ref{def.flatmorphism} if and only if $B^j$ is a flat $A$-module for every index $j$.
\end{corollary}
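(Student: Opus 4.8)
The plan is to deduce everything from the characterization of flatness provided by condition~(3) of Theorem~\ref{thm.2flatnessCDGA}: for a morphism $f\colon A\to B$, flatness is equivalent to the functor $-\otimes_A B\colon \DGMod(A)^{\le 0}\to \DGMod(B)^{\le 0}$ preserving quasi-isomorphisms and sending short exact sequences of DG $A$-modules to short exact sequences. The guiding observation is that, since $A=A^0$ is concentrated in degree $0$, a DG $A$-module is just a non-positively graded complex of $A$-modules, and the tensor product decomposes degreewise as $(M\otimes_A B)^n=\bigoplus_{i+j=n}M^i\otimes_A B^j$. This is exactly what will let me pass between flatness of $f$ and flatness of the individual modules $B^j$.

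For the implication ``$B^j$ flat for all $j$ $\Rightarrow$ $f$ flat'' I would verify the two clauses of condition~(3). The short-exactness clause is immediate: for $0\to M\to N\to P\to 0$ in $\DGMod(A)^{\le 0}$ each summand $0\to M^i\otimes_A B^j\to N^i\otimes_A B^j\to P^i\otimes_A B^j\to 0$ is exact by flatness of $B^j$, and a direct sum of exact sequences is exact. For preservation of quasi-isomorphisms I would argue exactly as in the proof of Lemma~\ref{lem.wcofincdga}: tensoring commutes with mapping cones and a map of modules is a quasi-isomorphism precisely when its cone is acyclic, which reduces the claim to showing that $C\otimes_A B$ is acyclic whenever $C$ is an acyclic DG $A$-module. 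Here I would filter $B$ by its brutal truncations $\sigma_{\ge -n}B$; these fit into degreewise split short exact sequences whose successive quotients are the single flat modules $B^{-n}$ placed in one degree. Tensoring the acyclic complex $C$ with a flat module keeps it acyclic, so the associated long exact sequences give by induction that $C\otimes_A\sigma_{\ge -n}B$ is acyclic for every $n$; since $B=\colim_n \sigma_{\ge -n}B$ and homology commutes with filtered colimits, $C\otimes_A B$ is acyclic.

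The converse is the easy direction. Assuming $f$ flat, I would apply the short-exactness clause of condition~(3) of Theorem~\ref{thm.2flatnessCDGA} to a short exact sequence $0\to M\to N\to P\to 0$ of ordinary $A$-modules, viewed as DG $A$-modules concentrated in degree $0$ with zero differential. Reading the resulting exact sequence of graded $B$-modules in degree $j$ gives $0\to M\otimes_A B^j\to N\otimes_A B^j\to P\otimes_A B^j\to 0$; in particular $-\otimes_A B^j$ preserves injections, and therefore $B^j$ is a flat $A$-module for every index $j$.

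The one genuinely technical point is the acyclicity argument in the forward direction, namely that a bounded-above complex with flat terms preserves acyclicity under tensor product. I do not anticipate difficulties there, but it is the sole step where the non-positive grading (boundedness above) and the passage to an infinite colimit are essential, so I would carry out the truncation-plus-colimit argument explicitly rather than invoke homotopy flatness as a black box.
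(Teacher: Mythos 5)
Your proof is correct, and its skeleton matches the paper's: both directions are routed through condition (3) of Theorem~\ref{thm.2flatnessCDGA}, and your converse direction (specializing the exactness clause to short exact sequences of ordinary $A$-modules placed in degree $0$ and reading off degree $j$) is exactly the paper's argument. The only divergence is in how preservation of quasi-isomorphisms is established when every $B^j$ is flat: the paper observes that each $M\otimes_AB^j\to N\otimes_AB^j$ is a quasi-isomorphism and then invokes ``a standard spectral sequence argument'' whose convergence rests on $B$, $M$, $N$ being bounded above, whereas you first reduce to preservation of acyclicity via mapping cones (as in Lemma~\ref{lem.wcofincdga}) and then run an explicit induction over the brutal truncations $\sigma_{\ge -n}B$, finishing with the passage to the filtered colimit $B=\colim_n \sigma_{\ge -n}B$. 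This is the same filtration of $B$ by degree, unrolled by hand: each truncation step is a degreewise split short exact sequence with flat quotient $B^{-n}$, so it survives tensoring with the acyclic complex and yields a long exact sequence, and exactness of filtered colimits replaces the convergence argument. Your version buys a completely elementary, self-contained proof at the cost of length; the paper's is shorter but leaves the spectral-sequence bookkeeping implicit. Both hinge on boundedness above, which you correctly flag as the essential point.
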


\begin{proof} If $f$ is flat, then by condition (3) of Theorem~\ref{thm.2flatnessCDGA} it follows that 
every $B^j$ is a flat $A$-module. 
Conversely, if every $B^j$ is flat then for every short exact sequence 
$0\to M\to N\to P\to 0$ of differential graded $A$-modules, the 
sequence 
\[ 0\to M\otimes_AB\to N\otimes_AB\to P\otimes_AB\to 0\]
is exact. If $M\to N$ is a quasi-isomorphism in $\DGMod(A)^{\le 0}$, 
since both $B,M,N$ are bounded above, for every $j$ the morphism 
$M\otimes_AB^j\to N\otimes_AB^j$ is a quasi-isomorphism of complexes of $A$-modules and   
a standard spectral sequence argument implies that also $M\otimes_AB\to N\otimes_AB$ is a quasi-isomorphism. \end{proof}

\begin{corollary}\label{cor.cofibrationflat}
In the model category $\DGCA_{\K}^{\le 0}$ every cofibration is flat. In particular $\DGCA_{\K}^{\le 0}$ is left-proper.
\end{corollary}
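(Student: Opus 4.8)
The plan is to reduce to semifree extensions and then apply Theorem~\ref{thm.2flatnessCDGA}. Since every cofibration in $\DGCA_{\K}^{\le 0}$ is a retract of a semifree extension, and the class of flat morphisms is stable under retracts by Lemma~\ref{lem.flatclosedunderretracts}, it suffices to prove that every semifree extension $f\colon A\to B=A[\{x_i\}]$ is flat. I will check condition (3) of Theorem~\ref{thm.2flatnessCDGA}. Using Lemma~\ref{lem.wcofincdga} to identify ``$-\otimes_AB$ preserves quasi\-/isomorphisms of modules'' with ``$f$ is a $\sW$-cofibration'', condition (3) splits into two requirements: that $f$ be a $\sW$-cofibration, and that $-\otimes_AB$ carry short exact sequences of $A$-modules to short exact sequences. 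The second is immediate, since the underlying graded $A$-module of $B$ is free, with basis the monomials in the $x_i$, hence flat, so that $-\otimes_AB$ is exact.

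For the first requirement, by Lemma~\ref{lem.wcofincdga} I must show that $M\otimes_AB$ is acyclic whenever $M\in\DGMod(A)^{\le 0}$ is acyclic. The structural input, special to the non-positive grading, is that the generators admit a triangular well-ordering: a monomial of degree $d\le 0$ cannot involve a generator of degree $<d$, so listing the generators in order of decreasing degree (those of degree $0$ first, arbitrarily within each degree) yields $dx_i\in A[\{x_j\}_{j<i}]$ for every $i$. This realizes $B$ as a transfinite colimit of elementary extensions $B_0=A$, $B_{\alpha+1}=B_\alpha[x_\alpha]$, with $B_\lambda=\colim_{\alpha<\lambda}B_\alpha$ at limit ordinals. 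Since tensor products commute with colimits and cohomology commutes with filtered colimits, I have $M\otimes_AB=\colim_\alpha(M\otimes_AB_\alpha)$, and limit ordinals cause no trouble; so by transfinite induction everything comes down to the successor step. Writing $N=M\otimes_AB_\alpha$ (acyclic by the inductive hypothesis) and $R=B_\alpha$, and using $M\otimes_AB_{\alpha+1}\cong N\otimes_RR[x]$, I must show that $N\otimes_RR[x]=N[x]$ is acyclic for a single adjoined generator $x$ with $dx=b\in R$.

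Here I would filter $N[x]$ by powers of $x$: because $d(x^k)=k\,b\,x^{k-1}$ up to sign, the submodules $F_p=\bigoplus_{k\le p}Nx^k$ are subcomplexes, and each quotient $F_p/F_{p-1}\cong Nx^p$ carries the differential induced by $d_N$, hence is acyclic. By the long exact cohomology sequence each $F_p$ is acyclic, and since $N[x]=\colim_pF_p$ and cohomology commutes with this filtered colimit, $N[x]$ is acyclic as well; the case $\deg x=0$ is included, as then $b\in R^{1}=0$ and $N[x]$ is a direct sum of copies of $N$. This completes the verification that $f$ is flat. The last assertion is then formal: a flat morphism is a $\sW$-cofibration by Lemma~\ref{lem.flatWcof}, so every cofibration is a $\sW$-cofibration, which is exactly the characterization of left-properness recalled after Lemma~\ref{lem.preservaweak}.

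The main obstacle is the acyclicity of $M\otimes_AB$, and the point I expect to require care is that the non-positive grading is genuinely essential to it: it is precisely what forces the triangular well-ordering of the generators that makes the transfinite induction and the $x$-power filtration work. Example~\ref{ex.bruttobrutto} shows that without this grading constraint a polynomial extension can fail to be even a $\sW$-cofibration, hence fail to be flat, so no such purely formal reduction can succeed in that generality.
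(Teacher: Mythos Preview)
Your proof is correct and follows essentially the same approach as the paper: reduce to semifree extensions via retracts and Lemma~\ref{lem.flatclosedunderretracts}, use freeness of the underlying graded module for exactness, and reduce the $\sW$-cofibration check to the single-generator case via the degree-based triangular structure afforded by the non-positive grading, handling that case by filtering by powers of $x$. The only cosmetic differences are that the paper verifies condition (2) rather than (3) of Theorem~\ref{thm.2flatnessCDGA}, passes through finitely generated sub-DG-algebras and closure of $\sW$-cofibrations under finite composition instead of your explicit transfinite induction, and phrases the single-variable filtration argument as a spectral sequence.
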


\begin{proof}  The second part of the corollary is well known, nonetheless we give here a sketch of proof of the left-properness for the reader convenience and reference purposes.

Since  every cofibration in $\DGCA_{\K}^{\le 0}$ is a retract  of a semifree extension, according to Lemma~\ref{lem.flatclosedunderretracts} it is sufficient to prove that every semifree extension $A\to B$ is flat. We use Theorem~\ref{thm.2flatnessCDGA} and we prove that  
$-\otimes_AB\colon \DGCA_A^{\le 0}\to \DGCA_B^{\le 0}$ preserves the classes of injections and trivial fibrations.

The preservation of the class of injections is clear since the injectivity of a morphism is independent of the differentials and, as a graded module, $B$ is isomorphic to a direct sum of copies of $A$.

We have already seen that tensor product preserves the class of surjective morphisms and in order to conclude the proof we need to show that the semifree extension $A\to B$ is a $\sW$-cofibration.

Write $B=A[x_i]$, $i\in I$, and notice that for every  finite subset $U\subset B$ there exists a finite subset of indices $J\subset I$ such that $A[x_j]$, $j\in J$, is a differential graded subalgebra of $B$ containing $U$. Thus it not restrictive to assume that $B$ is a finitely generated semifree $A$-algebra.
Finally, since $\sW$-cofibrations are stable under finite composition we can reduce to the case 
$B=A[x]$, with $\bar{x}\le 0$ and $dx\in A$. 

Denoting by $B_n\subset B$, $n\ge 0$, the differential graded $A$-submodule of polynomial of degree $\le n$ in $x$, for every morphism $A\to C$ the cohomology of $C\otimes_A B$ can be computed via the spectral sequence associated to the filtration $C_n=C\otimes_A B_n$, whose  first page is a direct sum of copies of the cohomology of $C$. This clearly implies that the free simple extension $A\to B=A[x]$ is a 
$\sW$-cofibration.
\end{proof}

The following result is the analog (of the Artin version, cf. \cite[Lemma A.4, item (a)]{Sernesi}) of Nakayama's lemma in the category 
$\DGCA_{\K}^{\le 0}$.

\begin{proposition}\label{prop.naka} 
Let $I$ be a nilpotent differential graded ideal of an algebra $A\in \DGCA_{\K}^{\le 0}$ and 
let $f\colon P\to Q$ be a morphism of flat commutative differential graded $A$-algebras. Then $f$ is an isomorphism (resp.: a weak equivalence) if and only if the induced morphism 
\[ \bar{f}\colon P\otimes_A\frac{A}{I}=\frac{P}{IP}\longrightarrow Q\otimes_A\frac{A}{I}=\frac{Q}{IQ}\]
is an isomorphism (resp.: a weak equivalence).
\end{proposition}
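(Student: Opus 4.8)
The statement splits into the easy (``only if'') implication and the substantial (``if'') one, and within each we treat isomorphisms and weak equivalences. In the ``only if'' direction there is nothing to prove for isomorphisms, since $-\otimes_A A/I$ is a functor. For weak equivalences, recall that $\DGCA_{\K}^{\le 0}$ is left-proper (Corollary~\ref{cor.cofibrationflat}) and that flat morphisms are $\sW$-cofibrations (Lemma~\ref{lem.flatWcof}); hence $A\to P$ and $A\to Q$ lie in $\Cof_{\sW}$ and $f$ is a weak equivalence between $\sW$-cofibrant objects of $\bM_A$. Applying Lemma~\ref{lem.preservaweak} to the pushout along $A\to A/I$ — and recalling that the coproduct in $\DGCA_{\K}^{\le 0}$ is the tensor product, so that $P\amalg_A A/I = P/IP$ — yields immediately that $\bar f\colon P/IP\to Q/IQ$ is a weak equivalence.

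For the converse, fix $n$ with $I^n=0$ and filter $P$ and $Q$ by the differential graded submodules $I^kP$ and $I^kQ$; the morphism $f$ respects these filtrations. The first step is to compute the associated graded. Since $A\to P$ is flat, Theorem~\ref{thm.2flatnessCDGA} tells us that $-\otimes_A P$ preserves injections and short exact sequences; tensoring $0\to I^{k+1}\to I^k\to I^k/I^{k+1}\to 0$ with $P$ therefore identifies $I^kP\cong I^k\otimes_A P$ and gives
\[ I^kP/I^{k+1}P\;\cong\;(I^k/I^{k+1})\otimes_A P\;\cong\;(I^k/I^{k+1})\otimes_{A/I}\bar P, \]
the last isomorphism holding because $I^k/I^{k+1}$ is annihilated by $I$. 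Under these identifications the map induced by $f$ on the $k$-th graded piece is $(I^k/I^{k+1})\otimes_{A/I}\bar f$. Note also that $\bar P=P/IP$ and $\bar Q=Q/IQ$ are flat over $A/I$ by the pushout-stability of flatness (Lemma~\ref{lem.flatclosedunderretracts}).

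If $\bar f$ is an isomorphism, then each graded map $(I^k/I^{k+1})\otimes_{A/I}\bar f$ is an isomorphism, and since the filtration is finite ($I^nP=0$) an easy induction on $k$, using the five lemma applied to the exact sequences $0\to I^kP/I^{k+1}P\to P/I^{k+1}P\to P/I^kP\to 0$ and their $Q$-analogues, shows that $f$ is an isomorphism. When $\bar f$ is only a weak equivalence the same induction works with the long exact cohomology sequence in place of the five lemma, \emph{provided} we know that each graded map is a quasi-isomorphism. This is the heart of the matter and the main obstacle: tensoring the quasi-isomorphism $\bar f$ with the module $I^k/I^{k+1}$, which need \emph{not} be flat over $A/I$, must nonetheless yield a quasi-isomorphism.

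To settle this, observe that flatness of $\bar P$ and $\bar Q$ over $A/I$ means, by condition~(3) of Theorem~\ref{thm.2flatnessCDGA}, precisely that $-\otimes_{A/I}\bar P$ and $-\otimes_{A/I}\bar Q$ preserve quasi-isomorphisms; that is, $\bar P$ and $\bar Q$ are DG-flat. Choose a semifree (hence DG-flat) resolution $L\xrightarrow{\sim}N$ over $A/I$ of $N:=I^k/I^{k+1}$, and consider the commutative square with vertices $L\otimes_{A/I}\bar P$, $N\otimes_{A/I}\bar P$, $L\otimes_{A/I}\bar Q$, $N\otimes_{A/I}\bar Q$. The two horizontal maps $L\otimes_{A/I}\bar P\to N\otimes_{A/I}\bar P$ and $L\otimes_{A/I}\bar Q\to N\otimes_{A/I}\bar Q$ are quasi-isomorphisms because $\bar P$ and $\bar Q$ are DG-flat, while $L\otimes_{A/I}\bar f$ is a quasi-isomorphism because $L$ is DG-flat and $\bar f$ is a quasi-isomorphism. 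The \emph{2 out of 3} axiom then forces $N\otimes_{A/I}\bar f$ to be a quasi-isomorphism, which completes the inductive step and hence the proof.
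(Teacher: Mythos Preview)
Your proof is correct. The overall architecture matches the paper's—reduce to an analysis of the short exact sequences coming from the $I$-adic filtration and use the five lemma/long exact sequence—but you handle the crucial step differently.

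The paper immediately reduces to $I^2=0$, so there is a single short exact sequence $0\to P\otimes_A I\to P\to \bar P\to 0$ (and its $Q$-analogue), with the left-hand map identified as $g=\bar f\otimes_{A/I}\id_I$. To see that $g$ is a quasi-isomorphism when $\bar f$ is, the paper stays inside the model-categorical framework: since $\bar P,\bar Q$ are flat over $B=A/I$, the map $\bar f$ is a weak equivalence between $\sW$-cofibrant objects of $\bM_B$, and Lemma~\ref{lem.preservaweak} applied to the pushout along $B\to B\oplus I$ (the trivial extension) gives that $\bar f\oplus g$, hence $g$, is a weak equivalence. You instead keep the full filtration and, for the analogous step, drop down to classical homological algebra: resolve the non-flat module $I^k/I^{k+1}$ by a semifree $L$ and run a $2$-out-of-$3$ argument on the resulting square. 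Both arguments exploit the flatness of $\bar P,\bar Q$ over $A/I$; the paper's version is shorter and keeps the proof entirely within the abstractions already set up in Section~2, while yours is more self-contained and makes the homological content explicit.
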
  

\begin{proof} Denoting by $B=A/I$, 
it is not restrictive to assume that $I$ is a square zero ideal; in particular $I$ is a 
$B$-module
and we have a short exact sequence of $A$-modules
\[ 0\to I\to A\to \frac{A}{I}=B\to 0\,.\]
By Theorem~\ref{thm.2flatnessCDGA} we get a morphism of two short exact sequences of $A$-modules 
\begin{equation}\label{equ.naka} 
\begin{matrix}\xymatrix{	0 \ar@{->}[r] & P\otimes_AI\ar@{->}[r] \ar@{->}[d]^{g} &P \ar@{->}[r] \ar@{->}[d]^f & P\otimes_AB \ar@{->}[d]^{\bar{f}} \ar@{->}[r] & 0 \\
0 \ar@{->}[r] & Q\otimes_AI \ar@{->}[r] & Q \ar@{->}[r] & Q\otimes_AB \ar@{->}[r] & 0	}\end{matrix}
\end{equation}
where 
\[ g=\bar{f}\otimes\id_{I}\colon (P\otimes_AB)\otimes_B I\to (Q\otimes_AB)\otimes_B I\,.\]
If $\bar{f}$ is an isomorphism, then also $g$ is an isomorphism and the conclusion follows by snake lemma.
If $\bar{f}$ is a quasi-isomorphism, then it is a weak equivalence of flat $B$-algebras and then also 
$g$ is a weak equivalence by Lemma~\ref{lem.preservaweak}.
The proof now follows immediately by the five lemma applied to the long cohomology exact sequence of
\eqref{equ.naka}. 
\end{proof}

We shall denote by $\DGArt_{\K}^{\le 0}\subset 
\DGCA_{\K}^{\le 0}$ the full subcategory of differential graded local Artin algebra with residue field $\K$. By definition a commutative differential graded algebra $A=\oplus A^i$ belongs to 
$\DGArt_{\K}^{\le 0}$ if $A^0$ is a local Artin algebra with maximal ideal $\mathfrak{m}_{A^0}$ such that the composition $\alpha\colon \K\to A^0\to {A^0}/{\mathfrak{m}_{A^0}}$ is an isomorphism, and 
$A$ is a finitely generated  graded $A^0$-module. In particular $A$ is a finite dimensional differential graded $\K$-vector space and
$\mathfrak{m}_{A}:=\mathfrak{m}_{A^0}\oplus A^{<0}$ is a nilpotent differential graded ideal. For simplicity of notation we always identify $\K$ with the residue field $A/\mathfrak{m}_A$ via the isomorphism $\alpha$.
The following result is an immediate consequence of Proposition~\ref{prop.naka}.

\begin{corollary}\label{cor.naka} 
Let  $f\colon P\to Q$ be a morphism of flat commutative differential graded $A$-algebras, with 
$A\in \DGArt_{\K}^{\le 0}$.
Then $f$ is an isomorphism (resp.: a weak equivalence) if and only if the induced morphism 
\[ \bar{f}\colon P\otimes_A\K\longrightarrow Q\otimes_A\K\]
is an isomorphism (resp.: a weak equivalence).
\end{corollary}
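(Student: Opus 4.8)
The plan is to deduce this directly from Proposition~\ref{prop.naka} by taking the ideal $I$ to be the maximal differential graded ideal $\mathfrak{m}_A$. First I would recall that, by the very definition of the category $\DGArt_{\K}^{\le 0}$, the ideal $\mathfrak{m}_A=\mathfrak{m}_{A^0}\oplus A^{<0}$ is a nilpotent differential graded ideal of $A$, and the composition $\alpha\colon \K\to A^0\to A/\mathfrak{m}_A$ provides a canonical isomorphism $A/\mathfrak{m}_A\cong \K$. Thus $I=\mathfrak{m}_A$ satisfies exactly the standing hypothesis of Proposition~\ref{prop.naka}, namely that it is a nilpotent differential graded ideal of an algebra in $\DGCA_{\K}^{\le 0}$.

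With this choice, the reduction map appearing in Proposition~\ref{prop.naka},
\[ \bar f\colon P\otimes_A\frac{A}{I}=\frac{P}{IP}\longrightarrow Q\otimes_A\frac{A}{I}=\frac{Q}{IQ}, \]
becomes precisely the map $\bar f\colon P\otimes_A\K\to Q\otimes_A\K$ of the present statement, once we identify $A/\mathfrak{m}_A$ with $\K$ via $\alpha$ (so that $P/\mathfrak{m}_A P=P\otimes_A\K$ and likewise for $Q$). Since $P$ and $Q$ are assumed flat over $A$, Proposition~\ref{prop.naka} applies verbatim and yields both assertions at once: $f$ is an isomorphism if and only if $\bar f$ is, and $f$ is a weak equivalence if and only if $\bar f$ is.

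There is essentially no obstacle here: the corollary is literally the special case $I=\mathfrak{m}_A$ of the proposition, and the only points that need to be invoked are the nilpotency of $\mathfrak{m}_A$ and the identification $A/\mathfrak{m}_A\cong\K$, both of which are built into the definition of $\DGArt_{\K}^{\le 0}$. Accordingly I would keep the argument to a single sentence applying Proposition~\ref{prop.naka} with $I=\mathfrak{m}_A$, noting only that finite dimensionality of $A$ together with local Artinianity of $A^0$ guarantees the nilpotency of $\mathfrak{m}_A$ should a reader wish to check it independently.
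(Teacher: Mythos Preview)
Your proposal is correct and matches the paper's own argument exactly: the paper simply states that the corollary is an immediate consequence of Proposition~\ref{prop.naka}, and you have spelled out the obvious specialization $I=\mathfrak{m}_A$ that makes this work.
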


We denote by $\Art_{\K}\subset \DGArt_{\K}^{\le 0}$ the full subcategory of local Artin algebras with residue field $\K$, i.e., 
$A\in \Art_{\K}$ if and only if it is concentrated in degree $0$ and $A\in \DGArt_{\K}^{\le 0}$. The following 
corollary, equivalent to \cite[Lemma 5.1.1]{H04}, is a reformulation of the classical meaning of flatness in terms of relations \cite[Prop 3.1]{Art}, \cite[Thm. A.10]{Sernesi}.

\begin{corollary}\label{cor.flatnessandrelation} 
Let  $R$ be a flat commutative differential graded $A$-algebra, with 
$A\in \Art_{\K}$. Then the natural map $R\to H^0(R)$ is a trivial fibration of flat $A$-algebras if and only if 
$R\otimes_A\K\to H^0(R\otimes_A\K)$ is a trivial fibration.
\end{corollary}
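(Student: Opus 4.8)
The plan is to translate the two DG conditions into statements about the complex of $A$-modules $R=(\cdots\to R^{-1}\to R^0\to 0)$ and then recognize the asserted equivalence as the classical local criterion of flatness. First I record the formal reductions. Since $A$ is concentrated in degree $0$, the cohomology $H^0(R)$ is the discrete algebra $\coker(d\colon R^{-1}\to R^0)$, and the natural map $R\to H^0(R)$ is surjective in every strictly negative degree (the target vanishes there), hence is automatically a fibration in $\DGCA_{\K}^{\le 0}$; the same holds for $R\otimes_A\K\to H^0(R\otimes_A\K)$. Consequently the two sides of the equivalence reduce, respectively, to
\[\text{(LHS)}\quad H^{<0}(R)=0\ \text{and}\ H^0(R)\ \text{flat over}\ A,\qquad \text{(RHS)}\quad H^{<0}(R\otimes_A\K)=0,\]
where I also use right exactness of $-\otimes_A\K$ to identify $H^0(R\otimes_A\K)=H^0(R)\otimes_A\K$. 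By Corollary~\ref{cor.degreewiseflat}, flatness of $A\to R$ is the same as flatness of each $A$-module $R^j$.

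The first genuine step, needed for (RHS)$\Rightarrow$(LHS), is to propagate acyclicity from the special fibre to $R$ itself via the $\mathfrak{m}_A$-adic filtration. Since $A\to R$ is flat, tensoring the short exact sequences $0\to\mathfrak{m}_A^{k+1}\to\mathfrak{m}_A^{k}\to\mathfrak{m}_A^{k}/\mathfrak{m}_A^{k+1}\to 0$ with $R$ (Theorem~\ref{thm.2flatnessCDGA}(3)) identifies the graded pieces of the finite filtration $R\supseteq\mathfrak{m}_A R\supseteq\mathfrak{m}_A^2R\supseteq\cdots$ with
\[\frac{\mathfrak{m}_A^{k}R}{\mathfrak{m}_A^{k+1}R}\cong\left(\frac{\mathfrak{m}_A^{k}}{\mathfrak{m}_A^{k+1}}\right)\otimes_{\K}(R\otimes_A\K),\]
a finite direct sum of copies of the complex $R\otimes_A\K$. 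Under (RHS) each such piece has vanishing negative cohomology, and a downward induction on $k$ through the long exact cohomology sequences of $0\to\mathfrak{m}_A^{k+1}R\to\mathfrak{m}_A^{k}R\to\mathfrak{m}_A^{k}R/\mathfrak{m}_A^{k+1}R\to 0$ (the filtration terminates because $\mathfrak{m}_A$ is nilpotent) yields $H^{<0}(R)=0$. In particular $R\to H^0(R)$ is then a weak equivalence, and $\cdots\to R^{-1}\to R^0\to H^0(R)\to 0$ is a resolution of $H^0(R)$ by flat $A$-modules.

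The decisive point is the flatness of $M:=H^0(R)$, for which I would invoke the classical local criterion adapted to the nilpotent maximal ideal $\mathfrak{m}_A$ (the ``flatness in terms of relations'' of \cite{Art,Sernesi}): because $M/\mathfrak{m}_AM$ is automatically flat over the field $\K$, the module $M$ is flat over $A$ if and only if $\Tor_1^A(M,\K)=0$. Once $H^{<0}(R)=0$ is established, the flat resolution $\cdots\to R^{-1}\to R^0\to M\to 0$ computes $\Tor_i^A(M,\K)=H^{-i}(R\otimes_A\K)$ for every $i\ge 0$; hence (RHS) gives $\Tor_1^A(M,\K)=0$ and therefore the flatness of $M$, completing (RHS)$\Rightarrow$(LHS). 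The converse (LHS)$\Rightarrow$(RHS) is the same computation read backwards: granting $H^{<0}(R)=0$, the resolution is again available, and flatness of $M$ forces $\Tor_i^A(M,\K)=0$ for all $i\ge 1$, i.e. $H^{<0}(R\otimes_A\K)=0$.

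I expect the main obstacle to be precisely the module-theoretic flatness of $H^0(R)$: the acyclicity transfer is formal once flatness is used to split the associated graded into copies of the fibre, but concluding that the zeroth cohomology is flat is genuinely the content of the local criterion and cannot be extracted by purely homotopical formalities. A secondary point demanding care is the \emph{order} of the argument: the identification $\Tor_i^A(M,\K)=H^{-i}(R\otimes_A\K)$ is legitimate only after the filtration step has shown that $R$ resolves $H^0(R)$, so the vanishing of $\Tor_1$ must not be invoked circularly to prove $H^{<0}(R)=0$.
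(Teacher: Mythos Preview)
Your proof is correct and follows essentially the same approach as the paper: reduce to showing $H^{<0}(R)=0$ by a filtration/d\'evissage argument exploiting flatness of $R$, then use that $R$ is a flat resolution of $H^0(R)$ to identify $\Tor_1^A(H^0(R),\K)$ with $H^{-1}(R\otimes_A\K)$ and conclude flatness of $H^0(R)$ via the local criterion. The only cosmetic differences are that the paper runs the d\'evissage through a chain of small extensions (induction on the length of $A$) rather than the $\mathfrak{m}_A$-adic filtration, and for the forward implication it invokes Lemma~\ref{lem.pushoutFW} (pushouts preserve trivial fibrations between flat objects) instead of reading the $\Tor$ identification backwards; both variants encode the same content.
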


\begin{proof} As already said this is an easy consequence of Corollary~\ref{cor.degreewiseflat} and standard fact about flatness and we give a direct proof only fon completeness of exposition.  
Since $H^0(R\otimes_A\K)=H^0(R)\otimes_A\K$, 
one implication follows immediately from Lemma~\ref{lem.pushoutFW}. 
Conversely, if $H^i(R\otimes_A\K)=0$ for every $i<0$, we can prove by induction on the length of $A$ that 
also $H^i(R)=0$ for every $i<0$. In fact, since $R$ is flat over $A$, every  small extension
\[ 0 \to \K \to A\to B \to 0 \]
of Artin rings gives a short exact sequence 
\[ 0 \to R\otimes_A\K \to R\to R\otimes_AB \to 0 \]
with $R\otimes_AB$ flat over $B$ and the conclusion follows by the cohomology long exact sequence.
According to Corollary~\ref{cor.degreewiseflat} the morphism $R\to H^0(R)$ is a flat resolution of the $A$-module $H^0(R)$, therefore  
\[ \Tor_1^A(H^0(R),\K)=H^{-1}(R\otimes_A\K)=0\]
and $H^0(R)$ is flat over $A$.
\end{proof}

\bigskip

\section{Deformations of a morphism}\label{section.deformationmorphism}

In order to make a ``good''  deformation theory of a morphism in a model category, we need to introduce 
a class of morphisms  that heuristically corresponds to extensions for which 
Corollary~\ref{cor.naka} is valid in an abstract setting. 

\begin{definition}\label{def.M(K)}
Let $\bM$ be a left-proper model category. For every object $K\in \bM$ we 
denote by $\bM(K)$ the full subcategory of $\bM\!\downarrow\! K$ whose objects are the morphisms $A\to K$ that have the following property: for every commutative diagram  
\[ \xymatrix{	A \ar@/_1pc/[dr]^{\flat} \ar@{->}[r]^{\flat} & E\ar[d]^{h} \\
 & D	} \]
the morphism $h$ is a weak equivalence (respectively: an isomorphism) if and only if the induced pushout map $E\amalg_AK\to D\amalg_AK$ is a weak equivalence (respectively: an isomorphism).
\end{definition}

\begin{definition}\label{def.smallextension}
Let $\bM$ be a left-proper model category. A \textbf{small extension} in $\bM$ is a morphism $A\to K$ in $\bM(K)$ for some object $K\in\bM$. The class of small extensions is denoted by $\SExt$.
\end{definition}

\begin{definition}\label{def.deformationXL} 
Let $K\xrightarrow{f}X$ be a morphism in a left-proper model category $\bM$, with $X$ a fibrant object.   
A deformation of $f$ over  $(A\xrightarrow{p}K)\in \bM(K)$ is a commutative diagram 
\[ \xymatrix{	A\ar[d]^p\ar[r]^{f_A} & X_A\ar[d] \\
K\ar[r]^f & X	} \]
such that $f_A$ is flat and the induced map $X_A\amalg_AK\to X$ is a weak equivalence. 

A direct equivalence is given by a commutative diagram 
\[ \xymatrix{	A\ar[d]_{g_A}\ar[r]^{f_A} & X_A\ar[d]\\
Y_A\ar[r]\ar[ru]^h & X	} \]
Two deformations are \textbf{equivalent} if they are so under the equivalence relation generated by direct equivalences.
\end{definition} 

Notice that the assumption $(A\xrightarrow{p}K)\in \bM(K)$ implies that the morphism $h$ in Definition~\ref{def.deformationXL} is a weak equivalence. In fact, the pushout along $p$ gives a commutative diagram
\[ \xymatrix{	K\ar[d]_{g_A'}\ar[r]^{f_A'} & X_A\amalg_AK\ar[d] \\
Y_A\amalg_AK\ar[r]\ar[ru]^{h'} & X	} \]
and $h'$ is a weak equivalence by the \emph{2 out of 3} axiom.

We denote either by $\Def_f(A\xrightarrow{p}K)$ or, with a little abuse of notation, by $\Def_f(A)$ the quotient class of deformations up to equivalence.

If every cofibration is flat we can also consider $c$-deformations, defined as in Definition~\ref{def.deformationXL} by replacing flat morphisms with cofibrations. 
We denote by $c\Def_f(A)$ the quotient class of $c$-deformations up to equivalence.

Since flat morphisms and cofibrations are $\sW$-cofibrations (see Lemma~\ref{lem.flatWcof}) according to Lemma~\ref{lem.preservaweak} every morphism $A\to B$ in $\bM(K)$ induces two maps  
\[ \Def_f(A)\to \Def_f(B),\quad c\Def_f(A)\to c\Def_f(B), \qquad X_A\mapsto X_A\amalg_AB\,. \]

\begin{lemma}\label{lemma.defVScdef}
In the above setup, if every cofibration is flat  then: 
\begin{enumerate}

\item the natural morphism $c\Def_f(A)\to \Def_f(A)$ is bijective,

\item for every weak equivalence $A\to B$ in $\bM(K)$  the map 
$\Def_f(A)\to \Def_f(B)$ is bijective.
\end{enumerate}
\end{lemma}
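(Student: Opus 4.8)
The plan is to prove both statements by reducing everything to $c$-deformations and then exploiting the lifting and factorization axioms to manipulate the generated equivalence relation. Throughout I shall use that cofibrations are flat, hence $\sW$-cofibrations (Lemma~\ref{lem.flatWcof}); that flat maps are stable under pushout (Lemma~\ref{lem.flatclosedunderretracts}); and that weak equivalences between $\sW$-cofibrant objects are preserved by pushout (Lemma~\ref{lem.preservaweak}). The map $c\Def_f(A)\to\Def_f(A)$ is well defined because a cofibration is flat and a $c$-direct equivalence is a direct equivalence.

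For item (1), surjectivity goes by cofibrant replacement: given a deformation $f_A\colon A\to X_A$, factor it as $A\xrightarrow{\sC}Y_A\xrightarrow{h}X_A$ with $h$ a trivial fibration, and equip $Y_A$ with the map $Y_A\xrightarrow{h}X_A\to X$. Since $A\to Y_A$ and $f_A$ are flat and $h$ is a weak equivalence, Lemma~\ref{lem.preservaweak} gives that $Y_A\amalg_AK\to X_A\amalg_AK$ is a weak equivalence; composing with $X_A\amalg_AK\to X$ shows $Y_A$ is a $c$-deformation, and $h$ is a direct equivalence, so $[Y_A]\mapsto[X_A]$. For injectivity, suppose two $c$-deformations are linked by a finite zig-zag of direct equivalences through (merely flat) deformations $Z_0,\dots,Z_n$; choose for each $Z_i$ a cofibrant replacement $W_i\to Z_i$ (the identity at the two endpoints, which are already cofibrant). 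Each $W_i$ is a $c$-deformation by the same use of Lemma~\ref{lem.preservaweak}, and for each edge $Z_i\leftrightarrow Z_{i+1}$ the lifting property of the cofibration $A\to W_{i+1}$ against the trivial fibration $W_i\to Z_i$ produces a map $W_{i+1}\to W_i$ that is over $A$ and over $X$ and is a weak equivalence by the \emph{2 out of 3} axiom, hence a $c$-direct equivalence. This converts the zig-zag into one through $c$-deformations.

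For item (2) the map $\Def_f(A)\to\Def_f(B)$ is well defined, since $B\to X_A\amalg_AB$ is flat and $(X_A\amalg_AB)\amalg_BK\cong X_A\amalg_AK\to X$ is a weak equivalence. By item (1) and the evident compatibility of the pushout maps with the bijections $c\Def\cong\Def$, it suffices to show that $\psi\colon c\Def_f(A)\to c\Def_f(B)$ is bijective. The essential input is that a flat $f_A\colon A\to X_A$ is a $\sW$-cofibration, so $(f_A)_\ast$ preserves weak equivalences; applied to the weak equivalence $A\to B$ this forces the canonical map $X_A\to X_A\amalg_AB$ to be a weak equivalence. I construct an inverse $\Phi$ by factoring $A\to B\to Y_B$ as $A\xrightarrow{\sC}\Phi(Y_B)\xrightarrow{\sF\sW}Y_B$ and giving $\Phi(Y_B)$ the map to $X$ induced from $Y_B$. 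Lifting arguments of the same kind as in item (1) show $\Phi$ is independent of the factorization and sends direct equivalences to direct equivalences, hence descends to classes. Then $\psi\Phi=\id$ because the comparison $\Phi(Y_B)\amalg_AB\to Y_B$ is a weak equivalence by \emph{2 out of 3} (using that $\Phi(Y_B)\to\Phi(Y_B)\amalg_AB$ is a weak equivalence and $\Phi(Y_B)\to Y_B$ is a trivial fibration), hence a direct equivalence; and $\Phi\psi=\id$ because lifting the cofibration $A\to X_A$ against the trivial fibration $\Phi(X_A\amalg_AB)\to X_A\amalg_AB$ along the weak equivalence $X_A\to X_A\amalg_AB$ yields a direct equivalence $X_A\to\Phi(X_A\amalg_AB)$.

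The main obstacle is not any individual lifting but the bookkeeping required to replace the abstractly generated equivalence relation by explicit zig-zags and to verify at every stage that the maps produced by lifting are simultaneously over $A$ (or $B$) and over $X$ and are weak equivalences. The one piece of genuine content underlying all of this is the single fact that flatness of $f_A$ makes $X_A\to X_A\amalg_AB$ a weak equivalence whenever $A\to B$ is one; this is precisely what turns the pushout functor into an equivalence on deformation classes, and everything else is a formal consequence of the model-category axioms.
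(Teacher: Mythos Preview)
Your argument is correct and follows essentially the same route as the paper: cofibrant replacement together with the lifting axioms for part (1), and for part (2) the observation that flatness of $A\to X_A$ forces $X_A\to X_A\amalg_AB$ to be a weak equivalence, so that the (C,\,FW)-factorization of $A\to Y_B$ furnishes an inverse to the pushout map. The only cosmetic difference is that the paper argues surjectivity and injectivity separately rather than building the map $\Phi$ explicitly, and is terser about the zig-zag replacement in (1); your description of the lifting in the zig-zag (``$A\to W_{i+1}$ against $W_i\to Z_i$'') has the indices slightly scrambled depending on the direction of the edge, but the intent and the argument are clear.
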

  
\begin{proof} 
1) Replacing every deformation $A\xrightarrow{\flat} X_A$ with a factorization $A\xrightarrow{\sC}\widetilde{X_A}\xrightarrow{\sF\sW}X_A$, by Lemma~\ref{lem.preservaweak} we have $\widetilde{X_A}\amalg_AK\xrightarrow{\sW}X_A\amalg_AK$, and this proves that $c\Def_f(A)\to \Def_f(A)$  is surjective. The injectivity is clear since we can always assume $\widetilde{X_A}=X_A$ whenever $A\to X_A$ is a cofibration, and every direct equivalence of deformations 
\[ \xymatrix{	A\ar[d]_{g_A}\ar[r]^{f_A} & X_A\ar[d] \\
Y_A\ar[r]\ar[ru]^{\sW} & X	} \]
lifts to a diagram 
\[ \xymatrix{	A\ar[d]_{\sC}\ar[r]^{\sC} & \widetilde{X_A}\ar[r]^{\sF\sW} & X_A\ar[d] \\
\widetilde{Y_A}\ar[r]^{\sF\sW}\ar[ru]^{\sW} & Y_A\ar[r]\ar[ru]^{\sW}&X	} \]

2) By the first part we may prove that if $q\colon A\to B$ is a weak equivalence  then 
$c\Def_f(A)\to c\Def_f(B)$ is bijective. For every  c-deformation 
$B\to X_B\to X$, taking a factorization 
\[ \xymatrix{A\ar[d]^q\ar[r]^{\sC}&X_A\ar[d]^{\sF\sW}\\
B\ar[r]&X_B}\]
since weak equivalences are preserved under pushouts along cofibrations we get 
\[ \xymatrix{A\ar[d]^q\ar[r]^{\sC}&X_A\ar[d]^{\sW}\ar[dr]^{\sF\sW}&\\
B\ar[r]&X_A\amalg_AB\ar[r]^{\alpha}&X_B}\]
where $\alpha$ is a weak equivalence of flat $B$-objects: this proves the surjectivity of 
$c\Def_f(A)\to c\Def_f(B)$. 

By the lifting property it is immediate to see that if two $c$-deformations $B\to X_B\to X$ and 
$B\to Y_B\to X$ are directly equivalent, then also every pair of factorizations 
\[ A\xrightarrow{\sC}X_A \xrightarrow{\sW\sF}X_B\to X,\qquad 
A\xrightarrow{\sC}Y_A \xrightarrow{\sW\sF}Y_B\to X\]
gives directly equivalent c-deformations over $A$. The injectivity of 
$c\Def_f(A)\to c\Def_f(B)$ is now clear since for every c-deformation 
$A\xrightarrow{\sC}X_A\to X$ and every factorization 
\[A\xrightarrow{\sC}X_A\xrightarrow{\sC\sW}X_A'\xrightarrow{\sF\sW}X_A\amalg_AB\to X\,,\]
the deformation $A\to X_A\to X$ is equivalent to $A\to X_A'\to X$.
\end{proof}

Thus in a strong left-proper model category we have $c\Def_f=\Def_f$.

\begin{lemma}\label{lem.pushoutcfdefo}
In a strong left-proper model category consider a commutative diagram   
\begin{equation}\label{equ.cbarpushout} 
\begin{matrix}\xymatrix{	 & A\ar[dl]\ar[d]\ar[rd] & \\
X_A\ar[dr] & Z_A\ar[l]_{\sC\sW}\ar[r]^{\sW}\ar[d] & Y_A\ar[dl] \\
 & X & 	}\end{matrix}
\end{equation}
of $c$-deformations $A\to X_A\to X$, $A\to Y_A\to X$ and $A\to Z_A\to X$. Then $A\to X_A\amalg_{Z_A}Y_A\to X$ is a $c$-deformation.
\end{lemma}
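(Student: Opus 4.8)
The plan is to verify the three defining properties of a $c$-deformation for the pushout object $X_A\amalg_{Z_A}Y_A$: first that the structure map $A\to X_A\amalg_{Z_A}Y_A$ is a cofibration, second that this object maps compatibly to $X$, and third that the induced map $(X_A\amalg_{Z_A}Y_A)\amalg_A K\to X$ is a weak equivalence. First I would set up the pushout $W:=X_A\amalg_{Z_A}Y_A$ using the span $X_A\xleftarrow{\sC\sW}Z_A\xrightarrow{\sW}Y_A$, noting that the trivial cofibration $Z_A\xrightarrow{\sC\sW}X_A$ guarantees that its pushout $Y_A\to W$ along $Z_A\to Y_A$ is again a trivial cofibration. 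Composing the structure map $A\to Y_A$ (a cofibration, since $A\to Y_A\to X$ is a $c$-deformation) with $Y_A\xrightarrow{\sC\sW}W$ yields that $A\to W$ is a cofibration, settling the first requirement; the common map to $X$ exists by the universal property of the pushout, using the compatible maps $X_A\to X$, $Y_A\to X$ and their agreement on $Z_A$ witnessed by diagram~\eqref{equ.cbarpushout}.

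The substantive point is the weak-equivalence condition, and here the natural strategy is to pass to the fibre over $K$ and invoke that we are in a strong left-proper model category, so that all cofibrations are flat and Lemma~\ref{lem.preservaweak} applies to $\sW$-cofibrant objects. I would first observe that pushing out the whole diagram along $A\to K$ commutes with the internal pushout, giving
\[ W\amalg_A K\cong (X_A\amalg_A K)\amalg_{(Z_A\amalg_A K)}(Y_A\amalg_A K). \]
The three defining properties of the original $c$-deformations give weak equivalences $X_A\amalg_A K\xrightarrow{\sW}X$, $Y_A\amalg_A K\xrightarrow{\sW}X$ and $Z_A\amalg_A K\xrightarrow{\sW}X$, all over $X$, and the maps in the span remain a trivial cofibration and a weak equivalence after base change (trivial cofibrations are stable under pushout, and Lemma~\ref{lem.preservaweak} preserves the weak equivalence $Z_A\to Y_A$ since the objects involved are flat, hence $\sW$-cofibrant, over $A$).

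The cleanest way to conclude is to compare $W\amalg_A K$ with $X$ directly: in the pushout $(X_A\amalg_A K)\amalg_{(Z_A\amalg_A K)}(Y_A\amalg_A K)$ the leg $Z_A\amalg_A K\xrightarrow{\sC\sW}X_A\amalg_A K$ is a trivial cofibration, so its pushout $Y_A\amalg_A K\to W\amalg_A K$ is also a trivial cofibration; hence $W\amalg_A K$ is weakly equivalent to $Y_A\amalg_A K$, which in turn is weakly equivalent to $X$ by the $c$-deformation property of $Y_A$. Applying the \emph{2 out of 3} axiom to the resulting commutative triangle over $X$ then yields that $W\amalg_A K\to X$ is a weak equivalence, completing the verification. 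The main obstacle I anticipate is not any single computation but the bookkeeping needed to ensure that all the flatness/$\sW$-cofibrancy hypotheses of Lemma~\ref{lem.preservaweak} are genuinely in force after base change along $A\to K$; this is where strong left-properness is essential, since it is precisely what upgrades the cofibrations appearing in the span to flat morphisms and thereby guarantees that the weak equivalence $Z_A\xrightarrow{\sW}Y_A$ survives the pushout.
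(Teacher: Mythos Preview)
Your proof is correct and follows essentially the same route as the paper: show $A\to Y_A\xrightarrow{\sC\sW}X_A\amalg_{Z_A}Y_A$ is a cofibration, then use that $Y_A\amalg_AK\to (X_A\amalg_{Z_A}Y_A)\amalg_AK$ is a weak equivalence together with the \emph{2 out of 3} axiom. The only cosmetic difference is that you pass through the explicit colimit interchange $(X_A\amalg_{Z_A}Y_A)\amalg_AK\cong (X_A\amalg_AK)\amalg_{(Z_A\amalg_AK)}(Y_A\amalg_AK)$ and use stability of trivial cofibrations under pushout, whereas the paper applies Lemma~\ref{lem.preservaweak} directly to the weak equivalence $Y_A\to X_A\amalg_{Z_A}Y_A$ between flat $A$-objects; both arrive at the same triangle over $X$.
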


\begin{proof}
Since the composite map $A\xrightarrow{\sC} Y_A\xrightarrow{\sC\sW} X_A\amalg_{Z_A}Y_A$ is a cofibration we only need to prove that 
\[ (X_A\amalg_{Z_A}Y_A)\amalg_AK\to X \]
is a weak equivalence. Since $Y_A\to X_A\amalg_{Z_A}Y_A$ is a weak equivalence between flat $A$-objects, looking at the commutative diagram  
\[ \xymatrix{	Y_A\amalg_AK\ar[r]^-{\sW}\ar[d]^{\sW} & (X_A\amalg_{Z_A}Y_A)\amalg_AK\ar[dl] \\
X & 	} \]
the statement follows from the 2 out of 3 property.
\end{proof}

\begin{proposition}\label{prop.pushoutcfdefo}
In a strong left-proper model category two $c$-deformations $A\to X_A\to X$ and $A\to Y_A\to X$ are equivalent if and only if there exists a $c$-deformation $A\to Z_A\to X$ and a commutative diagram 
\begin{equation}\label{equ.cbarequivalence} 
\begin{matrix}\xymatrix{	 & A\ar[dl]\ar[d]\ar[rd] & \\
X_A\ar[r]^{\sC\sW}\ar[dr] & Z_A\ar[d] & Y_A\ar[l]_{\sC\sW}\ar[dl] \\
 & X & 	}\end{matrix}
\end{equation}
\end{proposition}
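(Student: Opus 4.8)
The plan is to reduce both implications to a single auxiliary relation. Define a relation $R$ on $c$-deformations by declaring $X_A \mathrel{R} Y_A$ whenever a diagram as in~\eqref{equ.cbarequivalence} exists. One implication is then immediate: in such a diagram the trivial cofibration $X_A \xrightarrow{\sC\sW} Z_A$ (and likewise $Y_A \xrightarrow{\sC\sW} Z_A$) is, by construction, a morphism over $A$ and over $X$, hence a direct equivalence of deformations in the sense of Definition~\ref{def.deformationXL}; thus $X_A$, $Z_A$ and $Y_A$ are all equivalent, so $R$-related deformations are equivalent. For the converse it suffices to show that $R$ is itself an equivalence relation containing every direct equivalence: since the equivalence of deformations is by definition the smallest equivalence relation containing the direct equivalences, it will then be contained in $R$.

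The key step is to check that every direct equivalence yields an instance of $R$. Given a direct equivalence $h\colon Y_A \to X_A$ (a weak equivalence over $A$ and over $X$), I would factor the induced map $(h,\id_{X_A})\colon Y_A\amalg_A X_A \to X_A$ as $Y_A\amalg_A X_A \xrightarrow{\sC} Z_A \xrightarrow{\sF\sW} X_A$. Since $A\to X_A$ and $A\to Y_A$ are cofibrations, the two structural maps $X_A\to Y_A\amalg_A X_A$ and $Y_A\to Y_A\amalg_A X_A$ are cofibrations (pushouts of cofibrations), so the composites $X_A\to Z_A$ and $Y_A\to Z_A$ are cofibrations. Composing them with the trivial fibration $Z_A\to X_A$ recovers $\id_{X_A}$ on the first factor and $h$ on the second; the \emph{2 out of 3} axiom then makes both composites weak equivalences, hence trivial cofibrations. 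Taking $Z_A\to X$ to be $Z_A\xrightarrow{\sF\sW} X_A\to X$ makes $A\to Z_A\to X$ a $c$-deformation: $A\to Z_A$ is a cofibration, and since $X_A\xrightarrow{\sC\sW} Z_A$ is a weak equivalence between flat, hence $\sW$-cofibrant, $A$-objects, Lemma~\ref{lem.preservaweak} gives a weak equivalence $X_A\amalg_A K\to Z_A\amalg_A K$, whence $Z_A\amalg_A K\to X$ is a weak equivalence by \emph{2 out of 3}. The factorization guarantees compatibility of both trivial cofibrations with the maps to $X$, so this is exactly a diagram of the form~\eqref{equ.cbarequivalence}.

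Reflexivity (take $Z_A=X_A$ with identities) and symmetry are immediate, so the remaining—and main—difficulty is transitivity. Here I would invoke Lemma~\ref{lem.pushoutcfdefo}: given $X_A\xrightarrow{\sC\sW} Z_A \xleftarrow{\sC\sW} Y_A$ and $Y_A\xrightarrow{\sC\sW} W_A \xleftarrow{\sC\sW} U_A$, the span $Z_A \xleftarrow{\sC\sW} Y_A \xrightarrow{\sC\sW} W_A$ of $c$-deformations meets the hypotheses of that lemma (one leg is a trivial cofibration, the other in particular a weak equivalence), so $V_A:=Z_A\amalg_{Y_A} W_A$ is again a $c$-deformation. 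Because trivial cofibrations are stable under pushout and composition, the composites $X_A\xrightarrow{\sC\sW} Z_A\to V_A$ and $U_A\xrightarrow{\sC\sW} W_A\to V_A$ are trivial cofibrations, while the maps $Z_A\to X$ and $W_A\to X$ agree on $Y_A$ and hence induce $V_A\to X$ making all triangles over $X$ commute; this witnesses $X_A\mathrel{R} U_A$. Having shown $R$ to be an equivalence relation containing all direct equivalences, the converse implication, and with it the proposition, follows. I expect transitivity, resting on the pushout stability supplied by Lemma~\ref{lem.pushoutcfdefo}, together with the factorization construction of the previous paragraph, to be the technical heart of the argument.
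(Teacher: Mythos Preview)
Your argument is correct and follows essentially the same strategy as the paper: show that the relation defined by diagram~\eqref{equ.cbarequivalence} is an equivalence relation (transitivity via the pushout of Lemma~\ref{lem.pushoutcfdefo}) and that every direct equivalence produces such a diagram by factoring the fold map $Y_A\amalg_A X_A\to X_A$ as a cofibration followed by a trivial fibration. Your write-up is in fact more detailed than the paper's, which leaves the transitivity step and the verification that $Z_A$ is a $c$-deformation largely implicit.
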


\begin{proof}
We need to prove that:

1) the relation $\sim$ defined by diagram \eqref{equ.cbarequivalence} is an equivalence relation. This follows immediately from Lemma~\ref{lem.pushoutcfdefo}.

2) if 
\[ \xymatrix{	 & A\ar[dl]\ar[rd] & \\
X_A\ar[rr]^{\sW}\ar[dr] & & Y_A\ar[dl] \\
 & X & 	} \]
is a direct equivalence of $c$-deformations, then $X_A\sim Y_A$. To this end consider a factorization
\[ \xymatrix{	X_A\ar[rdd]_{\sW}\ar[r]^-{\sC} & X_A\amalg_AY_A\ar[d]^{\sC} & Y_A\ar[l]_-{\sC}\ar@(d,r)[ldd]^{\Id} \\
 & Z_A\ar[d]^{\sF\sW} & \\
 & Y_A & 	} \]
and   the morphism $Z_A\amalg_AK\to Y_A\amalg_AK$ is a weak equivalence.
\end{proof} 

\begin{remark}
In the diagram \eqref{equ.cbarequivalence} it is not restrictive to assume that $X_A\amalg_AY_A\to Z_A$ is a cofibration: in fact we can always consider a factorization $X_A\amalg_AY_A\xrightarrow{\sC}Q_A\xrightarrow{\sF\sW}Z_A$ and  the map $Q_A\amalg_AK\to Z_A\amalg_AK$ is a weak equivalence.
\end{remark}

\bigskip
\section{Homotopy invariance of deformations}\label{section.homotopyinvariance}

The aim of this section is to prove that the deformation theory of fibrant objects is invariant under weak equivalences.

The following preliminary technical result is essentially contained in \cite{kenbrown,QuillenHA}. 

\begin{lemma}[Pullback of path objects]\label{lem.pullbackpathobjects}
Let $h\colon Q\to X$ be a fibration of fibrant objects in a model category and let 
\[ X\xrightarrow{\;i\;}X^I\xrightarrow{\;p=(p_1,p_2)\;}X\times X,\qquad  p_1i=p_2i=\id_X,\; i\in \sW,\; p\in \sF,\] 
be a path object of $X$. Then the morphism 
\[ \alpha=(\id_Q,ih,\id_Q)\colon Q\to Q\times_X X^I\times_X Q=
\lim\begin{pmatrix}\xymatrix{Q\ar[dr]^h&&X^I\ar[dl]_{p_1}\ar[dr]^{p_2}&&Q\ar[dl]_h\\
&X&&X&}\end{pmatrix}\]
extends to a  commutative diagram 
\[ \xymatrix{	Q\ar[dd]^h\ar[r]^-{\sC\sW}\ar[dr]^{\alpha} & Q^I\ar[d]^{\beta}\ar[r]^-{\sF\sW} & Q\times_X X^I\ar[d]^{\pi_1} \\
 & Q\times_X X^I\times_X Q\ar[d]^{\pi_2}\ar[r]^-{\pi_1}\ar[ur]^{\gamma} & Q\ar[d]^h \\
X\ar[r]_i & X^I\ar[r]_{p_1} & X	} \]
where:  $Q\times_X X^I$ is the fibered product of $h$ and $p_1$; $\gamma$ is the natural projection on the 
first two factors;  every 
$\pi_i$ denotes the projection on the $i$-th factor.
\end{lemma}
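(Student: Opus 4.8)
The plan is to produce $Q^I$ via the factorization axiom and then to verify the two nontrivial labels of the diagram---that the top-left arrow is a trivial cofibration (immediate) and that the top-right arrow $\gamma\beta$ is a trivial fibration---by a section plus \emph{2 out of 3} argument.

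First I would apply the factorization axiom to $\alpha\colon Q\to Q\times_X X^I\times_X Q$, writing
\[ Q\xrightarrow{\;j\;}Q^I\xrightarrow{\;\beta\;}Q\times_X X^I\times_X Q,\qquad j\in\sC\sW,\quad \beta\in\sF. \]
This defines $Q^I$, the trivial cofibration $j$ of the top row, and the fibration $\beta$; I then take the top-right arrow to be $\gamma\beta$, with $\gamma$ the projection onto the first two factors. Every commutativity in the diagram is then formal: $\pi_1\gamma=\pi_1$ by the definition of $\gamma$, the top-row composite is $\gamma\beta$ by construction, and $\pi_2\beta j=\pi_2\alpha=ih$ because $\beta j=\alpha$. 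It remains only to prove that $\gamma\beta$ is simultaneously a fibration and a weak equivalence.

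For the fibration claim I would identify $\gamma\colon Q\times_X X^I\times_X Q\to Q\times_X X^I$ as the pullback of $h\colon Q\to X$ along the endpoint map $Q\times_X X^I\to X$ induced by $p_2$; since $h\in\sF$ and fibrations are stable under pullback, $\gamma\in\sF$, and hence $\gamma\beta\in\sF$ as a composite of fibrations. For the weak equivalence claim the decisive point is that $p_1\colon X^I\to X$ is a \emph{trivial} fibration: it is a fibration, being the composite of $p=(p_1,p_2)\in\sF$ with a projection $X\times X\to X$ (a fibration because $X$ is fibrant), and it lies in $\sW$ by the \emph{2 out of 3} axiom applied to $p_1 i=\id_X$ together with $i\in\sW$. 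Its base change along $h$ is the projection $\pi_1\colon Q\times_X X^I\to Q$, which is therefore again a trivial fibration. Now $(\id_Q,ih)\colon Q\to Q\times_X X^I$---well defined since $p_1 ih=h$---is a section of $\pi_1$, hence lies in $\sW$ by \emph{2 out of 3}. Finally $\gamma\beta j=\gamma\alpha=(\id_Q,ih)\in\sW$ while $j\in\sW$, so a last application of \emph{2 out of 3} yields $\gamma\beta\in\sW$, and $\gamma\beta$ is a trivial fibration as required.

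The substantive content is concentrated in the single observation that the constant-path section $(\id_Q,ih)$ sits over the trivial fibration $\pi_1$; once this is in place all three weak-equivalence assertions collapse into iterated uses of \emph{2 out of 3}. The only step demanding real care is the bookkeeping of the three projections of $Q\times_X X^I\times_X Q$ and the recognition of $\gamma$ as a base change of $h$, which is precisely what forces $\gamma\beta$ to be a fibration and not merely a map.
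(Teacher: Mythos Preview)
Your proof is correct and follows essentially the same approach as the paper's own argument: factor $\alpha$ as $(\sC\sW,\sF)$, recognize $\gamma$ as a pullback of the fibration $h$, and deduce $\gamma\beta\in\sW$ from the fact that $\pi_1\colon Q\times_X X^I\to Q$ is the pullback of the trivial fibration $p_1$ together with the \emph{2 out of 3} axiom. You supply a bit more detail than the paper (in particular the explicit verification that $p_1\in\sF\sW$ and the section argument for $(\id_Q,ih)$), but the skeleton is identical.
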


\begin{proof}
Define $Q^I$ by taking a factorization of $\alpha$ as the composition of a trivial cofibration and a fibration $\beta\colon Q^I\to Q\times_X X^I\times_X Q$. Now we have a pullback diagram 
\[ \xymatrix{	Q\times_X X^I\times_X Q\ar[d]_{\gamma}\ar[r]^-{\pi_3} & Q\ar[d]^h \\
Q\times_X X^I\ar[r]_-{p_2\pi_2} & X	} \]
and, since $f$ is a fibration, also $\gamma$ and the composition 
$\gamma\beta\colon Q^I\to Q\times_X X^I$ are fibrations. Finally, the projection $Q\times_XX^I\xrightarrow{\pi_1}Q$ is a weak equivalence since it is the pullback of the trivial fibration $p_1$. Hence $\gamma\beta$ is a weak equivalence by the \emph{2 out of 3} axiom. 
\end{proof}

\begin{lemma}\label{lem.homotopyinjectivity} Let $\tau\colon X\to Y$ be a trivial fibration  of fibrant 
objects in  
a model category $\bM$, and let 
\begin{equation}\label{equ.hinj1} 
\begin{matrix}\xymatrix{A\ar[d]^p\ar[r]^{f_A}&Q\ar[d]^h\\
K\ar[r]^f&X}\end{matrix}\end{equation}
be a $c$-deformation of a morphism $f\colon K\to X$ along $(A\xrightarrow{p}K)\in \bM(K)$.
Then for every morphism $k\colon Q\to X$ such that $\tau h=\tau k$, $kf_A=fp$, the diagram 
\begin{equation}\label{equ.hinj2} 
\begin{matrix}\xymatrix{	A\ar[d]^p\ar[r]^{f_A} & Q\ar[d]^k \\
K\ar[r]^f & X	}\end{matrix}
\end{equation}
is a $c$-deformation equivalent to the previous one.
\end{lemma}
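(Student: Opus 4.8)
The plan is to split the statement into its two constituent claims: that the square \eqref{equ.hinj2} is again a $c$-deformation, and that it is equivalent to \eqref{equ.hinj1}. The first is a soft consequence of the \emph{2 out of 3} axiom, while the second is the substantive point and rests on producing a homotopy between $h$ and $k$ that is constant on $A$ and exploits that $\tau$ is a \emph{trivial} fibration. For the first claim, let $\phi_h,\phi_k\colon Q\amalg_AK\to X$ be the maps induced on the pushout by the pairs $(h,f)$ and $(k,f)$; these are well defined because $hf_A=fp=kf_A$. On the copy of $Q$ they restrict to $h$ and $k$, and on the copy of $K$ both restrict to $f$, so $\tau\phi_h=\tau\phi_k$. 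Since \eqref{equ.hinj1} is a $c$-deformation, $\phi_h\in\sW$, and $\tau\in\sW$, whence $\tau\phi_k=\tau\phi_h\in\sW$; as $\tau\in\sW$, the \emph{2 out of 3} axiom gives $\phi_k\in\sW$. Because $f_A$ is the same cofibration as before, this already shows \eqref{equ.hinj2} is a $c$-deformation.

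The core of the argument is to build a right homotopy from $h$ to $k$ relative to $A$. First I would form $X\times_YX$: both projections are pullbacks of $\tau$, hence trivial fibrations, so the diagonal $\Delta\colon X\to X\times_YX$ is a weak equivalence by \emph{2 out of 3}. Factoring $\Delta$ as a trivial cofibration $i_Y\colon X\to X^I_Y$ followed by a fibration $(q_1,q_2)\colon X^I_Y\to X\times_YX$, a further application of \emph{2 out of 3} shows that $(q_1,q_2)$ is itself a \emph{trivial} fibration. Now $(h,k)\colon Q\to X\times_YX$ (which lands in the fibre product precisely because $\tau h=\tau k$) restricts on $A$ to $\Delta\circ fp$, so the cofibration $f_A$ and the constant section $i_Y\circ fp\colon A\to X^I_Y$ form a commutative square that can be solved against the trivial fibration $(q_1,q_2)$; a lift is a fibrewise homotopy $H\colon Q\to X^I_Y$ with $q_1H=h$, $q_2H=k$ and $Hf_A=i_Y fp$. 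Composing with a comparison map into a genuine path object $X^I\xrightarrow{(p_1,p_2)}X\times X$ of $X$ as in Lemma~\ref{lem.pullbackpathobjects} — obtained by lifting the trivial cofibration $i_Y$ against the fibration $(p_1,p_2)$, which preserves both endpoints and the constant path — I would obtain $G\colon Q\to X^I$ with $p_1G=h$, $p_2G=k$ and $Gf_A=i\circ fp$, i.e. an honest right homotopy from $h$ to $k$ rel $A$.

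To finish I would pass to the undercategory $\bM_A$, in which $(Q,f_A)$ is cofibrant (as $f_A\in\sC$) and $(X,fp)$ is fibrant (as $X$ is fibrant), and in which $X^I$ is a genuine path object of $(X,fp)$. The homotopy $G$ exhibits $h$ and $k$ as right homotopic in $\bM_A$, and the standard conversion from right to left homotopy over a cofibrant object yields a good cylinder $Q\amalg_AQ\xrightarrow{(i_0,i_1)}C\xrightarrow{w}Q$ in $\bM_A$ together with a map $G'\colon C\to X$ satisfying $G'i_0=h$ and $G'i_1=k$. Here $i_0,i_1$ are trivial cofibrations (their composites with $w$ are the identity of $Q$), and $(C,G')$ is again a $c$-deformation, since the pushout of the trivial cofibration $i_0$ makes $Q\amalg_AK\to C\amalg_AK$ a weak equivalence and $\phi_h\in\sW$. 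Thus $i_0$ and $i_1$ are direct equivalences from \eqref{equ.hinj1} and from \eqref{equ.hinj2} to the common deformation $(C,G')$, and Proposition~\ref{prop.pushoutcfdefo} yields the desired equivalence.

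I expect the genuine difficulty to be the construction of $H$ in the second step: an ordinary path object of $X$ is useless here, because $h$ and $k$ are only required to coincide \emph{after} applying $\tau$, and the whole argument turns on the observation that trivial-fibrancy of $\tau$ makes the relative diagonal $X\to X\times_YX$ a weak equivalence, so that $(q_1,q_2)$ becomes a trivial fibration against which the cofibration $f_A$ can be lifted. Ensuring that the homotopy stays constant on $A$ at every stage, so that the entire discussion descends to $\bM_A$ and the cylinder inclusions become direct equivalences of deformations, is the bookkeeping that needs the most care.
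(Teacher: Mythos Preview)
Your argument is correct and takes a genuinely different route from the paper. The first step (that \eqref{equ.hinj2} is again a $c$-deformation) is the same in both. For the equivalence, the paper first replaces $h$ by a fibration via a $(\sC\sW,\sF)$-factorization, then argues that $h=k$ in $\operatorname{Ho}(\bM_A)$ and hence $h\sim^r k$; the crucial move is to apply Lemma~\ref{lem.pullbackpathobjects} to the fibration $h$ to obtain a path object $Q^I$ for $Q$ lying over $X^I$, and then lift the homotopy $\phi\colon Q\to X^I$ through $Q^I$ to extract a self-map $\eta\colon Q\to Q$ with $h\eta=k$. This single arrow $\eta$ is already a direct equivalence. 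Your approach instead builds the right homotopy concretely via the fibrewise path object $X^I_Y$ over $X\times_YX$ (avoiding both the reduction to $h\in\sF$ and any appeal to the homotopy category), then converts to a left homotopy and uses the cylinder $C$ as a common third deformation. Your route is more elementary in that it never invokes Lemma~\ref{lem.pullbackpathobjects}, but it pays for this by introducing more auxiliary objects; the paper's route is slicker once that lemma is available and yields a direct equivalence between the two given deformations rather than a zig-zag through a third one.

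Two minor remarks: your reference to Lemma~\ref{lem.pullbackpathobjects} when comparing $X^I_Y$ to $X^I$ is misplaced (you only use a plain lifting of the trivial cofibration $i_Y$ against the fibration $(p_1,p_2)$, not that lemma); and your final appeal to Proposition~\ref{prop.pushoutcfdefo} is both unnecessary and formally inapplicable, since that proposition assumes a strong left-proper model category while the present lemma is stated in an arbitrary $\bM$. Fortunately the definition of equivalence (generated by direct equivalences) already gives the conclusion once you have the two direct equivalences $i_0,i_1$ to $(C,G')$.
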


\begin{proof} We have a diagram  
\[ \xymatrix{	 & & & X\ar[dr]^\tau & \\
A\ar[r]^{f_A} & Q\ar[rru]^h\ar[rrd]_k\ar[r] & Q\amalg_AK\ar[ru]_{h'}\ar[rd]^{k'} & & Y \\
 & & & X\ar[ur]_\tau & 	} \]
and by the 2 out of 3 property $k'$ is a weak equivalence, i.e., the square \eqref{equ.hinj2}  is a $c$-deformation:  we need to prove that it is equivalent to  \eqref{equ.hinj1}.

Taking possibly a (CW,F)-factorization of $h$, followed by an extension of $k$:
\[\xymatrix{Q\ar[rr]^k\ar[d]^{\sC\sW}\ar@(ld,ul)[dd]_{h}&&X\ar[dd]^{\tau}\\
Q'\ar[d]^{\sF}\ar[urr]&&\\
X\ar[rr]^{\tau}&&Y}\]
it is not restrictive to assume that $h$ is a fibration.
Since $\tau h=\tau k$ and $\tau$ is a weak equivalence, the maps 
$h$ and $k$ are the same map in the homotopy category $\operatorname{Ho}(\bM_A)$. 
Thus, since $A\to Q$ is a cofibration, the maps $h$ and $k$ are right homotopic: in other words there exists a path object $X\to X^I\xrightarrow{(p_1,p_2)}X\times X$ and a morphism $\phi\colon Q\to X^I$ such that $h=p_1\phi$, $k=p_2\phi$. Taking the pullback of $p_1$ along $h$ we get  the following commutative diagram in $\mathbf{M}_A$:
\[ \begin{matrix}\xymatrix{Q\ar[rr]^-{\psi}\ar[rrd]_\phi\ar@(ur,ul)[rrr]^{\id_Q} && Q\times_XX^I\ar[r]\ar[d] & Q\ar[d]^h \\
 && X^I\ar[r]_{p_1} & X	}\end{matrix}\qquad. \]
Applying Lemma~\ref{lem.pullbackpathobjects} to the fibration $h$, we obtain the commutative diagram 
\[ \xymatrix{	Q^I\ar[d]^{\beta}\ar[r]^-{\sF\sW\;} & Q\times_X X^I\ar[d]^{\pi_1} \\
Q\times_X X^I\times_X Q\ar[d]^{\pi_2}\ar[r]^-{\pi_1}\ar[ur]^{\gamma} & Q\ar[d]^h \\
X^I\ar[r]_{p_1} & X	} \]
and, since $Q$ is cofibrant, the morphism $\psi$ lifts to a morphism $\psi'\colon Q\to Q^I$. Therefore  we have a commutative diagram 
\[ \xymatrix{Q\ar@(r,ul)[rrd]^{\psi}\ar@(d,ul)[ddr]_{\beta\psi'=(\Id,\phi,\eta)}\ar[dr]^{\psi'}&&\\	
&Q^I\ar[d]^{\beta}\ar[r]^-{\sF\sW\;} & Q\times_X X^I\ar[d]^{\pi_1} \\
&Q\times_X X^I\times_X Q\ar[ur]^{\gamma}\ar[r]^-{\pi_1} &Q} \]
In particular $h\eta=p_2\phi=k$, and the morphism $\eta$ gives the required equivalence of deformations:
\[ \xymatrix{	 & Q\ar[dr]^h & \\
A\ar[ru]^{f_A}\ar[rd]_{f_A} & & X \\
 & Q\ar[ur]_k\ar[uu]^\eta & 	} \]
\end{proof}

We are now ready to prove the main result of this section.

\begin{theorem}[Homotopy invariance of deformations]\label{thm.homotopyequivalence}
Let $K\xrightarrow{f}X\xrightarrow{\tau}Y$ be morphisms in a model category $\bM$ and consider a map  
$A\to K$ in $\bM(K)$. If every cofibration is flat and 
$\tau$ is a weak equivalence of fibrant objects, then  the natural  map 
\[ \Def_f(A)\to \Def_{\tau f}(A),\qquad (A\to X_A\to X)\mapsto (A\to X_A\to X\xrightarrow{\tau}Y),\] 
is bijective. 
\end{theorem}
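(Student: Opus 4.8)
The plan is to exploit the reductions already available and then factor $\tau$ into elementary pieces. First I would pass from $\Def$ to $c\Def$: since every cofibration is flat, Lemma~\ref{lemma.defVScdef} identifies $c\Def_f(A)=\Def_f(A)$ and $c\Def_{\tau f}(A)=\Def_{\tau f}(A)$, so it suffices to work with $c$-deformations $A\xrightarrow{\sC}X_A\xrightarrow{u}X$. The map in the statement is visibly well defined: post-composing $u$ with the weak equivalence $\tau$ leaves $f_A$ (hence its flatness) untouched, turns the structural identity $uf_A=fp$ into $\tau uf_A=\tau fp$, and by the \emph{2 out of 3} axiom sends the weak equivalence $X_A\amalg_AK\to X$ to a weak equivalence $X_A\amalg_AK\to Y$; direct equivalences are carried to direct equivalences by the same post-composition.

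Next I would factor $\tau=\pi\iota$ with $\iota\colon X\xrightarrow{\sC\sW}X'$ and $\pi\colon X'\xrightarrow{\sF\sW}Y$, where $X'$ is fibrant (being the source of a fibration over the fibrant object $Y$). Then the map of the theorem is the composite $\Def_f(A)\xrightarrow{\iota_*}\Def_{\iota f}(A)\xrightarrow{\pi_*}\Def_{\tau f}(A)$, and it is enough to prove that each factor is bijective. For $\pi_*$, surjectivity is a lifting: given a $c$-deformation $A\xrightarrow{\sC}X_A\xrightarrow{g}Y$ of $\tau f$, the square with sides $f_A$, $\iota fp$, $g$, $\pi$ commutes, so the cofibration $f_A$ lifts against the trivial fibration $\pi$ to a map $\tilde g\colon X_A\to X'$ with $\pi\tilde g=g$ and $\tilde g f_A=\iota fp$, which is a $c$-deformation of $\iota f$ lying over $g$. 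For injectivity I would first bring two $c$-deformations of $\iota f$ with equivalent images to a common underlying object: by Proposition~\ref{prop.pushoutcfdefo} their images fit in a roof $X_A\xrightarrow{\sC\sW}Z\xleftarrow{\sC\sW}Y_A$ over $Y$, and lifting $Z\to Y$ along $\pi$ (extending the two given maps along the trivial cofibrations) realizes both deformations, up to direct equivalence, as maps $Z\to X'$ that agree after $\pi$; Lemma~\ref{lem.homotopyinjectivity} then identifies them, and chaining the equivalences gives injectivity.

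The genuinely delicate point is $\iota_*$, the trivial cofibration case, and this is where I expect the main obstacle. Injectivity is easy: since $X$ is fibrant, $\id_X$ lifts against $\iota$ to a retraction $r\colon X'\to X$ with $r\iota=\id_X$, and post-composition by the weak equivalence $r$ defines $r_*\colon\Def_{\iota f}(A)\to\Def_f(A)$ with $r_*\iota_*=\id$. Surjectivity cannot be obtained by the same token, because $\iota r$ is only homotopic, not equal, to $\id_{X'}$; the trick I would use is to convert this homotopy into an honest coincidence under a trivial fibration. Given a $c$-deformation $A\xrightarrow{\sC}X_A\xrightarrow{g}X'$ of $\iota f$, the pair $A\xrightarrow{\sC}X_A\xrightarrow{rg}X$ is a $c$-deformation of $f$ whose $\iota_*$-image is $(X_A,\iota rg)$, so I must show $(X_A,\iota rg)\sim(X_A,g)$.

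To do this I would factor $r=\sigma\rho$ with $\rho\colon X'\xrightarrow{\sC\sW}X''$ and $\sigma\colon X''\xrightarrow{\sF\sW}X$, with $X''$ again fibrant. The identity $r\iota=\id_X$ then gives $\sigma(\rho\iota rg)=rg=\sigma(\rho g)$, so Lemma~\ref{lem.homotopyinjectivity} applied to the trivial fibration $\sigma$ yields $\rho_*(X_A,\iota rg)\sim\rho_*(X_A,g)$; and since $\rho$ is itself a trivial cofibration of fibrant objects it admits a retraction, whence $\rho_*$ is injective by the argument of the previous paragraph, so I may cancel it and conclude $(X_A,\iota rg)\sim(X_A,g)$. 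This closes the loop without circularity, because the injectivity of $\rho_*$ uses only the existence of a retraction, independently of any surjectivity statement. Assembling the trivial fibration and trivial cofibration cases then gives the bijectivity of $\tau_*=\pi_*\iota_*$, and I regard the non-circular extraction of the trivial cofibration surjectivity from Lemma~\ref{lem.homotopyinjectivity} as the crux of the whole argument.
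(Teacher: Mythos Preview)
Your argument is correct, but the paper takes a shorter path. Instead of factoring $\tau=\pi\iota$ as a trivial cofibration followed by a trivial fibration, the paper invokes Ken Brown's lemma to reduce immediately to the case where $\tau$ itself is a trivial fibration of fibrant objects. Concretely, the Ken Brown factorization produces a span $X\xleftarrow{q_1}Z\xrightarrow{q_2}Y$ of trivial fibrations together with a section $j\colon X\to Z$ of $q_1$ satisfying $q_2 j=\tau$; once one knows that $(q_1)_*$ and $(q_2)_*$ are bijections, $j_*=(q_1)_*^{-1}$ is forced to be one and hence so is $\tau_*=(q_2)_*j_*$. This eliminates the trivial-cofibration case entirely, so the step you call ``the crux'' (the non-circular surjectivity of $\iota_*$ via a further factorization $r=\sigma\rho$ and a cancellation of $\rho_*$) simply does not arise in the paper's proof.

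For the remaining trivial-fibration case the two arguments are close. Your surjectivity lifts $g\colon X_A\to Y$ along $\pi$ using the cofibration $A\to X_A$; the paper instead lifts the induced map $Y_A\amalg_AK\to Y$ using the cofibration $K\to Y_A\amalg_AK$, which is a minor variation. For injectivity you organize things via the roof of Proposition~\ref{prop.pushoutcfdefo} and then lift; the paper reduces directly to the case of a direct equivalence over $Y$ and applies Lemma~\ref{lem.homotopyinjectivity} once. Your route is slightly more constructive (and in fact, if you use the remark after Proposition~\ref{prop.pushoutcfdefo} to make $X_A\amalg_AY_A\to Z$ a cofibration and lift simultaneously against both legs, the appeal to Lemma~\ref{lem.homotopyinjectivity} becomes unnecessary there). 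What your approach buys is a self-contained argument that does not rely on recognizing the Ken Brown trick; what the paper's approach buys is brevity, since half of your work disappears.
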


\begin{proof} 
By Ken Brown's lemma it is not restrictive to assume that $\tau\colon X\to Y$ is a trivial fibration of fibrant objects. According to Lemma~\ref{lemma.defVScdef}
 we may replace $\Def(A)$ with  $c\Def(A)$  at any time.

In order to show the surjectivity of $c\Def_f(A)\to c\Def_{\tau f}(A)$ observe that if $A\to Y_A\xrightarrow{h} Y$ is a $c$-deformation, then $K\to Y_A\amalg_A K$ is a cofibration. Therefore the weak equivalence $Y_A\amalg_A K\xrightarrow{h'} Y$ lifts to a weak equivalence $Y_A\amalg_A K\to X$.

Next we prove the injectivity of $c\Def_f(A)\to c\Def_{\tau f}(A)$, i.e.,  that 
two $c$-deformations of $f$, $A\to X_A\to X$ and $A\to Z_A\to X$,  are equivalent in $c\Def_f(A)$ if 
 $A\to X_A\to X\to Y$ and  $A\to Z_A\to X\to Y$ are equivalent in 
$c\Def_{\tau f}(A)$. By the  argument used in the proof of the surjectivity it is not restrictive to assume that $A\to X_A\to X\to Y$ and  $A\to Z_A\to X\to Y$ are  are direct equivalent,  i.e., that there exists a commutative diagram 
\[ \xymatrix{&X_A\ar[r]^k\ar[dd]^{\eta}&X\ar[dr]^{\tau}&\\
A\ar[ur]^{\sC}\ar[rd]_{\sC}&&&Y\\
&Z_A\ar[r]^h&X\ar[ur]^{\tau}&}\]
Now $h\eta\colon X_A\to X$ is clearly equivalent to $h\colon Z_A\to X$, while $k,h\eta\colon X_A\to X$ are equivalent by Lemma~\ref{lem.homotopyinjectivity}. 
\end{proof}

\begin{remark} By Theorem~\ref{thm.homotopyequivalence} it makes sense to define 
deformations of a morphism $K\to X$ even if $X$ is not fibrant in $\bM_K$. To this end 
it is sufficient to consider a fibrant replacement $X\xrightarrow{\sW}Y\xrightarrow{\sF}*$ and define $\Def_X=\Def_Y$. 
\end{remark}

\bigskip

\section{Lifting problems}
\label{sec.lifting}

Let $\bM$ be a strong left-proper model category (i.e. a left-proper model category where every cofibration is flat). The full subcategory of flat objects $^\flat\bM$ inherits the model structure of $\bM$, meaning that $^\flat\bM$ is closed with respect to every axiom even if it may not be complete and cocomplete; for the axioms of a model structure we refer to~\cite{Hov99}. For every morphism $f\colon A\to B$ in $\bM$ the pushout $-\amalg_AB$ defines a functor between the undercategories
\[ f_{\ast}\colon ^\flat\bM_A \to \mbox{ }^\flat\bM_B \]
endowed with the model structures induced by $\bM$. Notice that in general $^\flat(\bM_A)\neq(^\flat\bM)_A$; throughout all the paper we shall denote the category $^\flat(\bM_A)$ of $A$-flat objects simply by $^\flat\bM_A$ as above. By assumption $f_{\ast}$ preserves cofibrations and weak equivalences. Therefore, whenever $f_{\ast}$ preserves fibrations, it makes sense to study whether the following \emph{lifting problems} admit solutions.
\begin{itemize}
\item \textbf{Lifting}: Consider a commutative diagram of solid arrows
\[ \xymatrix{	P_A \ar@{->}[dr]_{g_A} \ar@{->}[dd] \ar@{->}[rr] & & S_A \ar@{-}[d] \ar@{->}[dr]^{p_A} & \\
 & Q_A \ar@{->}[dd] \ar@{->}[rr] \ar@{-->}[ur]|-{h_A} & \ar@{->}[d] & R_A \ar@{->}[dd] \\
P_B \ar@{->}[dr]_{g_B} \ar@{-}[r] & \ar@{->}[r] & S_B \ar@{->}[dr]^{p_B} & \\
 & Q_B \ar@{->}[rr] \ar@{->}[ur]|-{h_B} & & R_B	} \]
in $\bM_A$, where the upper square is in $^\flat\bM_A$ and reduces to the bottom square applying $f_{\ast}$, and moreover the map $g_A$ is a cofibration (respectively: trivial cofibration) and the map $p_A$ is a trivial fibration (respectively: fibration). Then there exists a (dashed) lifting $h_A\colon Q_A\to S_A$ which reduces to $h_B$.
\item \textbf{(CW,F)-factorization}: Given a morphism $g\colon M\to N$ in $^\flat\bM_A$, together with a factorization $M\amalg_AB\xrightarrow{\sC\sW} Q_B\xrightarrow{\sF} N\amalg_AB$ of the map $f_{\ast}(g)=g\amalg_AB$ in $\bM_B$, then there exists a commutative diagram
\[ \xymatrix{	M\ar@{->}@/^2pc/[rr]^{g}\ar@{->}[d]\ar@{-->}[r]_{\sC\sW} & Q_A\ar@{-->}[d]\ar@{-->}[r]_{\sF} & N\ar@{->}[d] \\
M\amalg_AB\ar@{->}[r] & Q_B\ar@{->}[r] & N\amalg_AB 		} \]
in $\bM_A$, where the lower row is obtained by applying the functor $f_{\ast}$ to the upper row.
\item \textbf{(C,FW)-factorization}: Given a morphism $g\colon M\to N$ in $^\flat\bM_A$, together with a factorization $M\amalg_AB\xrightarrow{\sC} Q_B\xrightarrow{\sF\sW} N\amalg_AB$ of the map $f_{\ast}(g)=g\amalg_AB$ in $\bM_B$, then there exists a commutative diagram
\[ \xymatrix{	M\ar@{->}@/^2pc/[rr]^{g}\ar@{->}[d]\ar@{-->}[r]_{\sC} & Q_A\ar@{-->}[d]\ar@{-->}[r]_{\sF\sW} & N\ar@{->}[d] \\
M\amalg_AB\ar@{->}[r] & Q_B\ar@{->}[r] & N\amalg_AB 		} \]
in $\bM_A$, where the lower row is obtained by applying the functor $f_{\ast}$ to the upper row.
\item \textbf{Weak retractions of cofibrations}: Let $g_A\colon P_A\to R_A$ be a cofibration in $^\flat\bM_A$, and consider the diagram of solid arrows
\[ \xymatrix{	Q_A \ar@{-->}[dr]_{\bar{g}_A} \ar@{->}[dd] \ar@{->}[rr]^{j_A} & & P_A \ar@{-}[d] \ar@{->}[dr]^{g_A} \ar@{->}[rr]^{q_A} & & Q_A  \ar@{-}[d] \ar@{-->}[dr]^{\bar{g}_A} & \\
 & S_A \ar@{-->}[dd] \ar@{-->}[rr]^<<<<<<{i_A} & \ar@{->}[d] & R_A \ar@{->}[dd] \ar@{-->}[rr]^<<<<<<{p_A}  & \ar@{->}[d] & S_A \ar@{-->}[dd] \\
Q_B \ar@{->}[dr]_{\bar{g}_B} \ar@{-}[r]^{j_B} & \ar@{->}[r] & P_B \ar@{->}[dr]^{g_B} \ar@{-}[r]^{q_B} & \ar@{->}[r] & Q_B \ar@{->}[dr]^{\bar{g}_B} & \\
 & S_B \ar@{->}[rr]^{i_B} & & R_B \ar@{->}[rr]^{p_A} & & S_B 		} \]
in $\bM_A$, where the bottom rectangle is a retraction of the map $g_B=f_{\ast}(g_A)$ in $\bM_B$, all the horizontal arrows are weak equivalences and the retraction $Q_A\xrightarrow{j_A} P_A\xrightarrow{q_A} Q_A$ reduces to $Q_B\xrightarrow{j_B} P_B\xrightarrow{q_B} Q_B$ applying $f_{\ast}$. Then there exist dashed morphisms giving a retraction of $g_A$ fitting the diagram above, where again all the horizontal arrows are weak equivalences.
\end{itemize}

\begin{remark}[(trivial) cofibrations]
If the \emph{(CW,F)-factorization} lifting problem is satisfied, then the following lifting problem is so.
Given an object $M$ in $^\flat\bM_A$ together with a (trivial) cofibration $g_B\colon M\amalg_AB\to N_B$ with $N_B$ fibrant in $\bM_B$, then there exists a commutative square
\[ \xymatrix{	M\ar@{->}[d]\ar@{-->}[r]^{g_A} & N_A\ar@{-->}[d] \\
M\amalg_AB\ar@{->}[r]^<<<<{g_B} & N_B 	} \]
in $\bM_A$, where $g_A$ is a (trivial) cofibration and $g_B=f_{\ast}(g_A)$.
\end{remark}

Notice that for every surjective map $f$ in $\CDGA_{\K}^{\le 0}$ the functor $f_{\ast}$ preserves fibrations. Motivated by geometric applications in Deformation Theory, the aim of the following subsections is to prove that given a surjective morphism $f\colon A\to B$ in $\DGArt_{\K}^{\le 0}$, the functor $f_{\ast}\colon \CDGA_A^{\le 0} \to \CDGA_B^{\le 0}$ satisfies the lifting problems introduced above.
The main idea to prove the claim relies on a technical lifting problem involving trivial idempotents, see Subsection~\ref{section.trivialidempotents}. By Lemma~\ref{prop.fixedloci} this is equivalent to solve the \emph{weak retractions of cofibrations} lifting problem.
The \emph{(CW,F)-factorization} and the \emph{(C,FW)-factorization} lifting problems in $\CDGA_{\K}^{\le 0}$ are solved in Theorem~\ref{thm.liftingfactorization2} and Theorem~\ref{thm.liftingfactorization} respectively. As a consequence, the lifting problem of (trivial) cofibrations is solved in Corollary~\ref{corollary.CW}.

All the lifting problems described above essentially deal with axioms of model categories, except for the one on retractions where some additional hypothesis have been assumed. Example~\ref{example.liftidempotents} will show that if we drop the assumption on the horizontal arrows, then the \emph{weak retractions of cofibrations} lifting problem may not admit solution even in the strong left-proper model category $\CDGA_{\K}^{\le 0}$.

\medskip
\subsection{Lifting of liftings}

\begin{lemma}\label{lemma.slashboxes}
Let $A\to B$ be a surjective morphism in $\DGArt_{\K}^{\le 0}$ and consider a fibration (respectively: trivial fibration) $p\colon S\to R$ in $\CDGA_A^{\le 0}$. Then the natural morphism
\[ S\to R\times_{R\otimes_AB}(S\otimes_AB) \]
is a fibration (respectively: trivial fibration).
\begin{proof}
Denote by $J$ the kernel of $A\to B$ and fix $i\le 0$. If $S^i\to R^i$ is surjective the following commutative diagram
\[ \xymatrix{	S^i\otimes_AJ \ar@{->}[r] \ar@{->}[d] & S^i \ar@{->}[r] \ar@{->}[d] & S^i\otimes_AB\ar@{->}[r] \ar@{->}[d] & 0 \ar@{->}[d] \\
R^i\otimes_AJ \ar@{->}[r] \ar@{->}[d] & R^i \ar@{->}[r] & R^i\otimes_AB \ar@{->}[r] & 0 \\
0 & & &	} \]
has exact rows and columns since the (graded) tensor product is right exact. By diagram chasing, it immediately follows the surjectivity of
\[ S^i\to R^i\times_{R^i\otimes_AB}(S^i\otimes_AB). \]
If moreover $p$ is a weak equivalence, then 
\[ R\times_{R\otimes_AB}(S\otimes_AB) \to R \]
is so, since trivial fibrations are stable under pullbacks. The statement follows by the \emph{2 out of 3} axiom.
\end{proof} 
\end{lemma}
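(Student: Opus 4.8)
The plan is to test directly the degreewise description of (trivial) fibrations in $\CDGA_A^{\le 0}$: a map is a fibration exactly when it is surjective in strictly negative degrees, and a trivial fibration exactly when it is a surjective quasi-isomorphism. Write $J$ for the kernel of $A\to B$, so that $S\otimes_A B=S/JS$ and $R\otimes_A B=R/JR$. The comparison map fits into the commutative square whose horizontal arrows are the reductions $S\to S/JS$, $R\to R/JR$ and whose vertical arrow is $p$. For such a square with surjective horizontal arrows, the induced map $S\to R\times_{R/JR}(S/JS)$ is surjective in a given degree if and only if $p$ carries $JS$ onto $JR$ in that degree; so the whole surjectivity assertion reduces to the surjectivity of the map on kernels $JS\to JR$ induced by $p$.

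To produce the relevant exact sequences I would tensor the short exact sequence of $A$-modules $0\to J\to A\to B\to 0$ with $S$ and with $R$. Since $-\otimes_A B$ (and $-\otimes_A J$) is right exact, as recorded at the beginning of this section, this yields in each degree $i\le 0$ a commutative ladder with exact rows comparing $JS,S,S/JS$ to $JR,R,R/JR$, the vertical maps being induced by $p$. A snake-lemma / diagram chase then converts the surjectivity of $p$ into the surjectivity of the comparison map: for a fibration one feeds in only the surjectivity of $p$ in strictly negative degrees and concludes surjectivity of the comparison map in strictly negative degrees; for a trivial fibration $p$ is surjective in all degrees, giving the comparison map surjective in all degrees.

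For the trivial-fibration case it remains to check that the comparison map is a weak equivalence. Here I would note that the projection $R\times_{R\otimes_A B}(S\otimes_A B)\to R$ is the base change of $S\otimes_A B\to R\otimes_A B$ along $R\to R\otimes_A B$. Since $p$ is a trivial fibration, $S\otimes_A B\to R\otimes_A B$ is again a trivial fibration, and trivial fibrations are stable under pullback, so the projection is a trivial fibration, in particular a weak equivalence. Applying the \emph{2 out of 3} axiom to the factorisation $S\to R\times_{R\otimes_A B}(S\otimes_A B)\to R$ of $p$ shows the comparison map is a weak equivalence; combined with the surjectivity already obtained, it is a trivial fibration.

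I expect the surjectivity diagram chase to be the main obstacle, precisely because of the grading. A fibration need not be surjective in degree $0$, so one must arrange that the chase never appeals to surjectivity of $p$ in degree $0$; the delicate contributions are those to $JR$ in a negative total degree that come from the degree-$0$ part of $R$ (terms $J^{i}R^{0}$ with $i<0$), for which the obvious lift would require a preimage of an element of $R^{0}$. The safe way to control this is to run the argument one homogeneous component at a time, exploiting the exactness furnished by the right exactness of $-\otimes_A B$ rather than manipulating the graded objects globally, and to lift such terms through the total negative degree (where $p$ is surjective) rather than through degree $0$.
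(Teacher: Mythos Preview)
Your strategy matches the paper's: a degreewise chase for surjectivity via right exactness of $-\otimes_A B$, then pullback-stability of trivial fibrations plus \emph{2 out of 3}. Both steps, however, hide issues that the paper's own proof also glosses over. In the fibration case your closing paragraph locates the obstruction but does not remove it: surjectivity of the comparison map in degree $i<0$ reduces to surjectivity of $(JS)^i\to(JR)^i$, and when $J$ has negative-degree parts this genuinely draws on $R^0$. With $A=\K[\epsilon]/(\epsilon^2)$, $\deg\epsilon=-1$, $B=\K$, $R=A[x]$ ($\deg x=0$), $S=A[z_n:n\ge 1]$ ($\deg z_n=-1$, $dz_n=0$), and $p(z_n)=\epsilon x^n$, the map $p$ is a fibration, yet $(\epsilon,\bar z_1)$ in degree $-1$ of the fibre product has no preimage in $S^{-1}$ (any candidate $\alpha\epsilon+z_1$ maps to $\epsilon(\alpha+x)\neq\epsilon$). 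Your proposed fix, ``lift through the total negative degree,'' is exactly what fails here. The paper's diagram entries $S^i\otimes_A J$ only make literal sense for $A$ concentrated in degree $0$; in that situation the column \emph{is} surjective and both your chase and the paper's succeed.

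In the trivial-fibration case, your claim that $S\otimes_A B\to R\otimes_A B$ is again a trivial fibration is not automatic: pushout along the non-cofibration $A\to B$ need not preserve weak equivalences. Take $A=\K[\epsilon]/(\epsilon^2)$ with $\deg\epsilon=0$, $B=\K$, $S=A[u,v]$ with $du=\epsilon$, $dv=\epsilon u$, and $R=\K$; then $S\to\K$ is a trivial fibration, but its reduction $\K[u,v]\to\K$ carries the zero differential and is not a quasi-isomorphism, so the comparison map $S\to R\times_{R\otimes_A\K}(S\otimes_A\K)\cong S/\epsilon S$ is not a weak equivalence. Under the extra hypothesis that $S$ and $R$ are $A$-flat, Lemma~\ref{lem.preservaweak} supplies the missing step; without it, neither your argument nor the paper's pullback sentence is complete.
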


\begin{theorem}\label{thm.slashboxes}
Let $f\colon A\to B$ be a surjective morphism in $\DGArt_{\K}^{\le 0}$. Consider a commutative diagram of solid arrows
\[ \xymatrix{	P_A \ar@{->}[dr]_{g_A} \ar@{->}[dd] \ar@{->}[rr] & & S_A \ar@{-}[d] \ar@{->}[dr]^{p_A} & \\
 & Q_A \ar@{->}[dd] \ar@{->}[rr] \ar@{-->}[ur]|-{h_A} & \ar@{->}[d] & R_A \ar@{->}[dd] \\
P_B \ar@{->}[dr]_{g_B} \ar@{-}[r] & \ar@{->}[r] & S_B \ar@{->}[dr]^{p_B} & \\
 & Q_B \ar@{->}[rr] \ar@{->}[ur]|-{h_B} & & R_B	} \]
in $\CDGA_A^{\le 0}$, where the upper square reduces to the bottom square applying $f_{\ast}$, and moreover the map $g_A$ is a cofibration (respectively: trivial cofibration) and the map $p_A$ is a trivial fibration (respectively: fibration). Then there exists a (dashed) lifting $h_A\colon Q_A\to S_A$ which reduces to $h_B$.
\begin{proof}
Consider the commutative diagram
\[ \xymatrix{	Q_A \ar@{-->}[r]_-{\varphi} \ar@{->}[d] \ar@/^1pc/@{->}[rr] & S_B \times_{R_B} R_A \ar@{->}[d] \ar@{->}[r] & R_A \ar@{->}[d] \\
Q_B \ar@{->}[r]^{h_B} \ar@/_1pc/@{->}[rr] & S_B \ar@{->}[r]^{p_B} & R_B	} \]
in $\CDGA_A^{\le 0}$, where the dashed morphism $\varphi\colon Q_A\to S_B \times_{R_B} R_A$ is given by the universal property of the pullback, which also ensures the existence of a (unique) map $S_A\to S_B\times_{R_B}R_A$ commuting with both $p_A$ and the projection $S_A\to S_B$.
By Lemma~\ref{lemma.slashboxes}, the commutative square of solid arrows
\[ \xymatrix{	P_A \ar@{->}[r] \ar@{->}[d] & S_A \ar@{->}[d] \\
Q_A \ar@{->}[r]_-{\varphi} \ar@{-->}[ur]^{h_A} & S_B\times_{R_B}R_A	} \]
admits the dashed lifting $h_A\colon Q_A\to S_A$, whence the statement.
\end{proof}
\end{theorem}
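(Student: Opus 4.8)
The plan is to convert the ``reduced lifting'' problem into an ordinary lifting problem in $\CDGA_A^{\le 0}$ against the relative matching map furnished by Lemma~\ref{lemma.slashboxes}. First I would form, in $\CDGA_A^{\le 0}$, the pullback $S_B\times_{R_B}R_A$, recalling that $S_B=S_A\otimes_AB$, $R_B=R_A\otimes_AB$, so that this pullback is exactly $R_A\times_{R_A\otimes_AB}(S_A\otimes_AB)$. The map $p_A\colon S_A\to R_A$ together with the reduction $S_A\to S_B$ agree over $R_B$ (since reduction commutes with $p$, i.e. $p_B$ applied to the reduction of $S_A$ equals the reduction of $p_A$), so by the universal property they induce a canonical comparison map $\pi\colon S_A\to S_B\times_{R_B}R_A$. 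The crucial input is then Lemma~\ref{lemma.slashboxes}, which guarantees that $\pi$ has the same homotopical type as $p_A$: it is a trivial fibration when $p_A$ is a trivial fibration, and a fibration when $p_A$ is a fibration.

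Next I would produce the bottom edge of the new square, namely a map $\varphi\colon Q_A\to S_B\times_{R_B}R_A$, out of the given map $Q_A\to R_A$ and the composite $Q_A\to Q_B\xrightarrow{h_B}S_B$; these agree over $R_B$ because the cube commutes and $p_Bh_B$ equals the bottom map $Q_B\to R_B$, so $\varphi$ exists by the universal property. I would then assemble the commutative square with $g_A\colon P_A\to Q_A$ on the left and $\pi\colon S_A\to S_B\times_{R_B}R_A$ on the right. Commutativity is a routine diagram check: the $R_A$-component reduces to commutativity of the upper square, and the $S_B$-component reduces, via the reduction of $g_A$ and the identity $h_Bg_B=(P_B\to S_B)$, to commutativity of the bottom square. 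Since $g_A$ is a cofibration (resp. trivial cofibration) and $\pi$ is a trivial fibration (resp. fibration), the model-category lifting axiom yields a diagonal $h_A\colon Q_A\to S_A$ with $h_Ag_A=(P_A\to S_A)$ and $\pi h_A=\varphi$.

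Finally I would check that $h_A$ reduces to $h_B$. Composing $\pi h_A=\varphi$ with the projection onto the $S_B$-factor shows that the composite $Q_A\xrightarrow{h_A}S_A\to S_B$ equals $Q_A\to Q_B\xrightarrow{h_B}S_B$; since the left-hand side factors through the reduction $h_A\otimes_AB$, this says $h_A\otimes_AB=h_B$ after precomposition with the surjection $Q_A\twoheadrightarrow Q_B$ (surjective because $A\to B$ is surjective and the graded tensor product is right exact), hence $f_{\ast}(h_A)=h_B$ as required. The one genuinely non-formal ingredient in this argument is the assertion that $\pi$ is a (trivial) fibration, i.e. Lemma~\ref{lemma.slashboxes}; everything else is a purely formal consequence of the universal property of pullbacks and the lifting axioms of the model structure. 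I therefore expect the main obstacle to lie entirely in that lemma, which in turn rests on the right-exactness of $-\otimes_AB$ and a degreewise surjectivity diagram chase together with the \emph{2 out of 3} axiom, rather than in the present deduction.
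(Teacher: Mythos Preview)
Your argument is correct and follows exactly the same strategy as the paper: form the pullback $S_B\times_{R_B}R_A$, use Lemma~\ref{lemma.slashboxes} to identify the comparison map $S_A\to S_B\times_{R_B}R_A$ as a (trivial) fibration, and solve the resulting ordinary lifting problem against $g_A$. You supply more detail than the paper in verifying commutativity of the square and in checking that $h_A$ actually reduces to $h_B$ (via surjectivity of $Q_A\to Q_B$), but the approach is identical.
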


\begin{remark}
Notice that the statement of Theorem~\ref{thm.slashboxes} do not require any flatness hypothesis. This is due to the fact that we already assumed the existence of a fixed map $h_B\colon Q_B\to S_B$. On the other hand, if $P_A, S_A, Q_A, R_A$ are flat objects in $\CDGA_A^{\le 0}$, then by Lemma~\ref{lem.preservaweak} the functor $f_{\ast}$ preserves weak equivalences between them. Therefore the statement of Theorem~\ref{thm.slashboxes} implies that for any dashed lifting $h_B\colon Q_B\to S_B$ in the square
\[ \xymatrix{	P_B \ar@{->}[r] \ar@{->}[d] & S_B \ar@{->}[d] \\
Q_B \ar@{->}[r] \ar@{-->}[ur]^{h_B} & R_B		} \]
given by model category axioms, there exists a lifting $h_A\colon Q_A\to S_A$
\[ \xymatrix{	P_A \ar@{->}[r] \ar@{->}[d] & S_A \ar@{->}[d] \\
Q_A \ar@{->}[r] \ar@{-->}[ur]^{h_A} & R_A		} \]
which reduces to $h_B$ via $f_{\ast}$.
\end{remark}

\bigskip

\subsection{Lifting of trivial idempotents}\label{section.trivialidempotents}

The aim of this section can be explained as follows. Consider a map $A\to B$ in $\CDGA_{\K}^{\le 0}$ together with a commutative diagram of solid arrows
\[ \xymatrix{	P_A \ar@{->}[dr]_{e_A} \ar@{->}[dd] \ar@{->}[rr]^{g_A} & & R_A \ar@{-}[d] \ar@{-->}[dr]^{f_A} & \\
 & P_A \ar@{->}[dd] \ar@{->}[rr]^<<<<<<{g_A} & \ar@{->}[d] & R_A \ar@{->}[dd] \\
P_B \ar@{->}[dr]_{e_B} \ar@{-}[r]^{g_B} & \ar@{->}[r] & R_B \ar@{->}[dr]^{f_B} & \\
 & P_B \ar@{->}[rr]_{g_B} & & R_B	} \]
in $\CDGA_A^{\le 0}$, where $g_A$ is a cofibration, $e_A$ and $f_B$ are trivial idempotents and the arrows in the lower square are obtained applying the functor $-\otimes_AB$ to the ones of the upper square. The goal of this section is to prove the existence of a trivial idempotent $f_A\colon R_A\to R_A$ fitting the diagram above. In other terms, we are looking for a trivial idempotent $f_A$ whose reduction is $f_B$, and such that $f_Ag_A=g_Ae_A$, see Theorem~\ref{thm.liftingidempotents}.

The following example shows that if we do not assume the idempotent $f_B$ to be a weak equivalence, then the lifting problem above may not admit a solution.

\begin{example}\label{example.liftidempotents}
Let $A=\faktor{\K[\varepsilon]}{(\varepsilon^2)}$, let $B=\K$, and consider $R_B=\K[x,y]\in\CDGA_{\K}^{\le 0}$ where $\deg(x)=0$, $\deg(y)=1$, and $dy=0$. Then define $R_A=R_B\otimes_{\K}A$ as a commutative graded algebra, endowed with the differential $d_Ay=\varepsilon x$. Clearly $R_A\otimes_AB = R_B$. Moreover, consider the (non-trivial) idempotent $f_B\colon R_B\to R_B$ defined by $f_B(x)=x$, $f_B(y)=0$; let $P_A=R_A$ and assume the maps $g_A\colon R_A\to R_A$ and $e_A\colon R_A\to R_A$ to be the identity morphism. By contradiction, assume the existence of an idempotent $f_A\colon R_A\to R_A$ lifting $f_B$; then $f_A$ has to be defined by
\[ f_A\colon \begin{cases}
x \mapsto x + \varepsilon z \\
y \mapsto \varepsilon wy
\end{cases} \]
for some $w,z\in A$. Now notice that the relations
\[ f_A(d_Ay) = f_A(\varepsilon x) = \varepsilon x \qquad \mbox{ and } \qquad d_A f_A(y) = d_A(\varepsilon wy) = 0 \]
imply that such $f_A$ is not a morphism in $\CDGA_A^{\le 0}$ independently of the choice of $w,z$.
\end{example}

The result explained above requires several preliminary results.
Recall that $\CGA^{\le 0}_{\K}$ denotes the category of commutative graded algebras over $\K$ concentrated in non-positive degrees.

\begin{lemma}\label{lemma.liftingidempotents}
Given $A\in\CDGA_{\K}^{\le 0}$, consider a commutative diagram of solid arrows
\[ \xymatrix{	P\ar@{->}[r]^{g} \ar@{->}[d]_{i} & E\ar@{->}[d]^{p} \\
C\ar@{->}[r]_{f} \ar@{-->}[ur] & D	} \]
in $\CDGA_A^{\le 0}$. If $i$ is a cofibration and $p$ is surjective, then there exists the dotted lifting $\gamma\colon C\to E$ in the category $\CGA_{\K}^{\le 0}$.
\begin{proof}
Consider the \textbf{killer algebra} $A[d^{-1}]\in\CDGA_{A}^{\le 0}$. Recall that the natural inclusion $\alpha\colon A\to A[d^{-1}]$ is a morphism of DG-algebras and the natural projection $\beta\colon A[d^{-1}]\to A$ is a morphism of graded algebras; moreover $\beta\alpha$ is the identity on $A$.
Now, the morphism
\[ E\otimes_AA[d^{-1}]\xrightarrow{p\otimes\id} D\otimes_A A[d^{-1}] \]
is a trivial fibration and then there exists a commutative diagram
\[ \xymatrix{	P\ar@{->}[rr]^{\alpha g} \ar@{->}[d]_{i} & & E\otimes_AA[d^{-1}] \ar@{->}[d]^{p\otimes \id} \\
C\ar@{->}[rr]_{\alpha f} \ar@{->}[urr]^{\varphi} & & D\otimes_AA[d^{-1}]	} \]
in $\CDGA_A^{\le 0}$. It is now sufficient to take $\gamma = \beta\varphi$.
\end{proof}
\end{lemma}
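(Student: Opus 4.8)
The plan is to reduce the graded lifting problem to a genuine lifting problem in the model category $\CDGA_A^{\le 0}$ by temporarily replacing the surjection $p$ with a trivial fibration. The obstruction to lifting directly is that $p$ is only assumed surjective, so against the cofibration $i$ the model axioms give nothing in $\CDGA_A^{\le 0}$; the point is that the conclusion is required only in the graded category $\CGA_{\K}^{\le 0}$, which leaves room for a construction that ignores differentials.

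First I would bring in the killer algebra $A[d^{-1}]$, with its DG-inclusion $\alpha\colon A\to A[d^{-1}]$ and its retraction $\beta\colon A[d^{-1}]\to A$, the latter being only a morphism of graded algebras, and with $\beta\alpha=\id_A$. Tensoring over $A$ with $E$ and $D$, I would use the computation recalled before Lemma~\ref{lem.wcofincdga}, namely that $A[d^{-1}]\otimes_A M$ is the mapping cone of $\id_M$ and hence acyclic for every $A$-module $M$. Applied to the underlying complexes this shows that both $E\otimes_A A[d^{-1}]$ and $D\otimes_A A[d^{-1}]$ are acyclic, so the surjection $p\otimes\id$ between them is automatically a quasi-isomorphism, that is, a trivial fibration in $\CDGA_A^{\le 0}$.

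Next, composing $g$ and $f$ with the inclusions induced by $\alpha$ turns the original square into a lifting problem of the cofibration $i$ against the trivial fibration $p\otimes\id$; the lifting axiom then produces a genuine DG-morphism $\varphi\colon C\to E\otimes_A A[d^{-1}]$ satisfying $\varphi i=\alpha g$ and $(p\otimes\id)\varphi=\alpha f$ (here $\alpha$ denotes also the induced maps $E\to E\otimes_A A[d^{-1}]$ and $D\to D\otimes_A A[d^{-1}]$). I would then define $\gamma=\beta\varphi$, using the retraction $\beta$ tensored with $E$. Since $\varphi$ is a DG-morphism and $\beta$ is a morphism of graded algebras, $\gamma$ is a morphism in $\CGA_{\K}^{\le 0}$, and the two triangles follow from $\beta\alpha=\id$ together with the naturality of $\beta$: one obtains $\gamma i=\beta\alpha g=g$ and, moving $\beta$ across $p\otimes\id$, $p\gamma=\beta(p\otimes\id)\varphi=\beta\alpha f=f$.

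The hard part is conceptual rather than computational: the retraction $\beta$ does not commute with the differential, so $\gamma$ is manufactured as a graded map and there is no reason for it to be a chain map. This is exactly why the statement is phrased in $\CGA_{\K}^{\le 0}$ and not in $\CDGA_{\K}^{\le 0}$, and Example~\ref{example.liftidempotents} shows that the corresponding DG-lifting genuinely fails. The remaining verifications — acyclicity of the tensored objects and the two commutativity identities — are routine once the killer-algebra device is in place.
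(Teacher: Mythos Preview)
Your proposal is correct and follows essentially the same route as the paper: tensor with the killer algebra $A[d^{-1}]$ to turn the surjection $p$ into a trivial fibration, lift in $\CDGA_A^{\le 0}$, then retract along $\beta$ to land in $\CGA_{\K}^{\le 0}$. You have in fact supplied more detail than the paper does, in particular the justification that $p\otimes\id$ is a trivial fibration (via acyclicity of both sides) and the verification of the two triangle identities.
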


\begin{proposition}[Algebraic lifting of idempotents]\label{prop.liftingidempotents}
Let $i\colon A\to P$ be a morphism in $\CGA^{\le 0}_{\K}$, and $J\subseteq A$ a graded ideal satisfying $J^2=0$. Moreover, consider a morphism $g\colon P\to P$ together with an idempotent $e\colon A\to A$ in $\CGA^{\le 0}_{\K}$ such that $e(J)\subseteq J$ and $gi=ie$. Denote by
\[ \bar{g}\colon P/i(J)P\to P/i(J)P \]
the factorization to the quotient, and assume that $\bar{g}^2=\bar{g}$. 
Then there exists a morphism $f\colon P\to P$ in $\CGA^{\le 0}_{\K}$ such that $f^2=f$, $fi=ie$, and $\bar{f}=\bar{g}$, i.e. $f\equiv g$ $\pmod{i(J)P}$.
\begin{proof}
First notice that the condition $gi=ie$ implies that $g(i(J)P)\subseteq i(e(J))g(P)\subseteq i(J)P$, so that the induced morphism $\bar{g}$ is well defined. For notational convenience, in the rest of the proof we shall write $JP$ in place of $i(J)P$, since no confusion occurs. Notice that for every $x\in JP$ we have $g^2(x)=g(x)$; in fact take $x=i(a)p$, with $a\in J$ and $p\in P$, then 
\[ g^2(i(a)p)-g(i(a)p) = i(e^2(a))g^2(p)-i(e(a))g(p) = i(e(a))(g^2(p)-g(p))\in J^2P=0 \]
since by assumption $g^2(p)-g(p)\in JP$.
Now denote by $\phi=g^2-g\colon P\to P$. By hypothesis we have
\[ \phi i = g^2i - gi = gie-ie = ie^2-ie = 0 \, ,\]
\[ \phi(P)\subseteq JP,  \mbox{ and } g\phi=\phi g \, .\]
Notice that the morphism $\phi$ is a $g$-derivation of degree $0$; in fact for every $p, q\in P$
\[ \phi(pq)=g^2(p)g^2(q)-g(p)g(q)=g^2(p)\phi(q)+\phi(p)g(q)=g(p)\phi(q)+\phi(p)g(q),\]
where the last equality follows since $g^2(p)\phi(q) = g(p)\phi(q)$, being $\phi(p)\phi(q)\in J^2P=0$.
Now, define $\psi\colon P\to JP$ as $\psi = \phi - g\phi - \phi g = - g + 3g^2 - 2g^3$, and notice that 
\begin{enumerate}
\item $\psi(J)=0$, $\psi i=0$ because $\phi i=0$,
\item $\psi^2=0$ and $g^2\psi=g\psi=\psi g=\psi g^2$ because $\phi\psi=\psi\phi=0$,
\item $\psi$ is a $g$-derivation,
\item $\psi - g\psi - \psi g = \phi - 4g\phi + 4g^2\phi = \phi + 4\phi^2 = \phi$.
\end{enumerate}
In particular,
\[ (g+\psi)^2-(g+\psi)=g^2+g\psi+\psi g+\psi^2-g-\psi=\phi+g\psi+\psi g-\psi=0 \]
and
\[ (g+\psi)i = 3g^2i-2g^3i = 3ie^2-2ie^3 = 3ie-2ie = ie \, .\] 
Therefore, to obtain the statement it is sufficient to define $f=g+\psi=3g^2-2g^3$, which is a morphism in $\CGA^{\le 0}_{\K}$ satisfying the required properties.
\end{proof}
\end{proposition}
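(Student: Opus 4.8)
The plan is to exploit the single structural fact that makes everything work: since $J^2=0$ in $A$, the ideal $i(J)P\subseteq P$ is square-zero, because $(i(J)P)^2\subseteq i(J^2)P=0$. Thus the problem is exactly that of lifting the idempotent $\bar g$ along the square-zero extension $P\to P/i(J)P$, and the natural candidate is the classical Freudenthal--Newton formula $f:=3g^2-2g^3$. Two things must then be verified: that $f$ is genuinely idempotent, and --- the delicate point --- that $f$ is an algebra homomorphism and not merely a $\K$-linear endomorphism (it is automatically $\K$-linear, being a polynomial in the additive map $g$).

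First I would record the properties of the defect $\phi:=g^2-g$. From $gi=ie$ and $e(J)\subseteq J$ one gets $\phi i=0$ and $g(i(J)P)\subseteq i(J)P$, while the idempotency of $\bar g$ gives $\phi(P)\subseteq i(J)P$. Writing $JP$ for $i(J)P$, the square-zero condition forces $\phi$ to vanish on $JP$, whence $\phi^2=\phi\circ\phi=0$ as an endomorphism, and $g$ commutes with $\phi$. These facts confine all idempotency bookkeeping to computations inside the commutative ring $\K[g,\phi]/(\phi^2)$.

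The homomorphism property is where the $g$-derivation structure is essential, and this is the step I expect to be the main obstacle, since a naive polynomial lift of an idempotent generally leaves $\CGA^{\le 0}_{\K}$. The key observation I would isolate is the following: if $\delta\colon P\to P$ is a degree-$0$ $g$-derivation with $\delta(P)\subseteq JP$, then $g+\delta$ is an algebra homomorphism, because $(g+\delta)(pq)-(g+\delta)(p)(g+\delta)(q)=-\delta(p)\delta(q)\in (JP)^2=0$. A direct expansion (using $\phi(p)\phi(q)\in(JP)^2=0$) shows that $\phi$ itself is a $g$-derivation, and then that $\psi:=\phi-g\phi-\phi g=-g+3g^2-2g^3$ is again a $g$-derivation landing in $JP$ --- here one uses $g(JP)\subseteq JP$ together with $\phi^2=0$ to absorb the discrepancy between $g$ and $g^2$. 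Consequently $f=g+\psi=3g^2-2g^3$ is a homomorphism.

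Finally I would dispatch idempotency by the commutative computation: writing $f=g+\phi-2g\phi$ and using $\phi^2=0$, $g^2=g+\phi$, one finds $f^2=g^2+2g\phi-4g^2\phi=(g+\phi)+2g\phi-4g\phi=f$. The remaining identities are immediate: $fi=ie$ follows from $gi=ie$ and $e^2=e$, while $\bar f=\bar g$ holds because $\psi(P)\subseteq JP$. In summary, idempotency is the standard square-zero computation, and the entire weight of the proposition rests on keeping $f$ multiplicative, which is rescued only by combining the $g$-derivation property of $\phi$ with the vanishing $(JP)^2=0$.
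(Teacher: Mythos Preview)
Your proposal is correct and follows essentially the same approach as the paper: both set $f=3g^2-2g^3=g+\psi$ with $\psi=\phi-g\phi-\phi g$, establish that $\phi=g^2-g$ is a $g$-derivation into the square-zero ideal $JP$, deduce that $\psi$ is again a $g$-derivation (so $g+\psi$ is multiplicative), and then check idempotency via $\phi^2=0$. Your framing via the lemma ``$g+\delta$ is a homomorphism whenever $\delta$ is a $g$-derivation with values in $JP$'' makes the multiplicativity step more transparent than the paper's direct verification, but the content is identical.
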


\begin{remark}
The previous result actually holds even if we replace $\CGA^{\le 0}_{\K}$ with the category of unitary graded commutative rings.
\end{remark}

For every morphism $A\to B$ in $\DGCA_{\K}^{\le 0}$ and every $M\in \DGMod(B)$ we shall denote by 
$\Der^*_A(B,M)$ the differential graded $B$-module of $A$-derivations $B\to M$. 
For every pair of morphisms $A\to B\xrightarrow{f} C$ of commutative differential graded algebras we shall denote by $\Der^*_A(B,C;f)$ the module of derivations, where the $B$-module structure on $C$ is induced by the morphism $f$

\begin{remark}\label{rem.propertyofdef}
In the sequel we shall use in force the following basic facts:
\begin{enumerate}

\item for every cofibrant $A$-algebra $B\in \DGCA_{A}^{\le 0}$ and every surjective quasi-isomorphism $M\to N$ in 
$\DGMod(B)$ the induced map $\Der^*_A(B,M)\to \Der^*_A(B,N)$ is a surjective quasi-isomorphism;

\item for every weak equivalence $B\to C$ of cofibrant objects in $\DGCA_{A}^{\le 0}$ and every $M\in \DGMod(C)$ the induced map $\Der^*_A(C,M)\to \Der^*_A(B,M)$ is a weak equivalence.
\end{enumerate} 

The above properties are well known \cite[Sec.7]{Hin} and in any case easy to prove as the consequence of the following straightforward facts:

\begin{itemize} 

\item A morphism in $\DGCA_{\K}^{\le 0}$ is a weak equivalence (resp.: cofibration, trivial fibration) if and only if it is a weak equivalence (resp.: cofibration, trivial fibration) as a morphism in  $\DGCA_{\K}$;

\item for every integer $n$ there exists a natural bijection between $Z^n(\Der^*_A(B,M))$ and the set of liftings in the  
obvious commutative solid diagram 
\[ \xymatrix{A\ar[d]\ar[r]&B\oplus M[n]\ar[d]\\
B\ar@{-->}[ur]\ar[r]_{\id_B}&B}\]
in $\DGCA_{\K}$;
\item for every integer $n$ there exists a natural bijection between $\Der^n_A(B,M)$ and  the set of liftings in the  
obvious commutative solid diagram 
\[ \xymatrix{A\ar[d]\ar[r]&B\oplus \cone(\id_{M[n-1]})\ar[d]\\
B\ar@{-->}[ur]\ar[r]_{\id_B}&B}\;;\]
in $\DGCA_{\K}$, and the differential of  $\Der^*_A(B,M)$ is induced (up to sign) by the natural morphisms of $B$-modules
\[ \cone(\id_{M[n-1]})\to M[n]\to \cone(\id_{M[n]})\,.\]

\end{itemize}
\end{remark}

\begin{lemma}\label{lem.Dacyclic}
Consider a morphism of retractions in $\CDGA_{\K}^{\le 0}$
\[ \xymatrix{	Q\ar@{->}[r]^{j}\ar@{->}[d] & P\ar@{->}[r]^{q}\ar@{->}[d] & Q\ar@{->}[d] \\
S\ar@{->}[r]^{i} & R\ar@{->}[r]^{p} & S		} \]
and define $f = ip\colon R\to R$ and $e=jq\colon P\to P$. Let $\alpha\in\Der^{\ast}_P(R, R; f)$ and $\beta\in\Der^{\ast}_Q(S,S)$ be derivations such that the diagram
\[ \xymatrix{	R\ar@{->}[r]^{p} \ar@{->}[d]^{\alpha} & S\ar@{->}[r]^{i} \ar@{->}[d]^{\beta} & R\ar@{->}[d]^{\alpha} \\
R\ar@{->}[r]^{p} & S\ar@{->}[r]^{i} & R	} \]
commutes. Then $i\beta p\in\Der^{\ast}_P(R, R; f)$ and, setting $\gamma =\alpha-2i\beta p$ we have
\[ \gamma - \gamma f - f\gamma = \alpha. \]
Conversely, given any $\gamma\in\Der^{\ast}_P(R, R; f)$, the $P$-linear $f$-derivation $\alpha = \gamma - \gamma f - f\gamma$ satisfies
\[ \alpha(\ker(p)) \subseteq \ker(p), \qquad \alpha(i(S)) \subseteq i(S) \]
and factors through a derivation $\beta\colon S\to S$ as above.
\end{lemma}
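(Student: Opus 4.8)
The plan is to reduce the whole statement to formal manipulations with the idempotent $f=ip$ and the two intertwining squares, so that no structural property of $\CDGA_{\K}^{\le 0}$ beyond ``$i,p$ are algebra morphisms'' is needed. Denote by $u\colon P\to R$ and $v\colon Q\to S$ the middle and outer vertical maps of the retraction; the retraction conditions are $qj=\id_Q$, $pi=\id_S$, and the two commutative squares read $uj=iv$ and $vq=pu$. Since $i,p,u,v,j,q$ are algebra morphisms while $\alpha,\beta,\gamma$ are derivations, the commutativity of the $\alpha,\beta$ square is exactly the pair of identities $p\alpha=\beta p$ and $\alpha i=i\beta$; moreover $\beta$ being a $Q$-derivation means $\beta v=0$, and $\alpha$ being a $P$-linear $f$-derivation means $\alpha u=0$. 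I will use repeatedly that $f=ip$ is the algebra projector onto $i(S)$ with kernel $\ker p$, so that $f^2=f$, $fi=i$, $pf=p$, and $\Image f=i(S)$, $\ker f=\ker p$ (the latter because $pi=\id_S$ forces $i$ mono and $p$ epi).

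\textbf{Forward direction.} First I would check that $\delta:=i\beta p$ lies in $\Der^*_P(R,R;f)$. The graded Leibniz rule is immediate: since $p(xy)=p(x)p(y)$ and $i$ is an algebra map, applying $i\beta p$ to $xy$ produces exactly the factors $ip(x)=f(x)$ and $ip(y)=f(y)$ multiplying the two $\beta$-terms, which is precisely the Leibniz rule for an $f$-derivation; and $P$-linearity follows from $\delta u=i\beta(pu)=i\beta(vq)=i(\beta v)q=0$. Next, combining $p\alpha=\beta p$ and $\alpha i=i\beta$ with $f=ip$ gives $f\alpha=ip\alpha=i\beta p=\delta$ and $\alpha f=\alpha ip=i\beta p=\delta$, while $pi=\id_S$ yields $f\delta=\delta f=\delta$. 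Substituting $\gamma=\alpha-2\delta$ then gives $\gamma f=\alpha f-2\delta f=-\delta$ and $f\gamma=f\alpha-2f\delta=-\delta$, whence $\gamma-\gamma f-f\gamma=(\alpha-2\delta)+\delta+\delta=\alpha$, as claimed.

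\textbf{Converse.} Here the pivotal computation is $f\alpha=\alpha f=-f\gamma f$: expanding $\alpha=\gamma-\gamma f-f\gamma$ and using $f^2=f$ makes every term except $-f\gamma f$ cancel on either side. (That $\alpha$ is itself a $P$-linear $f$-derivation is automatic, since $\gamma f$ and $f\gamma$ are $f$-derivations because $f$ is an algebra endomorphism with $f^2=f$, and $\alpha u=0$ follows from $fu=ue$ and $\gamma u=0$.) Thus $\alpha$ commutes with the idempotent $f$, and a linear map commuting with an idempotent preserves both its image and its kernel; since $\Image f=i(S)$ and $\ker f=\ker p$, this yields $\alpha(i(S))\subseteq i(S)$ and $\alpha(\ker p)\subseteq\ker p$. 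Finally I would define $\beta:=p\alpha i\colon S\to S$. Using $fi=i$ (so $f$ restricts to the identity on $i(S)$) together with the Leibniz rule for $\alpha$ and the fact that $i,p$ are algebra maps, one checks directly that $\beta$ is a derivation, and $\beta v=p\alpha(iv)=p\alpha(uj)=p(\alpha u)j=0$ shows it is a $Q$-derivation. The factorization relations then read $i\beta=(ip)\alpha i=f\alpha i=\alpha i$ (the last equality because $\alpha(i(S))\subseteq\Image f$) and $\beta p=p\alpha ip=p\alpha f=pf\alpha=p\alpha$ (using $\alpha f=f\alpha$ and $pf=p$), so $\beta$ fits the required square.

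The computations are entirely formal and degree-preserving; the only point requiring care is the sign bookkeeping in the graded Leibniz rule, but since $i,p,u,v$ are degree-$0$ algebra morphisms they introduce no signs, so the verifications that $i\beta p$ and $p\alpha i$ are genuine graded $f$- and $Q$-derivations go through verbatim. I expect the main (mild) obstacle to be purely organizational: arranging the intertwining identities $p\alpha=\beta p$, $\alpha i=i\beta$ and the idempotent relations $fi=i$, $pf=p$, $pi=\id_S$ so that the clean formulas $f\alpha=\alpha f=i\beta p$ (forward) and $f\alpha=\alpha f=-f\gamma f$ (converse) drop out without computational error.
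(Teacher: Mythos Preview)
Your proof is correct and follows essentially the same approach as the paper's: both reduce everything to formal manipulations with the idempotent $f=ip$ and the intertwining relations $p\alpha=\beta p$, $\alpha i=i\beta$, arriving at the key identities $f\alpha=\alpha f=i\beta p$ (forward) and $f\alpha=\alpha f=-f\gamma f$ (converse). Your organization is slightly cleaner in the converse direction, since you explicitly observe that $\alpha$ commutes with the idempotent $f$ and deduce invariance of $\ker f=\ker p$ and $\Image f=i(S)$ in one stroke, whereas the paper verifies these two inclusions separately; but the content is the same.
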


\begin{proof}
Observe that $i\beta p$ is an $f$-derivation being $f=ip$. Moreover, since $pi=\id$ we have
\[ \begin{aligned}
\gamma - \gamma f - f\gamma &= \alpha - 2i\beta p - \alpha ip + 2i\beta pip - ip\alpha + 2ipi\beta p =\\
&= \alpha - 2i\beta p + 2i\beta p + 2i\beta p - 2\alpha ip = \alpha.
\end{aligned} \]
Conversely, take $\gamma\in\Der^{\ast}_P(R, R; f)$ and define $\alpha = \gamma - \gamma f - f\gamma$. Now, observe that $\ker(p)=\ker(f)$, and since
\[ f\alpha(x) = f\gamma(x) - f^2\gamma(x) - \gamma f(x) = \gamma f(x) \]
we have $\alpha(\ker(p)) \subseteq \ker(p)$. Similarly, since $i(S) = f(R)$ the chain of equalities
\[ \alpha f = \gamma f - \gamma f^2 - f\gamma f = -f\gamma f \]
implies that $\alpha(i(S)) \subseteq i(S)$. Notice that $\beta = p\alpha i = -p\gamma i$, so that $\alpha f = i\beta p$. To conclude the proof recall that the restriction of $f$ to $S$ is the identity, therefore $\beta$ is a $P$-linear derivation.
\end{proof}

\begin{proposition}\label{prop.Dacyclic}
Let $e\colon P\to P$ and $f\colon R\to R$ be trivial idempotents in $\CDGA_{\K}^{\le 0}$, and consider a cofibration $g\colon P\to R$ in $\CDGA_{\K}^{\le 0}$ such that $ge=fg$. Then the subcomplex
\[ D = \left\{ \gamma\in\Der^{\ast}_P(R,R;f) \,\,\vert\,\, \gamma = f\gamma + \gamma f \right\} \subseteq \Der^{\ast}_P(R,R;f) \]
is acyclic.
\begin{proof}
We can write $f = ip$ and $e=jq$ for a morphism between retractions in $\CDGA_{\K}^{\le 0}$
\[ \xymatrix{	Q\ar@{->}[r]^{j}\ar@{->}[d]_{\bar{g}} & P\ar@{->}[r]^{q}\ar@{->}[d]^{g} & Q\ar@{->}[d]^{\bar{g}} \\
S\ar@{->}[r]^{i} & R\ar@{->}[r]^{p} & S		} \]
where both $g$ and $\bar{g}$ are cofibrant objects.
The pushout of $\bar{g}$ along $j$ gives an extension of the diagram above to
\[ \xymatrix{	Q\ar@{->}[r]^{j}\ar@{->}[d]_{\bar{g}} & P\ar@{->}[d]^{\tilde{g}}\ar@{->}[rr]^{q}\ar@{->}[dr]^{g} & & Q\ar@{->}[d]^{\bar{g}} \\
S\ar@/_2pc/@{->}[rr]_{i} \ar@{->}[r]^{\tilde{i}} & S\otimes_QP\ar@{->}[r]^{\tau} & R\ar@{->}[r]^{p} & S		} \]
in $\CDGA_{\K}^{\le 0}$. Since $i$ and $p$ are retracts of $f$, they are weak equivalences; in particular $p$ is a trivial fibration. The same holds for $j$ and $q$, so that $\tilde{i}$ is a weak equivalence. It then follows that $\tau$ is a weak equivalence between cofibrant objects in $\CDGA_P^{\le 0}$. By Lemma~\ref{lem.Dacyclic} there exists a short exact sequence
\[ 0\to D\to \Der^{\ast}_P(R,R;f)\xrightarrow{\gamma\mapsto (\gamma f+f\gamma-\gamma,p\gamma i)} K\to 0\]
of DG-modules over $R$, where
\[ K = \left\{ (\alpha,\beta)\in\Der^{\ast}_P(R,R;f)\times\Der^{\ast}_Q(S,S) \,\,\vert\,\, \beta p = p\alpha, i\beta = \alpha i \right\}. \]
Since $p$ is a trivial fibration and $R$ is cofibrant, the map
\[ \begin{aligned}
p_{\ast}\colon \Der^{\ast}_P(R, R; f) &\to \Der^{\ast}_P(R, S; pf) \\
\gamma &\mapsto p\gamma
\end{aligned} \]
is a trivial fibration by Remark~\ref{rem.propertyofdef}; here we should think of $S$ as an object in $\CDGA_P^{\le 0}$ via the map $\bar{g}q\colon P\to S$. Now recall that $pf=p$, and since $\tau$ is a weak equivalence between cofibrant objects in $\CDGA_P^{\le 0}$, then the map
\[ \begin{aligned}
\tau^{\ast}\colon \Der^{\ast}_P(R, S; pf)=\Der_P^{\ast}(R, S; p) &\to \Der^{\ast}_P(S\otimes_QP, S; p\tau) =\Der_Q^{\ast}(S, S; \id) \\
\gamma &\mapsto \gamma \tau
\end{aligned} \]
is a weak equivalence.
Therefore, in order to prove the statement it is sufficient to prove that also the projection $K\to \Der^{\ast}_Q(S,S)$ is a weak equivalence. Since every $\beta\in\Der^{\ast}_Q(S,S)$ lifts to $(i\beta p, \beta)\in K$, we have a short exact sequence
\[ 0\to H\to K\to \Der^{\ast}_Q(S,S)\to 0, \]
where
\[ H = \left\{ \alpha\in\Der^{\ast}_P(R,R; f) \,\,\vert\,\, \alpha i = p\alpha = 0 \right\} = \left\{ \alpha\in\Der^{\ast}_P(R, \ker\{p\}) \,\,\vert\,\, \alpha i = 0\right\}, \]
where the $R$-module structure on $\ker\{p\}$ is induced via the morphism $f$. Therefore we have a short exact sequence
\[ 0\to H\to \Der^{\ast}_P(R,\ker\{p\}) \xrightarrow{\tilde{i}^{\ast}}  \Der^{\ast}_P(S\otimes_QP,\ker\{p\}) = \Der^{\ast}_Q(S,\ker\{p\}) \to 0 \]
and  the map $\tilde{i}^{\ast}$ is a trivial fibration. 
It follows that $H$ is an acyclic complex, so that the projection $K\to \Der^{\ast}_Q(S,S)$ is a weak equivalence as required.
\end{proof}
\end{proposition}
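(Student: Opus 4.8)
The plan is to exploit the retraction presentation of the two trivial idempotents furnished by Lemma~\ref{prop.fixedloci}, and then reduce the acyclicity of $D$ to the homotopy invariance of derivation modules recorded in Remark~\ref{rem.propertyofdef}. Concretely, I would first write $f=ip$ and $e=jq$ for retractions $S\xrightarrow{i}R\xrightarrow{p}S$ and $Q\xrightarrow{j}P\xrightarrow{q}Q$ in which, since $e$ and $f$ are trivial idempotents, all four structure maps $i,p,j,q$ are weak equivalences. The hypothesis $ge=fg$ forces $g$ to descend to a map $\bar{g}\colon Q\to S$, producing a morphism of retractions; the factorization axioms then allow both $g$ and $\bar{g}$ to be taken as cofibrations between cofibrant objects, which is precisely what makes the derivation machinery of Remark~\ref{rem.propertyofdef} applicable.

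The next step is to package the situation in the short exact sequence of $R$-modules provided by Lemma~\ref{lem.Dacyclic},
\[ 0\to D\to \Der^{\ast}_P(R,R;f)\xrightarrow{\ \gamma\mapsto(\gamma f+f\gamma-\gamma,\,p\gamma i)\ } K\to 0, \]
where $K=\{(\alpha,\beta):\beta p=p\alpha,\ i\beta=\alpha i\}$, so that $D$ is exactly the kernel. By the long exact cohomology sequence it then suffices to prove that the quotient map $\Der^{\ast}_P(R,R;f)\to K$ is a quasi-isomorphism, and the cleanest route is to compare both its source and its target with the single complex $\Der^{\ast}_Q(S,S)$ through the evident compatible maps, so that a triangle of weak equivalences forces the remaining edge to be one as well.

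To handle the source I would factor the comparison as $\gamma\mapsto p\gamma\mapsto p\gamma\tau$. The first arrow $p_{\ast}\colon \Der^{\ast}_P(R,R;f)\to \Der^{\ast}_P(R,S;p)$ is a trivial fibration because $p$ is a surjective quasi-isomorphism and $R$ is cofibrant, by the first item of Remark~\ref{rem.propertyofdef} (here $pf=p$ since $pi=\id$). The second arrow is restriction along the weak equivalence $\tau\colon S\otimes_Q P\to R$ of cofibrant $P$-algebras obtained by pushing out $\bar{g}$ along $j$, which is a weak equivalence by the second item of the same remark, after the canonical identification $\Der^{\ast}_P(S\otimes_Q P,S;p\tau)=\Der^{\ast}_Q(S,S)$. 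For the target I would use that every $\beta$ lifts to $(i\beta p,\beta)\in K$, giving a short exact sequence $0\to H\to K\to \Der^{\ast}_Q(S,S)\to 0$ with $H=\{\alpha:\alpha i=p\alpha=0\}$; this $H$ is acyclic because it is the kernel of the trivial fibration $\tilde{i}^{\ast}\colon \Der^{\ast}_P(R,\ker p)\to \Der^{\ast}_Q(S,\ker p)$, once more an instance of Remark~\ref{rem.propertyofdef}. Chasing the resulting commutative triangle then yields that $\Der^{\ast}_P(R,R;f)\to K$ is a quasi-isomorphism, whence $D$ is acyclic.

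The step I expect to be most delicate is the construction and verification of the intermediate algebra $S\otimes_Q P$ and the matching of derivation modules across the pushout: one must check that $\tau$ really is a weak equivalence of \emph{cofibrant} $P$-algebras, so that the second item of Remark~\ref{rem.propertyofdef} applies, and that the various module structures on $S$ (induced through $f$, through $p$, and through $\bar{g}q$) are compatible under the adjunction isomorphism $\Der^{\ast}_P(S\otimes_Q P,-)\cong\Der^{\ast}_Q(S,-)$ for base change along $Q\to P$. Keeping track of which structure is in play at each stage is the main bookkeeping hazard; once these identifications are pinned down, the homotopy-invariance inputs do all the real work.
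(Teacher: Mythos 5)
Your proposal is correct and follows the paper's own proof essentially step for step: the same retraction presentation $f=ip$, $e=jq$ with induced $\bar{g}\colon Q\to S$, the same pushout $S\otimes_Q P$ with comparison map $\tau$, the same short exact sequence from Lemma~\ref{lem.Dacyclic}, the same factorization $\gamma\mapsto p\gamma\mapsto p\gamma\tau$ of the comparison with $\Der^{\ast}_Q(S,S)$, and the same acyclic-kernel argument for $H$. One small fix: $\bar{g}$ is a cofibration not by any factorization axiom but because it is a retract of the cofibration $g$ (cofibrations are closed under retracts), and no cofibrancy of $P$ or $R$ over $\K$ is needed anywhere --- Remark~\ref{rem.propertyofdef} is applied in $\CDGA_P^{\le 0}$, where $R$ and $S\otimes_Q P$ are cofibrant precisely because $g$ and the pushout $\tilde{g}$ of $\bar{g}$ are cofibrations.
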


\begin{theorem}[\textbf{Lifting of trivial idempotents}]\label{thm.liftingidempotents}
Let $A\to B$ be a surjective morphism in $\DGArt_{\K}^{\le 0}$. Moreover, consider a cofibration $g_A\colon P_A\to R_A$ between flat objects in $\CDGA_A^{\le 0}$, together with a trivial idempotent $e_A\colon P_A\to P_A$; denote by
\[ g_B\colon P_B = P_A\otimes_A B\to R_A\otimes_AB = R_B \qquad e_B\colon P_B\to P_B \]
the pushout cofibration and the pushout idempotent in $\CDGA_B^{\le 0}$. Moreover, let $f_B\colon R_B\to R_B$ be a trivial idempotent in $\CDGA_B^{\le 0}$ satisfying $f_Bg_B = g_Be_B$.
Then there exists a trivial idempotent $f_A\colon R_A\to R_A$ in $\CDGA_{A}^{\le 0}$ lifting $f_B$ such that $f_Ag_A = g_Ae_A$.
\begin{proof}
We proceed by induction on the length of $A$. First notice that it is not restrictive to assume the morphism $A\to B$ comes from a small extension
\[ 0 \to \K t \to A \to B \to 0 \]
in $\DGArt_{\K}^{\le 0}$, for some cocycle $t$ in the maximal non-zero power of the maximal ideal $\mathfrak{m}_A$.
Notice that  $\K t$ is a complex concentrated in degree $i=\deg(t)$, and $\K t \to A$ is the inclusion. In fact, every surjective map in $\DGArt_{\K}^{\le 0}$ factors in a sequence of small extensions as above.

Since $g_A$ is a cofibration, the diagram of solid arrows
\[ \xymatrix{	P_A \ar@{->}[r]^{e_A} \ar@{->}[d]^{g_A} & P_A \ar@{->}[r]^{g_A} & R_A \ar@{->}[d] \\
R_A \ar@{->}[r] \ar@{-->}[urr]|-{r} & R_B \ar@{->}[r]^{f_B} & R_B 	} \] 
admits the dotted lifting in $\CGA_{\K}^{\le 0}$ by Lemma~\ref{lemma.liftingidempotents}. This means that $f_B$ lifts to a morphism of graded algebras $r\colon R_A\to R_A$ satisfying $rg_A=g_Ae_A$. Moreover, by Proposition~\ref{prop.liftingidempotents} we may assume $r^2 = r$. Now set $P=P_A\otimes_A\K$ and $R=R_A\otimes_A\K$; denote by $d\in\Hom_A^{1}(R_A,R_A)$ the differential of $R_A$. Then
\[ dr - rd = \iota\psi \pi, \mbox{ for some } \psi\in\Der_P^{1}(R, R; f) \]
where $\iota\colon R[-i]\cdot t\to R_A$ is the morphism induced by the small extension while $R_A\xrightarrow{\pi} R$ is the natural projection. It follows that $\psi$ is a cocycle in the complex $D$ of Proposition~\ref{prop.Dacyclic}. In fact, setting $f=f_B\otimes_B\K$, we have $\iota f = r\iota$ and $\pi r = f\pi$ by construction, so that
\[ \iota(d\psi + \psi d)\pi = d(dr - rd) + (dr - rd)d = 0, \]
\[ \iota(f\psi + \psi f)\pi = rdr - r^2d + dr^2 - rdr = dr - rd = \iota\psi \pi. \]
Therefore there exists $h\in\Der^0_P(R,R; f)$ such that
\[dh - hd = \psi, \qquad \qquad fh + hf - h = 0. \]
Setting $f_A = r - \iota h\pi$ we have that $f_A$ is a morphism of graded algebras. Moreover
\[ f_A^2 - f_A = \iota(-fh - hf + h)\pi = 0 \, , \qquad \qquad df_A - f_Ad = \iota(\psi - dh + hd)\pi = 0 \, ,\]
and the image of $\pi g_A$ is contained in $P$, so that $ih\pi g_A=0$ being $h$ a $P$-linear derivation. It follows that $f_A$ is an idempotent in $\CDGA_A^{\le 0}$ satisfying $f_Ag_A=g_Ae_A$. By Corollary~\ref{cor.naka} the morphism $f_A$ is a weak equivalence and the statement follows.
\end{proof}
\end{theorem}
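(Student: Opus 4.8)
The plan is a dévissage on the length of $A$, reducing to the case in which $A\to B$ is a small extension
\[ 0 \to \K t \to A \to B \to 0 \]
with $\K t$ a square-zero ideal concentrated in the single degree $i=\deg(t)$. This is legitimate because every surjection in $\DGArt_{\K}^{\le 0}$ factors as a finite chain of such small extensions, and the two properties we must maintain, namely the reduction $\bar{f_A}=f_B$ and the intertwining $f_Ag_A=g_Ae_A$, are stable under composing the successive liftings; at each stage one feeds the previously-constructed idempotent in as the new boundary datum.

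Over a single small extension I would first forget differentials. Since $g_A$ is a cofibration and the reduction $R_A\to R_B$ is surjective, Lemma~\ref{lemma.liftingidempotents} produces a morphism of graded algebras $r\colon R_A\to R_A$ with $\bar{r}=f_B$ and $rg_A=g_Ae_A$. Because $\K t$ is square-zero, Proposition~\ref{prop.liftingidempotents} then allows me to replace $r$ by $3r^2-2r^3$ so as to arrange $r^2=r$ while keeping both $\bar{r}=f_B$ and $rg_A=g_Ae_A$. Thus $r$ is a graded idempotent lifting $f_B$ and compatible with $e_A$; its only defect is that it need not commute with the differential $d$ of $R_A$.

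Next I would measure that defect. Writing $P=P_A\otimes_A\K$, $R=R_A\otimes_A\K$, $f=f_B\otimes_B\K$, and letting $\iota$ and $\pi$ denote the inclusion of the ideal $tR_A$ and the projection $R_A\to R$ attached to the small extension, the fact that $f_B$ is a DG-morphism forces $dr-rd$ to take values in $tR_A$, so that
\[ dr-rd=\iota\,\psi\,\pi \]
for a unique $\psi\in\Der^{1}_P(R,R;f)$. A direct computation, using $r^2=r$ together with the relations $\iota f=r\iota$ and $\pi r=f\pi$, shows that $d\psi+\psi d=0$ and $\psi=f\psi+\psi f$, so $\psi$ is a cocycle in the subcomplex
\[ D=\{\,\gamma\in\Der^{\ast}_P(R,R;f)\ \vert\ \gamma=f\gamma+\gamma f\,\} \]
of Proposition~\ref{prop.Dacyclic}, whose hypotheses hold after reduction since $e$, $f$ become trivial idempotents by Corollary~\ref{cor.naka} and $fg=ge$. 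That proposition gives acyclicity of $D$, so I may solve $\psi=dh-hd$ with $h\in\Der^{0}_P(R,R;f)$ satisfying $fh+hf=h$. Setting
\[ f_A=r-\iota\,h\,\pi \]
one checks $df_A-f_Ad=\iota(\psi-dh+hd)\pi=0$, $f_A^2-f_A=\iota(h-fh-hf)\pi=0$, and that the correction leaves $f_Ag_A=g_Ae_A$ untouched because $\pi g_A$ lands in $P$ and $h$ is $P$-linear, whence $\iota h\pi g_A=0$. Finally Corollary~\ref{cor.naka} upgrades the DG-idempotent $f_A$ to a \emph{trivial} one, since its reduction $f_B$ is a weak equivalence.

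The main obstacle is the third step: pinning down the obstruction $\psi$ as a cocycle sitting precisely inside the acyclic complex $D$ of Proposition~\ref{prop.Dacyclic}. Everything hinges on the algebraic normalization $r^2=r$ and the intertwinings $\iota f=r\iota$, $\pi r=f\pi$ conspiring to yield both $\psi=f\psi+\psi f$ and $d\psi+\psi d=0$; once $\psi$ is known to live in $D$, the acyclicity result does the correcting essentially for free, and the remaining verifications are routine.
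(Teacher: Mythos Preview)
Your proposal is correct and follows essentially the same route as the paper's proof: reduce to a small extension, lift $f_B$ to a graded idempotent $r$ via Lemma~\ref{lemma.liftingidempotents} and Proposition~\ref{prop.liftingidempotents}, identify the defect $dr-rd=\iota\psi\pi$ as a cocycle in the acyclic complex $D$ of Proposition~\ref{prop.Dacyclic}, and correct $r$ by $\iota h\pi$. The only cosmetic remark is that when you justify that $e,f$ are trivial idempotents over $\K$ you cite Corollary~\ref{cor.naka}, but in fact you are using the easy direction (pushouts of weak equivalences along flat maps remain weak equivalences), which is Lemma~\ref{lem.preservaweak}; this does not affect the argument.
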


\bigskip

\subsection{Lifting of factorizations}\label{section.factorization}

The main goal of this section is to show that for every $A\in\DGArt_{\K}^{\le 0}$, for every flat object $P\in\CDGA_A^{\le 0}$ and for every trivial cofibration $\bar{f}\colon P\otimes_A\K \to \bar{Q}$ in $\CDGA_{\K}^{\le 0}$, there exists a trivial cofibration $f\colon P\to Q$ in $\CDGA_A^{\le 0}$ lifting $\bar{f}$. Actually we shall prove stronger results (see Theorem~\ref{thm.liftingfactorization} and Theorem~\ref{thm.liftingfactorization2}), and the required statement will follow, see Corollary~\ref{corollary.CW}.

\begin{theorem}\label{thm.liftingfactorization} 
Let $A\to B$ be a surjection in $\DGArt_{\K}^{\le 0}$ and consider a morphism $f\colon P\to M$ in $\CDGA_A^{\le 0}$ between flat objects.
Then every (C,FW)-factorization of the reduction
\[ \bar{f}=f\otimes_AB\colon \bar{P}=P\otimes_AB\to \bar{M}=M\otimes_AB \]
lifts to a factorization of $f$; i.e. for every factorization $\bar{P}\xrightarrow{\sC} \bar{Q}\xrightarrow{\sF\sW} \bar{M}$ of $\bar{f}$ there exists a commutative diagram
\[ \xymatrix{  	P \ar@{->}[d]\ar@{->}[r]^{\sC} & Q \ar@{->}[d]\ar@{->}[r]^{\sF\sW} & M \ar@{->}[d] \\ 
\bar{P} \ar@{->}[r]^{\sC} & \bar{Q}\ar@{->}[r]^{\sF\sW} & \bar{M} 	} \]
in $\CDGA_A^{\le 0}$, where the upper row reduces to the bottom row applying the functor $-\otimes_AB$ and the vertical morphisms are the natural projections.
\begin{proof}
We have a commutative diagram 
\[ \xymatrix{	P\ar@{->}[r]^{g} \ar@{->}[d] \ar@/^2pc/[rr]^{f} & \bar{Q}\times_{\bar{M}}M \ar@{->}[d] \ar@{->}[r]^{\sF\sW} & M \ar@{->}[d] \\
\bar{P} \ar@{->}[r]^{\sC} & \bar{Q} \ar@{->}[r]^{\sF\sW} & \bar{M}	} \] 
in $\CDGA_A^{\le 0}$. Taking a factorization  of $g$ we get 
\[ \xymatrix{	 & D \ar@{->}[d]^{\sF\sW} \ar@/^1pc/[dr]^{\sF\sW} & \\
P\ar@{->}[r]^{g} \ar@{->}[d] \ar@/^1pc/[ur]^{\sC} & \bar{Q}\times_{\bar{M}}M \ar@{->}[d] \ar@{->}[r]^{\sF\sW} & M \ar@{->}[d] \\
\bar{P} \ar@{->}[r]^{\sC} & \bar{Q} \ar@{->}[r]^{\sF\sW} & \bar{M}	} \]
Notice that the composite map $D\to \bar{Q}$ is surjective. Now $D$ and $M$ are $A$-flat and therefore the morphism $\bar{D}=D\otimes_A\K\to \bar{M}$ is a weak equivalence,
and since it factors through $\bar{D}\to  \bar{Q}\xrightarrow{\sF\sW}\bar{M}$, the surjective map $p\colon \bar{D}\to \bar{Q}$ is a trivial fibration.
It follows the existence of a section $s\colon \bar{Q}\to \bar{D}$
commuting with the maps $\bar{P}\to \bar{D}$ and $\bar{P}\to \bar{Q}$.
Since $P\to D$ is a cofibration, by Theorem~\ref{thm.liftingidempotents} the idempotent $\bar{e}=sp\colon \bar{D}\to \bar{D}$ lifts to 
an idempotent of $e\colon D\to D$. Setting $Q=\{x\in D\mid e(x)=x\}$, by Proposition~\ref{prop.fixedloci} we have that $Q\otimes_A\K=\bar{Q}$ and $P\to Q$ is a cofibration because it is a retract of $P\to D$. 
\end{proof}
\end{theorem}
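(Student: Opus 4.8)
The plan is to reduce the whole problem to lifting a single trivial idempotent, for which we may then invoke Theorem~\ref{thm.liftingidempotents}. The difficulty is that a two-step factorization cannot be lifted stepwise; instead I would manufacture an auxiliary object $D$ over $A$ whose reduction over $B$ retracts onto $\bar{Q}$, and recover $Q$ as the fixed locus of the idempotent cut out by that retraction. The advantage of the fixed-locus construction over a naive factorization of $f$ is that it forces the equality $Q\otimes_A B=\bar{Q}$ on the nose, which is exactly what the statement demands (a factorization of $f$ would only match up to weak equivalence).

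First I would form the pullback $W=\bar{Q}\times_{\bar{M}}M$ in $\CDGA_A^{\le 0}$: the cofibration $\bar{P}\to\bar{Q}$ and the map $f\colon P\to M$ agree over $\bar{M}$, hence induce a canonical map $g\colon P\to W$. Choosing a factorization $P\xrightarrow{\sC}D\xrightarrow{\sF\sW}W$ yields an object $D$ that is $A$-flat, since $P$ is flat and $P\to D$ is a cofibration (Corollary~\ref{cor.cofibrationflat} and Lemma~\ref{lem.flatclosedunderretracts}). Because $W\to M$ is the pullback of the trivial fibration $\bar{Q}\to\bar{M}$, the composite $D\to M$ is again a trivial fibration; because $M\to\bar{M}$ is surjective, the composite $D\to\bar{Q}$ is surjective.

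Next I would descend along $A\to B$. As $D$ and $M$ are $A$-flat and $D\to M$ is a weak equivalence, Lemma~\ref{lem.preservaweak} shows $\bar{D}=D\otimes_A B\to\bar{M}$ is a weak equivalence; since it factors through $\bar{Q}\xrightarrow{\sF\sW}\bar{M}$ via the surjection $p\colon\bar{D}\to\bar{Q}$, the \emph{2 out of 3} axiom makes $p$ a trivial fibration. Lifting the cofibration $\bar{P}\to\bar{Q}$ against $p$ produces a section $s\colon\bar{Q}\to\bar{D}$ compatible with the two maps out of $\bar{P}$, and $\bar{e}=sp$ is then a trivial idempotent of $\bar{D}$ fixing the image of $\bar{P}$. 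This is precisely the input of Theorem~\ref{thm.liftingidempotents}, applied to the flat cofibration $P\to D$ with $e_A=\id_P$; it produces a trivial idempotent $e\colon D\to D$ lifting $\bar{e}$ and fixing the image of $P$.

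Finally I would set $Q=F_e$. By Proposition~\ref{prop.fixedloci}, $P\to Q$ is a retract of the cofibration $P\to D$, hence a cofibration, and since fixed loci commute with pushouts we obtain $Q\otimes_A B=F_{\bar{e}}=\bar{Q}$, the last equality because $\bar{Q}\xrightarrow{s}\bar{D}\xrightarrow{p}\bar{Q}$ exhibits $\bar{Q}$ as the fixed locus of $\bar{e}$. It remains to check that $Q\to D\to M$ is a trivial fibration: it reduces over $B$ to $\bar{Q}\to\bar{M}$, so Corollary~\ref{cor.naka} makes it a weak equivalence of flat $A$-algebras, while surjectivity follows from a Nakayama argument, as its cokernel vanishes after applying $-\otimes_A\K$ and $\mathfrak{m}_A$ is nilpotent. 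The genuinely hard input is Theorem~\ref{thm.liftingidempotents}; once that is granted, every other step is routine bookkeeping with the flatness and fixed-locus results of the previous sections. I therefore expect the main conceptual obstacle to be exactly the recognition that lifting the factorization is governed by an idempotent-lifting problem, together with the construction of the auxiliary $D$ that makes the relevant retraction visible over $B$.
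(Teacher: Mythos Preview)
Your proposal is correct and follows essentially the same route as the paper's proof: form the pullback $\bar{Q}\times_{\bar{M}}M$, factor $P\to \bar{Q}\times_{\bar{M}}M$ as $(\sC,\sF\sW)$ to produce $D$, show the reduction $\bar{D}\to\bar{Q}$ is a trivial fibration admitting a section, lift the resulting trivial idempotent via Theorem~\ref{thm.liftingidempotents}, and take $Q$ to be its fixed locus. You actually supply more detail than the paper, which omits the verification that $Q\to M$ is a trivial fibration; your Nakayama/Corollary~\ref{cor.naka} argument for this step is correct (note $Q$ is $A$-flat, e.g.\ as a retract of $D$ or because $P\to Q$ is a cofibration out of a flat object).
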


\begin{corollary}\label{cor.flattrivialcofib}
Let $A\in \DGArt_{\K}^{\le 0}$ and consider a morphism $f\colon P\to M$ in $\CDGA_A^{\le 0}$ between flat objects.
Then $f$ is a cofibration if and only if its reduction $\bar{f}\colon P\otimes_A\K \to M\otimes_A \K$ is a cofibration in $\CDGA_{\K}^{\le 0}$.
\begin{proof}
First assume that $\bar{f}$ is a cofibration; by Theorem \ref{thm.liftingfactorization} there exists a commutative diagram
\[ \xymatrix{	P \ar@{->}[r]|-{\sC} \ar@/^2pc/@{->}[rrr]^{f} \ar@{->}[d] & Q \ar@{->}[rr]|-{\sF\sW} \ar@{->}[d] & & M \ar@{->}[d] \\
P\otimes_A\K \ar@{->}[r]^{\bar{f}} & M\otimes_A\K \ar@{->}[rr]^{\id} & & M\otimes_A\K 	} \]
in $\CDGA_A^{\le 0}$, where the upper row reduces to the bottom row via the functor $-\otimes_A\K$. Moreover, by flatness, Corollary~\ref{cor.naka} implies that the trivial fibration $Q\to M$ is in fact an isomorphism, so that $f$ is obtained as a cofibration followed by an isomorphism, whence the thesis. The converse holds since the class of cofibrations is closed under pushouts.
\end{proof} 
\end{corollary}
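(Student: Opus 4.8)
The plan is to prove the two implications separately, with the forward direction being a formal consequence of the model structure and the reverse direction carrying all the content. First I would dispose of the easy direction: if $f\colon P\to M$ is a cofibration in $\CDGA_A^{\le 0}$, then the reduction functor $-\otimes_A\K$ is precisely the base-change (pushout) functor along the projection $A\to\K$ onto the residue field, and pushouts of cofibrations are again cofibrations by the stability properties recorded in Lemma~\ref{lem.stability}. Hence $\bar f=f\otimes_A\K$ is a cofibration in $\CDGA_{\K}^{\le 0}$, and no flatness hypothesis is needed here.

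For the converse, I would assume that $\bar f\colon\bar P\to\bar M$ is a cofibration and regard it as the trivial $(C,FW)$-factorization $\bar P\xrightarrow{\bar f}\bar M\xrightarrow{\id}\bar M$ of itself, the second arrow being an identity and therefore a (trivial) fibration. Since $P$ and $M$ are flat over $A$, Theorem~\ref{thm.liftingfactorization} lifts this factorization to a $(C,FW)$-factorization $P\xrightarrow{\sC}Q\xrightarrow{\sF\sW}M$ in $\CDGA_A^{\le 0}$ whose reduction modulo $\mathfrak{m}_A$ recovers $\bar P\xrightarrow{\bar f}\bar M\xrightarrow{\id}\bar M$. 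In particular the trivial fibration $Q\to M$ reduces to the identity map $\bar M\to\bar M$.

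The decisive step is then to upgrade this trivial fibration to an isomorphism. I would first note that $Q$ is flat over $A$: the map $P\to Q$ is a cofibration, hence flat by strong left-properness of $\CDGA_A^{\le 0}$ (Corollary~\ref{cor.cofibrationflat}), and composing with the flat map $A\to P$ preserves flatness by Lemma~\ref{lem.flatclosedunderretracts}. Thus $Q\to M$ is a weak equivalence between flat $A$-algebras whose reduction $\bar Q\to\bar M$ is an isomorphism, so the Artinian Nakayama lemma (Corollary~\ref{cor.naka}) forces $Q\to M$ to be an isomorphism. Consequently $f$ factors as the cofibration $P\to Q$ followed by this isomorphism, and is therefore itself a cofibration.

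The main obstacle is entirely concentrated in the appeal to Theorem~\ref{thm.liftingfactorization}: producing the lift of the factorization over $A$ is the genuinely nontrivial homotopical input, which itself rests on the lifting of trivial idempotents (Theorem~\ref{thm.liftingidempotents}). Once that lift is in hand, the passage from a trivial fibration to an isomorphism via the flat Nakayama lemma is routine, and the forward implication is immediate; so the whole weight of the argument is borne by the factorization-lifting theorem.
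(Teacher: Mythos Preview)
Your proposal is correct and follows essentially the same route as the paper: lift the trivial $(C,FW)$-factorization $\bar P\xrightarrow{\bar f}\bar M\xrightarrow{\id}\bar M$ via Theorem~\ref{thm.liftingfactorization}, then use Corollary~\ref{cor.naka} to upgrade the resulting trivial fibration $Q\to M$ to an isomorphism. Your explicit verification that $Q$ is $A$-flat (needed for the Nakayama step) is a detail the paper leaves implicit, and your citation of Lemma~\ref{lem.stability} for the forward direction is slightly off---that lemma concerns $\sG$-cofibrations, whereas here one only needs the basic model-category fact that cofibrations are stable under pushout---but neither point affects the argument.
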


\begin{theorem}\label{thm.liftingfactorization2}
Let $A\to B$ be a surjection in $\DGArt_{\K}^{\le 0}$ and consider a morphism $f\colon P\to M$ in $\CDGA_A^{\le 0}$ between flat objects.
Then every (CW,F)-factorization of the reduction
\[ \bar{f}=f\otimes_AB\colon \bar{P}=P\otimes_AB\to \bar{M}=M\otimes_AB \]
lifts to a factorization of $f$; i.e. for every factorization $\bar{P}\xrightarrow{\sC\sW} \bar{Q}\xrightarrow{\sF} \bar{M}$ of $\bar{f}$ there exists a commutative diagram
\[ \xymatrix{  	P \ar@{->}[d]\ar@{->}[r]^{\sC\sW} & Q \ar@{->}[d]\ar@{->}[r]^{\sF} & M \ar@{->}[d] \\ 
\bar{P} \ar@{->}[r]^{\sC\sW} & \bar{Q}\ar@{->}[r]^{\sF} & \bar{M} 	} \]
in $\CDGA_A^{\le 0}$, where the upper row reduces to the bottom row applying the functor $-\otimes_AB$ and the vertical morphisms are the natural projections.
\begin{proof}
The proof is essentially the same as in Theorem~\ref{thm.liftingfactorization}.
We have a commutative diagram 
\[ \xymatrix{	P\ar@{->}[r]^{g} \ar@{->}[d]^{\sF} \ar@/^2pc/[rr]^{f} & \bar{Q}\times_{\bar{M}}M \ar@{->}[d]^{\sF} \ar@{->}[r]^{\sF} & M \ar@{->}[d]^{\sF} \\
\bar{P} \ar@{->}[r]^{\sC\sW} & \bar{Q} \ar@{->}[r]^{\sF} & \bar{M}	} \] 
in $\CDGA_A^{\le 0}$. Taking a factorization  of $g$ we get 
\[ \xymatrix{	 & D \ar@{->}[d]^{\sF} \ar@/^1pc/[dr]^{\sF} & \\
P\ar@{->}[r]^{g} \ar@{->}[d]^{\sF} \ar@/^1pc/[ur]^{\sC\sW} & \bar{Q}\times_{\bar{M}}M \ar@{->}[d]^{\sF} \ar@{->}[r]^{\sF} & M \ar@{->}[d]^{\sF} \\
\bar{P} \ar@{->}[r]^{\sC\sW} & \bar{Q} \ar@{->}[r]^{\sF} & \bar{M}	} \]
Notice that the composite map $D\to \bar{Q}$ is surjective in negative degrees and hence a fibration. Moreover, the morphism $\bar{P}\to \bar{D}=D\otimes_A\K$ is a trivial cofibration since $P\to D$ is so. Now since $\bar{P}\to \bar{Q}$ factors through $\bar{P}\to \bar{D}$, the map 
$p\colon \bar{D}\to \bar{Q}$ is a trivial fibration.
It follows the existence of a section $s\colon \bar{Q}\to \bar{D}$ commuting with the maps $\bar{P}\to \bar{D}$ and $\bar{P}\to \bar{Q}$.
Since $P\to D$ is a cofibration, by Theorem~\ref{thm.liftingidempotents} the idempotent $\bar{e}=sp\colon \bar{D}\to \bar{D}$ lifts to 
an idempotent of $e\colon D\to D$. Setting $Q=\{x\in D\mid e(x)=x\}$, by Proposition~\ref{prop.fixedloci} we have that $Q\otimes_A\K=\bar{Q}$ and $P\to Q$ is a cofibration because it is a retract of $P\to D$. 
\end{proof}
\end{theorem}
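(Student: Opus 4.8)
The plan is to transcribe the proof of Theorem~\ref{thm.liftingfactorization}, interchanging the roles played by the trivial-fibration half and the fibration half of the factorization; the engine in both cases is the reduction of a factorization-lifting problem to an idempotent-lifting problem solved by Theorem~\ref{thm.liftingidempotents}. First I would form the pullback $\bar{Q}\times_{\bar{M}}M$ of the given fibration $\bar{Q}\xrightarrow{\sF}\bar{M}$ along the reduction $M\to\bar{M}$, and use its universal property to produce a comparison map $g\colon P\to\bar{Q}\times_{\bar{M}}M$ (compatible with $f$ on the one side and with the given $\bar{P}\xrightarrow{\sC\sW}\bar{Q}$ on the other). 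Then I would choose a (CW,F)-factorization $P\xrightarrow{\sC\sW}D\xrightarrow{\sF}\bar{Q}\times_{\bar{M}}M$ in $\CDGA_A^{\le 0}$. Since $A\to P$ is flat and $P\to D$ is a cofibration, $D$ is flat over $A$, which is what makes reduction modulo $\mA$ behave well.

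Next I would record the two fibration facts. The projection $\bar{Q}\times_{\bar{M}}M\to\bar{Q}$ is the pullback of the surjection $M\to\bar{M}$, hence a fibration, so the composite $D\to\bar{Q}$ is surjective in negative degrees and therefore a fibration. Passing to $\bar{D}=D\otimes_A\K$, the map $\bar{P}\to\bar{D}$ is a trivial cofibration (the reduction of the trivial cofibration $P\to D$ between flat objects, using Lemma~\ref{lem.preservaweak}), and the given trivial cofibration $\bar{P}\to\bar{Q}$ factors through it; by the \emph{2 out of 3} axiom the surjection $p\colon\bar{D}\to\bar{Q}$ is therefore a \emph{trivial} fibration. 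Because $\bar{P}\to\bar{Q}$ is a cofibration, $p$ admits a section $s$ commuting with the two maps out of $\bar{P}$, and $\bar{e}=sp\colon\bar{D}\to\bar{D}$ is then a trivial idempotent whose restriction to the image of $\bar{P}$ is the identity.

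The decisive step is to lift $\bar{e}$. Applying Theorem~\ref{thm.liftingidempotents} to the surjection $A\to\K$, the cofibration $P\to D$ of flat objects, and the trivial idempotent $\id_P$ on $P$, I obtain a trivial idempotent $e\colon D\to D$ reducing to $\bar{e}$ and satisfying $e\circ(P\to D)=(P\to D)$. Setting $Q=\{x\in D\mid e(x)=x\}$, Lemma~\ref{prop.fixedloci} identifies $Q\to D$ as a retract (a weak equivalence, since $e$ is trivial) and, because fixed loci commute with pushouts and $\bar{e}=sp$ has fixed locus $\bar{Q}$, yields $Q\otimes_A\K\cong\bar{Q}$. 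The desired factorization is $P\to Q\hookrightarrow D\to M$: the map $P\to Q$ is a cofibration as a retract of $P\to D$ and a weak equivalence by \emph{2 out of 3} (both $P\to D$ and $Q\to D$ being weak equivalences), hence a trivial cofibration, and the whole factorization reduces to $\bar{P}\xrightarrow{\sC\sW}\bar{Q}\xrightarrow{\sF}\bar{M}$.

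The genuinely deep ingredient here is Theorem~\ref{thm.liftingidempotents}, which I am assuming; granting it, the rest is a faithful copy of the (C,FW) argument. The one point I expect to require a little extra attention---exactly as in Theorem~\ref{thm.liftingfactorization}---is checking that $Q\to M$ is an honest fibration rather than merely reducing to one: since $Q$ and $M$ are flat over the Artinian base, surjectivity in negative degrees can be propagated from the reduction $\bar{Q}\xrightarrow{\sF}\bar{M}$ using the nilpotence of $\mA$ (the Artin form of Nakayama's lemma behind Proposition~\ref{prop.naka} and Corollary~\ref{cor.naka}). In the (CW,F) case this is the only place where ``fibration'' rather than ``trivial fibration'' must be tracked, and it is precisely the feature distinguishing this statement from Theorem~\ref{thm.liftingfactorization}.
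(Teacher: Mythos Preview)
Your proposal is correct and follows the paper's proof essentially verbatim: form the pullback $\bar{Q}\times_{\bar{M}}M$, take a $(\sC\sW,\sF)$-factorization $P\to D\to \bar{Q}\times_{\bar{M}}M$, observe that the induced $\bar{D}\to\bar{Q}$ is a trivial fibration so that the resulting trivial idempotent on $\bar{D}$ can be lifted via Theorem~\ref{thm.liftingidempotents}, and take $Q$ to be the fixed locus. You even supply a detail the paper leaves implicit, namely the verification that $Q\to M$ is a fibration; your Nakayama-style argument (the cokernel in each negative degree has trivial reduction, hence vanishes by nilpotence of $\mathfrak{m}_A$) is correct and does not actually require flatness, only right-exactness of the tensor product.
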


By Theorem~\ref{thm.liftingfactorization2} it follows the result that we claimed at the beginning of the section.

\begin{corollary}\label{corollary.CW}
Let $A\in \DGArt_{\K}^{\le 0}$ and consider a flat object $P\in\CDGA_A^{\le 0}$.
For every trivial cofibration $\bar{f}\colon \bar{P}=P\otimes_A\K\to \bar{Q}$ in $\CDGA_{\K}^{\le 0}$ there exist a flat object $Q\in\CDGA_A^{\le 0}$ such that $Q\otimes_A\K=\bar{Q}$ and a trivial cofibration $f\colon P\to Q$ lifting $\bar{f}$.
\begin{proof}
It is sufficient to apply Theorem~\ref{thm.liftingfactorization2} to the factorization $\bar{P} \xrightarrow{\sC\sW} \bar{Q} \xrightarrow{\sF} 0$.
\end{proof}
\end{corollary}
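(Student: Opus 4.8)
The plan is to deduce the statement directly from Theorem~\ref{thm.liftingfactorization2}, applied to the surjection $A\to \K$ in $\DGArt_{\K}^{\le 0}$ and to the terminal morphism $f\colon P\to 0$, where $0$ denotes the zero algebra, i.e. the terminal object of $\CDGA_A^{\le 0}$. For this to be legitimate I first need $f$ to be a morphism between flat objects. The source $P$ is flat by hypothesis, and $A\to 0$ is flat as well: the functor $-\amalg_A 0$ is constant with value $0$, so it sends every pullback square of trivial fibrations to the (trivially Cartesian) square with all vertices equal to $0$, and hence preserves pullback diagrams of trivial fibrations in the sense of Definition~\ref{def.flatmorphism}. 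The reduction of $f$ along $A\to\K$ is then the terminal map $\bar f\colon \bar P\to 0$ in $\CDGA_{\K}^{\le 0}$.

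Next I would promote the given datum to a genuine (CW,F)-factorization of $\bar f$. Since the right-hand term is $0$, the map $\bar Q\to 0$ is surjective in every degree, in particular in strictly negative degrees, hence a fibration in $\CDGA_{\K}^{\le 0}$. Therefore
\[ \bar P \xrightarrow{\;\sC\sW\;} \bar Q \xrightarrow{\;\sF\;} 0 \]
is a (CW,F)-factorization of $\bar f$. Feeding this into Theorem~\ref{thm.liftingfactorization2} (with $B=\K$) yields a commutative diagram in $\CDGA_A^{\le 0}$ whose top row $P\xrightarrow{\sC\sW} Q\xrightarrow{\sF} 0$ reduces, under $-\otimes_A\K$, to the chosen factorization. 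In particular $Q\otimes_A\K=\bar Q$ and the trivial cofibration $f\colon P\to Q$ lifts $\bar f\colon \bar P\to\bar Q$, which is exactly the lift required by the corollary.

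Finally I would verify flatness of $Q$, the only clause of the statement not literally present in the conclusion of Theorem~\ref{thm.liftingfactorization2}. This is immediate: the morphism $P\to Q$ is a cofibration, hence flat by Corollary~\ref{cor.cofibrationflat}, while $A\to P$ is flat by hypothesis, so the composite $A\to Q$ is flat by the stability of flat morphisms under composition in Lemma~\ref{lem.flatclosedunderretracts}. I do not anticipate a real obstacle at the level of the corollary itself: all the substantive work—the lifting of trivial idempotents (Theorem~\ref{thm.liftingidempotents}) and the acyclicity of the derivation complex in Proposition~\ref{prop.Dacyclic}—is already discharged inside Theorem~\ref{thm.liftingfactorization2}. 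The only points needing a moment's attention are the routine checks that $0$ is flat over $A$ and that $\bar Q\to 0$ counts as a fibration, which are precisely what justify using the terminal object as the target of the factorization.
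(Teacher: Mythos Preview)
Your proposal is correct and follows exactly the same route as the paper's proof: both apply Theorem~\ref{thm.liftingfactorization2} to the (CW,F)-factorization $\bar{P}\xrightarrow{\sC\sW}\bar{Q}\xrightarrow{\sF}0$ of the terminal morphism. You simply spell out the routine checks (flatness of $0$ over $A$, that $\bar{Q}\to 0$ is a fibration, and flatness of $Q$ via Corollary~\ref{cor.cofibrationflat} and Lemma~\ref{lem.flatclosedunderretracts}) that the paper leaves implicit.
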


\begin{corollary}
Let $A\in \DGArt_{\K}^{\le 0}$ and consider a cofibrant object $Q\in\CDGA_A^{\le 0}$.
For every trivial cofibration $\bar{f}\colon \bar{P}\to \bar{Q}=Q\otimes_A\K$ in $\CDGA_{\K}^{\le 0}$ there exist a flat object $P\in\CDGA_A^{\le 0}$ such that $P\otimes_A\K=\bar{P}$ and a lifting of $\bar{f}$ to a trivial cofibration $f\colon P\to Q$.
\begin{proof}
Since $\bar{P}$ is fibrant the diagram of solid arrows
\[ \xymatrix{	\bar{P} \ar@{->}[d]_{\bar{f}} \ar@{->}[r]^{\id} & \bar{P} \ar@{->}[d] \\
\bar{Q} \ar@{->}[r] \ar@{-->}[ur]^{\bar{p}} & 0	} \]
admits the dotted lifting $\bar{p}\colon \bar{Q}\to \bar{P}$ in $\CDGA_{\K}^{\le 0}$. In particular, $\bar{P}$ is the fixed locus of the trivial idempotent $\bar{e} = \bar{f}\circ\bar{p} \colon \bar{Q}\to \bar{Q}$. By Theorem~\ref{thm.liftingidempotents} there exists a trivial idempotent $e\colon Q\to Q$ whose fixed locus $P = \{x\in Q\mid e(x)=x\}$ satisfies $P\otimes_A\K=\bar{P}$, see Proposition~\ref{prop.fixedloci}. The lifting of $\bar{f}$ is given by Theorem~\ref{thm.liftingfactorization2}.
\end{proof}
\end{corollary}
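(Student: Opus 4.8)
The plan is to realise $\bar{P}$ as the reduction of the fixed locus of a trivial idempotent on $Q$ obtained by lifting an idempotent constructed downstairs. First I would use that every object of $\CDGA_{\K}^{\le 0}$ is fibrant: fibrations are the surjections in strictly negative degrees, and the map to the terminal object $0$ is surjective in all degrees, so in particular $\bar{P}\to 0$ is a fibration. Consequently the trivial cofibration $\bar{f}\colon\bar{P}\to\bar{Q}$ has the left lifting property against $\bar{P}\to 0$, which produces a retraction $\bar{p}\colon\bar{Q}\to\bar{P}$ with $\bar{p}\bar{f}=\id_{\bar{P}}$. By the \emph{2 out of 3} axiom $\bar{p}$ is a weak equivalence, so $\bar{e}=\bar{f}\bar{p}\colon\bar{Q}\to\bar{Q}$ is a trivial idempotent, and by Lemma~\ref{prop.fixedloci}(2) the map $\bar{f}\colon\bar{P}\to\bar{Q}$ is (canonically) its fixed locus.

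Next I would lift $\bar{e}$ to $A$. I apply Theorem~\ref{thm.liftingidempotents} to the surjection $A\to\K$, taking for the cofibration between flat objects the structure map $A\to Q$ (a cofibration since $Q$ is cofibrant, hence flat in the strong left-proper category $\CDGA_{\K}^{\le 0}$), for the trivial idempotent the identity $\id_A$ on the initial object $A$, and for the idempotent to be lifted $f_B=\bar{e}$. The compatibility $f_Bg_B=g_Be_B$ holds because any morphism of $\K$-algebras fixes the image of the unit $\K\to\bar{Q}$, so $\bar{e}$ restricts to the identity on the structure map. The theorem then yields a trivial idempotent $e\colon Q\to Q$ lifting $\bar{e}$ with $e$ fixing the image of $A$. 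Setting $P=\{x\in Q\mid e(x)=x\}$, Lemma~\ref{prop.fixedloci}(1) exhibits $P$ as a retract of $Q$, so $P$ is flat by Lemma~\ref{lem.flatclosedunderretracts}; and since $e$ fixes the image of $A\to Q$, Lemma~\ref{prop.fixedloci}(3) identifies the fixed locus of the induced idempotent $e\otimes_A\K=\bar{e}$ with $P\otimes_A\K$, giving $P\otimes_A\K\cong\bar{P}$.

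Finally I would check that the fixed-locus inclusion $f=\iota\colon P\to Q$ is the desired trivial cofibration. Its reduction is the fixed-locus inclusion of $\bar{e}$, which is $\bar{f}$ by the first step; it is a weak equivalence by Lemma~\ref{prop.fixedloci}(1) because $e$ is trivial; and since both $P$ and $Q$ are flat and $\bar{f}$ is a cofibration, Corollary~\ref{cor.flattrivialcofib} shows $\iota$ is itself a cofibration. (Equivalently one may lift the trivial factorization $\bar{P}\xrightarrow{\bar{f}}\bar{Q}\xrightarrow{\id}\bar{Q}$ via Theorem~\ref{thm.liftingfactorization2} and invoke Corollary~\ref{cor.naka} to collapse the resulting trivial fibration onto $Q$.) Thus $f\colon P\to Q$ is a trivial cofibration lifting $\bar{f}$. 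The one genuinely hard input is the lifting of the trivial idempotent in Theorem~\ref{thm.liftingidempotents}; once that is granted, every remaining step is formal bookkeeping with retracts, flatness, and the characterisation of cofibrations between flat objects.
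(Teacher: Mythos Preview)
Your argument is correct and follows the same strategy as the paper: construct the retraction $\bar p$, lift the resulting trivial idempotent $\bar e=\bar f\bar p$ to $e\colon Q\to Q$ via Theorem~\ref{thm.liftingidempotents}, and take $P$ to be the fixed locus. The only difference is in the last step: the paper invokes Theorem~\ref{thm.liftingfactorization2} (your parenthetical alternative), whereas you verify directly that the fixed-locus inclusion $\iota\colon P\to Q$ is a trivial cofibration using Corollary~\ref{cor.flattrivialcofib}; both are valid and your route is marginally more economical.
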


\bigskip

\section{Deformations of DG-algebras}

Following the general construction of Section~\ref{section.deformationmorphism} for every $R=(\K\to R)$ in $\CDGA_{\K}^{\le 0}$ we can 
consider the functor $\Def_R$ of its deformations in the strong left-proper model category $\CDGA_{\K}^{\le 0}$, defined in the 
category $\bM(\K)$. Recall that the above functor is homotopy invariant (Theorem~\ref{thm.homotopyequivalence}), i.e., for every 
weak equivalence $R\to S$ and every $A\in \bM(\K)$ the natural map $\Def_R(A)\to \Def_S(A)$ is bijective.  
In order to prove some additional interesting properties, in view of Corollary~\ref{cor.naka} and the results of Section~\ref{sec.lifting}, we consider the restricted functor\footnote{We shall see later that for every $A\in \DGArt_{\K}^{\le 0}$ the class $\Def_R(A)$ is not proper.}
\[ \Def_R\colon  \DGArt_{\K}^{\le 0}\to \Set\] 
of (set-valued) derived deformations of $R$. The main goal of this section is to prove that: 
\begin{enumerate}

\item if $R$ and $A$ are concentrated in degree $0$ then $\Def_R(A)$ is naturally isomorphic to the set of isomorphism classes of deformations defined in the classical sense: 
\[ \faktor{\Def_R(A)\cong \left\{		\begin{matrix}
\mbox{commutative flat $A$-algebras $R_A$ together with} \\
\mbox{an isomorphism $R_A\otimes_A\K\cong R$ of $\K$-algebras}
\end{matrix}	\right\} }{\text{isomorphism}}	\,;	\]

\item every deformation of a cofibrant DG-algebra may be obtained by a perturbation of the differential; 

\item  if $S\to R$ is a cofibrant
resolution, then the DG-Lie algebra of derivations of $S$ controls the functor $\Def_R$.
\end{enumerate}

It is interesting to point out that the above point (3) requires DG-algebras in non-positive degrees, and its analog fails in the category $\DGCA_{\K}$, see e.g. Example~\ref{ex.esempiononcofibrante}.

\medskip
\subsection{Strict deformations}\label{section.strictdeformations}

In this subsection we introduce the notion of strict deformations in $\CDGA_{\K}^{\le 0}$. It is a purely technical notion used in order to study deformations of algebras of special type: as we shall see in Example~\ref{ex.strictnogood} strict deformations are not homotopy invariant and then unsuitable to study deformations in full generality.

\begin{definition}\label{def.strictdeformationCDGA}
Given  $R\in\CDGA_{\K}^{\le 0}$, the class of strict deformations of $R$ over $A\in \DGArt_{\K}^{\le 0}$ is 
defined by
\[ \faktor{ \D_R(A) = \left\{		\begin{matrix}
\mbox{morphisms $R_A\to R$ in $\CDGA_A^{\le 0}$ such that $R_A$ is flat,} \\
\mbox{and the reduction $R_A\otimes_A\K\to R$ is an isomorphism}
\end{matrix}	\right\} }{\cong}\;.		\]
Two strict deformations $R_A\to R$ and $R_A'\to R$ are isomorphic if and only if there exists an isomorphism $R_A\xrightarrow{\cong} R_A'$ in $\CDGA_A^{\le 0}$ such that the diagram
\[ \xymatrix{ R_A \ar@{->}[rr]^{\cong} \ar@{->}[dr] & & R_A' \ar@{->}[dl] \\
 & R & 	} \]
commutes.
\end{definition}

It is plain that for every  $R\in\CDGA_{\K}^{\le 0}$ and every $A\in \DGArt_{\K}^{\le 0}$ there exists  a natural map
\[ \eta_A\colon \D_R(A) \longrightarrow \Def_R(A),\qquad (R_A\to X)\mapsto (R_A\to X)\,.\]
Whenever $A\in \Art_{\K}$, by Corollary~\ref{cor.degreewiseflat}, the restriction to the grade 0 component gives also a natural map $D_R(A)\to D_{R^0}(A)$. 

\begin{example}[Classical infinitesimal deformations as strict deformations]\label{example.classicaldef}
Consider an object $R$ in $\CDGA_{\K}^{\le 0}$ together with an Artin ring $A\in\Art_{\K}$, and assume that $R$ is concentrated in degree $0$. The same argument used in the proof of Corollary~\ref{cor.flatnessandrelation}
shows that  every strict deformation $R_A\to R$  is concentrated in degree $0$ and therefore  $\D_R(A)$ is naturally 
isomorphic to the set of classical deformations of the commutative algebra $R$ over the local Artin ring $A$.
\end{example}

\begin{proposition}\label{prop.strictVSlarge}
Consider $R\in\CDGA_{\K}^{\le 0}$  concentrated in degree $0$. Then for every $A\in \Art_{\K}$
there natural map $\eta_A\colon \D_R(A)\to \Def_R(A)$ is bijective with inverse
\[ H^0(-)\colon \Def_R(A)\to \D_R(A)\,. \]
\end{proposition}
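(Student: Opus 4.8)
The plan is to verify that the two composites $\eta_A\circ H^0$ and $H^0\circ\eta_A$ are identities, after first checking that $H^0(-)$ actually lands in $\D_R(A)$ and descends to equivalence classes. The geometric engine is Corollary~\ref{cor.flatnessandrelation}, which converts degreewise acyclicity of the fibre into flatness of $H^0$.

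First I would record the observation already used in Example~\ref{example.classicaldef}, namely that every strict deformation $R_A\to R$ is concentrated in degree $0$. Indeed $R_A$ is flat over $A$, so each component $R_A^j$ is a flat $A$-module by Corollary~\ref{cor.degreewiseflat}, and for $j<0$ we have $R_A^j\otimes_A\K=(R_A\otimes_A\K)^j=R^j=0$; since $\mathfrak{m}_A$ is nilpotent, $R_A^j=\mathfrak{m}_A R_A^j=\dots=\mathfrak{m}_A^{n}R_A^j=0$. Hence $H^0(R_A)=R_A$ together with its given augmentation, and the identity $H^0\circ\eta_A=\id_{\D_R(A)}$ is immediate.

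Next, given a deformation $(A\to X_A\to R)\in\Def_R(A)$, I would set $R_A:=H^0(X_A)$. Because $R$ is concentrated in degree $0$, the reduction weak equivalence $X_A\otimes_A\K\to R$ forces $H^i(X_A\otimes_A\K)=0$ for $i<0$, so the canonical surjection $X_A\otimes_A\K\to H^0(X_A\otimes_A\K)$ is a trivial fibration. Corollary~\ref{cor.flatnessandrelation} then yields that the projection $\pi\colon X_A\to H^0(X_A)=R_A$ is a trivial fibration of flat $A$-algebras; in particular $R_A$ is flat over $A$. Since $R$ lives in degree $0$ the structure map factors as $X_A\xrightarrow{\pi}R_A\to R$, and tensoring this triangle with $\K$ over $A$ gives $X_A\otimes_A\K\to R_A\otimes_A\K\to R$ whose composite is the reduction of $X_A$, a weak equivalence. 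The first arrow is a weak equivalence by Corollary~\ref{cor.naka} applied to the weak equivalence $\pi$ between flat $A$-algebras, so by the 2 out of 3 axiom $R_A\otimes_A\K\to R$ is a weak equivalence between objects concentrated in degree $0$, hence an isomorphism. Thus $R_A\to R$ is a genuine strict deformation, and this defines $H^0(-)\colon\Def_R(A)\to\D_R(A)$.

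Finally I would settle well-definedness together with $\eta_A\circ H^0=\id_{\Def_R(A)}$. The trivial fibration $\pi\colon X_A\to R_A$ commutes with the maps from $A$ and with the maps to $R$, so it is a direct equivalence in the sense of Definition~\ref{def.deformationXL}; therefore $\eta_A\bigl(H^0([X_A])\bigr)=[R_A]=[X_A]$. To see that $H^0$ respects the equivalence relation it suffices to treat a single direct equivalence $h\colon Y_A\to X_A$: then $H^0(h)\colon H^0(Y_A)\to H^0(X_A)$ is a morphism of strict deformations and a quasi-isomorphism between objects concentrated in degree $0$, hence an isomorphism, so the two strict deformations coincide. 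The one point demanding genuine care is the invocation of Corollary~\ref{cor.flatnessandrelation}: everything rests on recognising $X_A\otimes_A\K\to H^0(X_A\otimes_A\K)$ as a trivial fibration and transporting flatness of the fibre back to flatness of $H^0(X_A)$ over $A$. Once this is secured, the remaining steps are formal manipulations with the 2 out of 3 axiom and the collapse of weak equivalences to isomorphisms in degree $0$.
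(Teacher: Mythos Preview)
Your proof is correct and follows essentially the same approach as the paper: both hinge on Corollary~\ref{cor.flatnessandrelation} to show that $X_A\to H^0(X_A)$ is a trivial fibration of flat $A$-algebras, which simultaneously yields the well-definedness of $H^0$ and the direct equivalence establishing $\eta_A\circ H^0=\id$. Your version is more carefully written---you explicitly verify well-definedness on equivalence classes and spell out the Nakayama argument for degree-$0$ concentration of strict deformations---points the paper leaves implicit.
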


\begin{proof}
For every $A\in\Art_{\K}$ consider the map 
\[\begin{split} 
H^0&\colon \Def_R(A) \longrightarrow \D_R(A),\\  
H^0&(R_A\to R)=\left(H^0(R_A)\to H^0(R_A)\otimes_A\K=H^0(R_A\otimes_A\K)\xrightarrow{\simeq}R\right),\end{split}
\]
that is properly defined since $H^0(R_A)$ is flat over $A$ by Corollary~\ref{cor.flatnessandrelation}.
On the other side, the natural map  $\eta_A\colon \D_R(A) \longrightarrow \Def_R(A)$ 
is injective since $H^0\circ \eta_A$ is the identity. Finally, again by Corollary~\ref{cor.flatnessandrelation}, 
for every $R_A\to R$ in $\Def_R(A)$ the map $R_A\to H^0(R_A)$ is a weak equivalence and this implies that 
also $\eta_A\circ H^0$ is the identity in $\Def_R(A)$.
\end{proof}

\begin{example}\label{ex.strictnogood} Strict deformations are not homotopy invariant (in any reasonable sense) for general DG-algebras.
For instance, consider the algebra $R$ in degrees $-1,0$, where 
\[ R^0=\frac{\C[x,y]}{(x^3,y^2,x^2y)},\quad R^{-1}=\C e,\qquad d(e)=x^2\quad xe=ye=0\,,\]
and notice that $R\to H^0(R)= \dfrac{\C[x,y]}{(x^2,y^2)}$ is a trivial fibration. 
We claim that there exists a first order deformation of $H^0(R)$ that does not lift to $R^0$, and therefore 
that $\D_{R}$ is not naturally isomorphic to $\D_{H^0(R)}$. 
If  $A=\C[\varepsilon]\in \Art_{\C}$ denotes the ring of dual numbers, then  the deformation 
\[ \frac{A[x,y]}{(x^2,y^2+\varepsilon)}\to H^0(R)\]
does not lift to a deformation of $R^0$. In fact the ideal $(x^3,y^2,x^2y)$ is generated by the determinants of the $2\times 2$ minors of the matrix
\[ G=\begin{pmatrix}x^2&y&0\\ 0&x&y\end{pmatrix}\]
and by Hilbert-Schaps  Theorem \cite[Thm. 5.1]{Art} every deformation of $R^0$ is induced by a deformation of the matrix $G$; in particular every first order deformation of the ideal $(x^3,y^2,x^2y)$ is contained in
the maximal ideal $(x,y)$.

\end{example}

\medskip

\subsection{Strict deformations of cofibrant DG-algebras}
\label{section.strictcofibrant}

Throughout this subsection we shall denote by $X\in\CDGA_{\K}^{\le 0}$ a cofibrant DG-algebra; then for every 
strict deformation 
\[A\xrightarrow{f_A} X_A \xrightarrow{\psi}X,\qquad A\in \DGArt_{\K}^{\le 0},\]
the map $\psi$ is surjective  and then also a fibration. Moreover, since $\K\to X_A\otimes_A\K\cong X$ is a cofibration, according to 
Corollary~\ref{cor.flattrivialcofib} also 
$f_A$ is a cofibration.

\begin{theorem}\label{thm.cfdefVSstrictcfdef}
Let $A\in\DGArt_{\K}^{\le 0}$ and consider a cofibrant object $X\in\CDGA_{\K}^{\le 0}$. Then the map
\[ \eta_A\colon \D_X(A) \to \Def_X(A) \]
is bijective.
\end{theorem}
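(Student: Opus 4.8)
The plan is to prove bijectivity of $\eta_A$ by treating surjectivity and injectivity separately, using throughout two standing facts: in $\CDGA_{\K}^{\le 0}$ every object is fibrant (a map to the terminal object $0$ is surjective in negative degrees), and a strict deformation $X_A\to X$ is automatically a $c$-deformation, since $\K\to X_A\otimes_A\K\cong X$ is a cofibration and hence $A\to X_A$ is a cofibration by Corollary~\ref{cor.flattrivialcofib}. By Lemma~\ref{lemma.defVScdef} I may freely identify $\Def_X(A)$ with $c$-deformations. The two workhorses will be the Artinian Nakayama Lemma (Corollary~\ref{cor.naka}), which upgrades ``isomorphism after $-\otimes_A\K$'' to ``isomorphism'' for maps of flat $A$-algebras, together with the lifting theorems of Section~\ref{sec.lifting}.

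\emph{Surjectivity.} Given a $c$-deformation $A\xrightarrow{\sC} X_A\xrightarrow{\rho} X$, I would first factor its reduction $\bar\rho\colon X_A\otimes_A\K\to X$ as a trivial cofibration followed by a trivial fibration, $X_A\otimes_A\K\xrightarrow{\bar u}\bar Q\xrightarrow{\bar\pi}X$. By Corollary~\ref{corollary.CW} the trivial cofibration $\bar u$ lifts to a trivial cofibration $X_A\xrightarrow{\sC\sW}Q$ with $Q$ flat and $Q\otimes_A\K=\bar Q$; composing $Q\to\bar Q\xrightarrow{\bar\pi}X$ exhibits $Q$ as a $c$-deformation directly equivalent to $X_A$. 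Since $\bar\pi$ is a trivial fibration and $X$ is cofibrant it admits a section $\bar s$, and $\bar e=\bar s\bar\pi$ is a trivial idempotent of $\bar Q$ whose fixed locus is $X$ (Lemma~\ref{prop.fixedloci}). I then lift $\bar e$ along $A\to\K$ to a trivial idempotent $e\colon Q\to Q$ fixing the image of $A$, applying Theorem~\ref{thm.liftingidempotents} to the cofibration $A\to Q$, the idempotent $\id_A$, and $f_B=\bar e$ (the compatibility $\bar e\,(\K\to\bar Q)=(\K\to\bar Q)$ holds since $\bar e$ is unital). Its fixed locus $Q'=F_e$ is flat, being a retract of $Q$ (Lemma~\ref{lem.flatclosedunderretracts}), and satisfies $Q'\otimes_A\K\cong F_{\bar e}=X$ by Lemma~\ref{prop.fixedloci}; hence $Q'\to X$ is a strict deformation, directly equivalent to $Q$ through the weak equivalence $F_e\hookrightarrow Q$, so $\eta_A[Q'\to X]=[X_A]$.

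\emph{Injectivity.} Suppose strict deformations $X_A\to X$ and $Y_A\to X$ coincide in $\Def_X(A)$. By Proposition~\ref{prop.pushoutcfdefo} there is a $c$-deformation $Z_A$ and a span of trivial cofibrations $X_A\xrightarrow{i}Z_A\xleftarrow{j}Y_A$ over $X$. Writing $\rho_X,\rho_Y,\rho_Z$ for the structure maps to $X$ and reducing modulo $\mathfrak{m}_A$ (identifying $X_A\otimes_A\K=X=Y_A\otimes_A\K$ via the strict isomorphisms), the reductions $\bar i,\bar j$ become sections of the weak equivalence $\bar\rho_Z\colon Z_A\otimes_A\K\to X$. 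The key step is to lift the reduction $\bar\rho_Z$ to a retraction $q\colon Z_A\to Y_A$ over $A$: viewing $\bar\rho_Z$ as a solution of the lifting problem with top map $\id_X$, left the trivial cofibration $\bar j$ and right the fibration $X\to 0$, Theorem~\ref{thm.slashboxes} (trivial cofibration/fibration case, along $A\to\K$) produces $q$ with $qj=\id_{Y_A}$ and $q\otimes_A\K=\bar\rho_Z$. Then $\phi=qi\colon X_A\to Y_A$ is a map of flat $A$-algebras whose reduction is $\bar\rho_Z\,\bar i=\id_X$, so $\phi$ is an isomorphism by Corollary~\ref{cor.naka}; and since $\rho_Y q$ and $\rho_Z$ are maps into the $\K$-algebra $X$ with equal reductions they coincide, giving $\rho_Y\phi=\rho_Z i=\rho_X$. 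Hence $\phi$ is an isomorphism of strict deformations and $X_A\cong Y_A$ in $\D_X(A)$.

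\emph{Main obstacle.} I expect the real work to be in surjectivity, namely replacing an abstract homotopical deformation by an honestly strict one; the device is to combine the lifting of the $\sC\sW$-part of a factorization (Corollary~\ref{corollary.CW}) with the lifting of the trivial idempotent that cuts out $X$ inside the trivial-fibration part (Theorem~\ref{thm.liftingidempotents}), the latter being precisely where the hard idempotent-lifting result enters. A recurring technical point, used in both halves, is that every $A$-algebra map into the $\K$-algebra $X$ factors through the reduction $-\otimes_A\K$, so compatibility of all structure maps to $X$ may be checked after reduction; the other is the systematic use of the fibrancy of every object of $\CDGA_{\K}^{\le 0}$ to produce the sections and retractions above.
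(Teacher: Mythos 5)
Your proposal is correct and follows essentially the same route as the paper: both halves rest on the reduction to $c$-deformations (Lemma~\ref{lemma.defVScdef}), on Proposition~\ref{prop.pushoutcfdefo} followed by a lifting-plus-Nakayama (Corollary~\ref{cor.naka}) argument for injectivity, and on Corollary~\ref{corollary.CW} together with the lifting of trivial idempotents (Theorem~\ref{thm.liftingidempotents}) and their fixed loci (Lemma~\ref{prop.fixedloci}) for surjectivity. The only cosmetic differences are that the paper builds the ambient object $Z$ by factoring the natural map out of the coproduct $(X_A\otimes_A\K)\otimes_{\K}X$ instead of choosing a section of the trivial fibration $\bar{\pi}$, and it obtains the retraction $Z_A\to Y_A$ from the plain lifting axiom (lifting $\id_{Y_A}$ against the fibration $Y_A\to X$) rather than from Theorem~\ref{thm.slashboxes}.
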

\begin{proof} \emph{Injectivity.} Consider two strict deformations $A\to X_A\to X$ and $A\to Y_A\to X$ that are mapped in the same 
element of $\Def_X$. 
By Proposition~\ref{prop.pushoutcfdefo} there exists $A\to Z_A\to X$ in $c\Def_X(A)$ together with a commutative diagram
\[ \xymatrix{	 & A \ar@{->}[d] \ar@/_1pc/@{->}[dl] \ar@/^1pc/@{->}[dr] & \\
X_A \ar@{->}[r]^{\iota} \ar@/_1pc/@{->}[dr] & Z_A \ar@{->}[d]_{\varphi} & Y_A \ar@{->}[l]_{\sigma} \ar@/^1pc/@{->}[dl]^{\psi} \\
 & X & 	} \]
with $\sigma,\iota$ trivial cofibrations and $\varphi,\psi$ fibrations.

In order to prove that  $A\to X_A\to X$ is isomorphic to $A\to Y_A\to X$,   notice that the diagram of solid arrows
\[ \xymatrix{	Y_A \ar@{->}[r]^{id} \ar@{->}[d]_{\sigma} & Y_A \ar@{->}[d]^{\psi} \\
Z_A \ar@{->}[r]^{\varphi} \ar@{-->}[ur]^{\pi} & X	} \]
admits a  lifting $\pi\colon Z_A\to Y_A$. Therefore, the diagram
\[ \xymatrix{	 & A \ar@{->}[dl] \ar@{->}[dr] & \\
X_A \ar@{->}[rr]^{\pi\circ\iota} \ar@{->}[dr]_{\sF} & & Y_A \ar@{->}[dl]^{\psi} \\
 & X & 	} \]
commutes, and the reduction $\overline{\pi\iota}\colon X_A\otimes_A\K \to Y_A\otimes_A\K$ is an isomorphism. To conclude observe that by Corollary~\ref{cor.naka} the map $\pi\circ\iota$ is an isomorphism and the statement follows.

\emph{Surjectivity.} By Lemma~\ref{lemma.defVScdef} it is sufficient to prove that every $c$-deformation
\[ X_A\to X_A\otimes_A\K\xrightarrow{\pi} X \]
is equivalent to a strict deformation. Consider the commutative diagram
\[ \xymatrix{	 & \K \ar@/_1pc/@{->}[dl]|-{\sC} \ar@/^1pc/@{->}[dr]|-{\sC} & \\
X_A\otimes_A\K \ar@{->}[r]|-{\sC} \ar@/_1pc/@{->}[dr] & (X_A\otimes_A\K)\otimes_{\K}X \ar@{-->}[d]|-{\varphi} & X \ar@{->}[l]|-{\sC} \ar@/^1pc/@{->}[dl]|-{id_X} \\
 & X & 	} \]
in $\CDGA_A^{\le 0}$, and take a factorization of the natural map $\varphi\colon (X_A\otimes_A\K)\otimes_{\K}X\to X$ as a cofibration followed by a trivial fibration:
\[ (X_A\otimes_A\K)\otimes_{\K}X \xrightarrow{\sC} Z \xrightarrow{p} X. \]
By the \emph{2 out of 3} axiom we obtain the following commutative diagram of solid arrows
\[ \xymatrix{	X_A \ar@{->}[d] \ar@{-->}[rr]|-{\sC\sW} & & Z_A \ar@{-->}[d] & & \\
X_A\otimes_A \K \ar@{->}[rr]|-{\sC\sW} \ar@/_1pc/@{->}[drr]|-{\pi} & & Z \ar@{->}[d]^{p}_{\sF\sW} & & X \ar@{->}[ll]_{\iota}^{\sC\sW} \ar@/^1pc/@{->}[dll]|-{id_X} \\
 & & X & & 	} \]
in $\CDGA_A^{\le 0}$, where by Corollary~\ref{corollary.CW} there exists a trivial cofibration $X_A\to Z_A$ lifting $X_A\otimes_A\K \to Z$. Now observe that $e = \iota p\colon Z\to Z$ is a trivial idempotent, whose \emph{fixed locus} coincides with $X$ by Proposition~\ref{prop.fixedloci}. Moreover, by Theorem~\ref{thm.liftingidempotents} there exists a trivial idempotent $\tilde{e}\colon Z_A\to Z_A$ lifting $e$. Now consider the \emph{fixed locus} $X_A' = \lim\left\{ \xymatrix{	Z_A \ar@/^/[r]^{id} \ar@/_/[r]_{\tilde{e}} & Z_A	}\right\}$ of $\tilde{e}$ together with the natural morphism $X_A'\xrightarrow{\tilde{\iota}} Z_A$, and observe that its reduction $X_A'\otimes_A\K \to Z_A\otimes_A\K$ is $\iota\colon X\to Z$ again by Proposition~\ref{prop.fixedloci}. To conclude, consider the following commutative diagram
\[ \xymatrix{	Z_A \ar@{->}[d] & & X_A' \ar@{->}|-{\sW}[ll] \ar@{->}[d] \\
Z \ar@{->}[d] & & X \ar@{->}[ll]_{\iota}^{\sC\sW} \ar@/^1pc/@{->}[dll]|-{\id_X} \\
X & & 	} \]
which proves that $X_A'\to X\xrightarrow{id} X$ is a $c$-deformation equivalent to $Z_A\to Z\to X$, and therefore to $X_A\to X_A\otimes_A\K\to X$.
\end{proof}

\begin{remark}
Strict deformations can be generalized to an abstract strong left-proper model category $\bM$, simply replacing \emph{weak equivalence} with \emph{isomorphism} in Definition~\ref{def.deformationXL}. Notice that in the model structure of $\CDGA_{\K}^{\le 0}$ every object is fibrant. Moreover, Theorem~\ref{thm.cfdefVSstrictcfdef} essentially follows from Theorem~\ref{thm.liftingidempotents} and Corollary~\ref{corollary.CW}, which in turn can be rephrased in an abstract model category. Therefore the statement of Theorem~\ref{thm.cfdefVSstrictcfdef} can be proved in a strong left-proper model category satisfying certain additional axioms.
\end{remark}

\medskip

\subsection{Perturbation stability of cofibrations}
Throughout all this subsection we shall denote by $f\colon A\to B$ a fixed cofibration in the model category $\CDGA_{\K}^{\le 0}$.

Recall that $\CGA^{\le 0}_{\K}$ is the category of graded-commutative $\K$-algebras concentrated in non-positive degrees, and consider the natural forgetful functor 
\[\#\colon  \DGCA_{\K}^{\le 0}\to 
\CGA_{\K}^{\le 0},\qquad R\mapsto R_{\#} \;.\]
In order to avoid possible ambiguities, in the next computations it is often 
convenient to denote a DG-algebra $R$ as a pair $(R_{\#},d_R)$, where $d_R$ is the differential: in particular the morphism $f\colon A\to B$ may be also denoted by $f\colon A\to (B_{\#},d_B)$.

\begin{proposition}\label{prop.stabilitycofibration} 
Let $f\colon A\to B$ be a cofibration in $\CDGA_{\K}^{\le 0}$. Moreover, let $I\subset A$ be a differential graded nilpotent ideal and consider a derivation $\eta\in \Der^1_{A}(B,f(I)B)$ such that $(d_B+\eta)^2=[d_B,\eta]+\frac{1}{2}[\eta,\eta]=0$. Then also the morphism $f\colon A\to (B_{\#},d_B+\eta)$ is a cofibration.
\end{proposition}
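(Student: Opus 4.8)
The plan is to exploit that $B'=(B_{\#},d_B+\eta)$ has the \emph{same} underlying graded $A$-algebra as $B$, so that the comparison of the two objects is entirely governed by the $I$-adic filtration, and then to apply the principle ``flat object with cofibrant reduction is cofibrant'' already used in Corollary~\ref{cor.flattrivialcofib}. Throughout I write $\bar A=A/I$ and $\bar B=B\otimes_A\bar A$.

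First I would check that $B'$ is flat over $A$. Since $\eta$ is $A$-linear with values in $f(I)B$, for every differential graded $A$-module $M$ the differential of $M\otimes_A B'$ equals that of $M\otimes_A B$ plus the operator $1\otimes\eta$, which strictly raises the finite $I$-adic filtration $F^k=M\otimes_A f(I^k)B$ (note $\eta(f(I^k)B)\subseteq f(I^{k+1})B$). Hence $M\otimes_A B'$ and $M\otimes_A B$ carry the same associated graded complex, and comparing the two finite spectral sequences shows that $-\otimes_A B'$ preserves quasi-isomorphisms and short exact sequences exactly as $-\otimes_A B$ does; the latter holds because $f$ is a cofibration, hence flat by Corollary~\ref{cor.cofibrationflat}. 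By Theorem~\ref{thm.2flatnessCDGA} this gives flatness of $B'$.

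Next I reduce modulo $I$. Because $\eta$ has image in $f(I)B$, the reduction of $d_B+\eta$ modulo $f(I)B$ is just $d_{\bar B}$, so $\bar{B'}=B'\otimes_A\bar A$ coincides, as a DG-algebra, with $\bar B$; moreover $\bar A\to\bar B$ is a cofibration, being the pushout of $f$ along $A\to\bar A$. Thus I am reduced to proving that a flat object $B'\in\CDGA_A^{\le 0}$ with cofibrant reduction is cofibrant, which is precisely Corollary~\ref{cor.flattrivialcofib} except that here $A$ need not be Artinian. Filtering $A\supset I\supset I^2\supset\cdots$, an immediate induction on the nilpotency degree (replacing $A$ by $A/I^{\,n}$ at each step) lets me assume $I^2=0$, so that it suffices to ascend cofibrancy along the single square-zero extension $A\to\bar A$. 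Concretely, mimicking Corollary~\ref{cor.flattrivialcofib}, I would lift the factorization $\bar A\xrightarrow{\sC}\bar B\xrightarrow{\id}\bar B$ of $\bar f$ to a factorization $A\xrightarrow{\sC}Q\xrightarrow{\sF\sW}B'$, so that $\bar Q\cong\bar B$, and then invoke Nakayama's lemma (Proposition~\ref{prop.naka}, which already holds for an arbitrary nilpotent differential graded ideal) to conclude that the trivial fibration $Q\to B'$ is an isomorphism; hence $A\to B'$ is a cofibration.

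The hard part is precisely this last ascent. The factorization- and idempotent-lifting results (Theorems~\ref{thm.liftingfactorization} and~\ref{thm.liftingidempotents}) are stated over a DG-Artin base, whereas what I truly need is the lifting of a trivial idempotent along the single square-zero extension $A\to A/I$ for a general $A$. This step does not use the Artin length induction but only one square-zero passage, so the argument of Theorem~\ref{thm.liftingidempotents} — resting on the acyclicity of the relevant derivation complex (Proposition~\ref{prop.Dacyclic}) and on the detection of weak equivalences by reduction (Corollary~\ref{cor.naka}, Proposition~\ref{prop.naka}) — should carry over once the principal ideal $\K t$ is replaced by the square-zero ideal $f(I)B$; making this replacement rigorous is where the genuine work lies. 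As a sanity check, when $f$ is an honest semifree extension the statement is immediate, because $B'$ is then literally the same polynomial extension $A[\{x_i\}]$ endowed with a different differential; all the content is therefore concentrated in handling the retracts implicit in a general cofibration, which is exactly what the idempotent-lifting step accomplishes via the fixed-locus construction of Proposition~\ref{prop.fixedloci}.
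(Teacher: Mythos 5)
Your plan --- prove that $B'=(B_{\#},d_B+\eta)$ is flat over $A$, then apply ``flat with cofibrant reduction is cofibrant'' --- is genuinely different from the paper's proof, but both of its steps have real gaps, and the first one is fatal as written. The spectral-sequence argument for flatness is invalid: the fact that $M\otimes_AB'$ and $M\otimes_AB$ carry the same finite filtration with the same associated graded complex does not allow you to transfer acyclicity from one total complex to the other. A finite filtration gives ``$E_1=0$ implies the total complex is acyclic'', but acyclicity of $M\otimes_AB$ does not imply that its associated graded is acyclic (already for $B=A$: with $A=\K[\epsilon]/(\epsilon^2)$ in degree $0$ and $M$ the acyclic complex $\cdots\xrightarrow{\epsilon}A\xrightarrow{\epsilon}A\to A/(\epsilon)\to 0$, the subcomplex $IM$ has zero differential, so the associated graded is not acyclic); moreover the spectral sequences of $M\otimes_AB$ and $M\otimes_AB'$ have different differentials from the $E_1$-page onward, since $d_1$ already sees the term $1\otimes\eta$. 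There is also a decisive structural objection: your argument makes no use of the non-positive grading, yet the statement it would prove is false in $\CDGA_{\K}$. Example~\ref{ex.bruttobrutto} is precisely a Maurer--Cartan perturbation $\eta\in\Der^1_A(B,f(I)B)$, with $I=(\epsilon)$ square-zero, of the semifree (hence flat) extension $A\to A[x_0,x_1,\ldots]$ with zero differential, and the perturbed morphism is not even a $\sW$-cofibration, hence not flat by Lemma~\ref{lem.flatWcof}; the paper points this out in the remark following Remark~\ref{rem.stabilitycofibration}. So flatness of $B'$ cannot be obtained by any degree-agnostic filtration comparison.

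The second step is not proved either: Corollary~\ref{cor.flattrivialcofib} (resting on Theorems~\ref{thm.liftingfactorization} and~\ref{thm.liftingidempotents} and Corollary~\ref{cor.naka}) is established only for surjections of DG-Artin rings, and you explicitly defer the generalization to a square-zero extension $A\to A/I$ of an arbitrary algebra, conceding that ``making this replacement rigorous is where the genuine work lies''. That deferred work is the actual content, and it is not routine: the proof of Theorem~\ref{thm.liftingidempotents} exploits that the kernel of a small extension is the one-dimensional space $\K t$, so the error $dr-rd$ of a graded idempotent lift equals $\iota\psi\pi$ with $\psi\in\Der^1_P(R,R;f)$, i.e.\ a derivation with values in a shift of the reduction $R$ itself, which is exactly the setting in which Proposition~\ref{prop.Dacyclic} is proved; for a general square-zero ideal $I$ the error takes values in $R\otimes_{A/I}I$, so Proposition~\ref{prop.Dacyclic}, Theorem~\ref{thm.liftingidempotents}, Theorem~\ref{thm.liftingfactorization} and Corollary~\ref{cor.flattrivialcofib} would all have to be re-proved with module coefficients over a non-Artinian base. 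The paper's proof avoids both problems: it writes $f$ as a weak retract $B\xrightarrow{i}S\xrightarrow{p}B$ of a semifree extension $if\colon A\to S$ (for which the claim is trivial, semifreeness being independent of the differential) and lifts $\eta$, by induction on the nilpotency exponent along $A_n=A/I^n$, to a compatible Maurer--Cartan element $\mu_n\in\Der^1_{A_n}(S_n,IS_n)$, the obstruction dying in the acyclic complex $\Der^2_{A_n}(S_n,K_n)$ with $K_n=\ker(p_n\colon I^{n-1}S_n\to I^{n-1}B_n)$; this exhibits $(B_{\#},d_B+\eta)$ directly as a retract of a semifree extension, with no flatness claim and no idempotent lifting needed. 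If you want to rescue your route, the missing coefficient/non-Artinian versions of Proposition~\ref{prop.Dacyclic} and Theorem~\ref{thm.liftingidempotents} are what must actually be written down.
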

\begin{proof}
The result is clear if $f$ is a semifree extension, since the semifree condition is independent of the differential. In general we can write $f$ as a weak retract of a semifree extension, i.e.
\[B\xrightarrow{i}S\xrightarrow{p}B,\qquad pi=\id_B,\quad if\colon A\to S\text{ semifree},\quad i,p\in \sW\,.\]
For simplicity of exposition, for every ideal $J\subset A$ denote $JB=f(J)B$, $JS=if(J)S$ , and 
\[ A_n=\dfrac{A}{I^n},\qquad B_n=B\otimes_A A_n=\dfrac{B}{I^nB},\qquad 
S_n=S\otimes_A A_n=\dfrac{S}{I^nS}\,.\]
Since $i,p$ are weak equivalences between cofibrant objects,  
applying the functor $-\otimes_AA_n$, for every $n$ we have  a weak retraction of cofibrant $A_n$-algebras 
\[B_n\xrightarrow{i_n}S_n\xrightarrow{p_n}B_n\,,\]
together with morphisms of short exact sequences
\begin{equation}\label{equ.coherentMC} 
\xymatrix{0\ar[r]&I^kS_n\ar[d]^{p_n}\ar[r]&S_n\ar[d]^{p_n}\ar[r]^{\alpha}&S_{k}\ar[r]\ar[d]^{p_k}&0\\
0\ar[r]&I^kB_n\ar[r]&B_n\ar[r]&B_{k}\ar[r]&0}\qquad 1\le k\le n
\end{equation}
and by the \emph{five lemma} every vertical arrow is a surjective quasi-isomorphism. For $n\ge 2$ this gives the morphism of short exact sequences   
\begin{equation}\label{equ.coherentMC2} 
\xymatrix{0\ar[r]&I^{n-1}S_n\ar[d]\ar[r]&IS_n\ar[d]^{p_n}\ar[r]^{\alpha}&IS_{n-1}
\ar[r]\ar[d]^{p_{n-1}}&0\\
0\ar[r]&I^{n-1}B_n\ar[r]&IB_n\ar[r]&IB_{n-1}\ar[r]&0}
\end{equation}
with the vertical arrows surjective quasi-isomorphisms.

Denote by $K_n$ the kernel of $p_n\colon I^{n-1}S_n\to I^{n-1}B_n$.
Notice that $\eta$ induce a coherent sequence  
$\eta_n\in \Der^*_{A_n}(B_n,IB_n)$ of solutions of the Maurer-Cartan equation in $\Der^*_{\K}(B_n,B_n)$. 

We now prove by induction on $n$ that there exists a 
coherent sequence  
$\mu_n\in \Der^*_{A_n}(S_n,IS_n)$ of solution of  the Maurer-Cartan equation in
$\Der^*_{\K}(S_n,S_n)$ such that 
\[ i_n\eta_n=\mu_n i_n,\qquad p_n\mu_n=\eta_n p_n\,.\]
This will imply that every $((B_{n})_{\#},d_{B_n}+\eta_n)$ is a retract of a semifree extension of $A_n$.

The case $n=1$ is clear since $IB_1=IS_1=0$. Now assume that $n\ge 2$ and that $\mu_{n-1}\in \Der^1_{A_{n-1}}(S_{n-1},IS_{n-1})$ as above is constructed. 

The first step is to lift $\mu_{n-1}$ to a derivation $\tau\in \Der^1_{A_{n}}(S_{n},IS_{n})$ such that $p_n\tau=\eta_n p_n$ and $\alpha \tau=\mu_{n-1}\alpha$.
The diagram \eqref{equ.coherentMC2} gives a surjective quasi-isomorphism
\[ IS_n\to IS_{n-1}\times_{IB_{n-1}}IB_n\]
and then, since $S_n$ is cofibrant the derivation  
\[(\mu_{n-1}\alpha,\eta_n p_n)\colon S_n\to IS_{n-1}\times_{IB_{n-1}}IB_n\]
can be lifted to a derivation $\tau\in \Der^1_{A_n}(S_n,IS_n)$ by 
Remark~\ref{rem.propertyofdef}.

We have $p_n(i_n\eta_n-\tau i_n)=0$ and $\alpha (i_n\eta_n-\tau i_n)=0$ and then 
$\sigma:=i_n\eta_n-\tau i_n\in \Der^1_{A_n}(B_n,K_n)$. 
Since $i_n$ is a weak equivalence of cofibrant objects, by Remark~\ref{rem.propertyofdef} 
the derivation $\sigma$ extends to $\Der^1_{A_n}(S_n,K_n)$: adding an extension of $\sigma$ to $\tau$ we can therefore assume 
\[ p_n\tau=\eta_n p_n,\qquad  \tau i_n=i_n\eta_n,\quad \alpha \tau=\mu_{n-1}\alpha\,.\] 
Finally we define
\[ r=(d_{S_n}+\tau)^2\in \Der^2_{A_n}(S_n,K_n) \;; \]
notice that $r$ is a cocycle in $\Der^2_{A_n}(S_n,I^{n-1}S_n)$ since $r(d_{S_n}+\tau)=(d_{S_n}+\tau)r$ and $r\tau=\tau r=0$ by the vanishing of $I^nS_n=0$.
Since $S_n$ is cofibrant and $K_n$ is acyclic the cocycle $r$ is a coboundary, say $r=d\psi$, and then
$\mu_{n}=\tau-\psi$ is the required solution of the Maurer-Cartan equation.
\end{proof}

\begin{remark}\label{rem.stabilitycofibration}
We shall use Proposition~\ref{prop.stabilitycofibration} in the  situation where we have a cofibration
$f\colon A\to B$, a morphism $g\colon A\to C$ in $\CDGA_{\K}^{\le 0}$, a nilpotent differential ideal 
$I\subset A$ and an \emph{isomorphism of graded algebras} $\theta\colon B\to C$ such that 
$\theta f=g$ and $\theta\colon B\otimes_A\frac{A}{I}\to C\otimes_A\frac{A}{I}$ is a morphism in 
$\CDGA_{\K}^{\le 0}$.
Then $g\colon A\to C$ is isomorphic to $f\colon A\to (B_{\#}, \theta^{-1} d_C\theta=d_B+\eta)$
for some $\eta\in \Der^1_{\K}(B, f(I)B)$. Finally, since
\[ \eta(f(a))=(d_B+\eta)(f(a))-d_B(f(a))=(d_B+\eta)(f(a))-f(d_A(a))=0 \]
for every $a\in A$ we have $\eta\in \Der^1_{A}(B, f(I)B)$ and by Proposition~\ref{prop.stabilitycofibration} also $g$ is a cofibration.
\end{remark}

\begin{remark}
Both Examples~\ref{ex.esempiononcofibrante} and \ref{ex.bruttobrutto} show that Proposition~\ref{prop.stabilitycofibration} is false in the model category $\CDGA_{\K}$ of unbounded commutative DG-algebras. In fact the morphism
\[ \K[x]\to \K[y,x],\qquad \bar{x}=1,\; \bar{y}=-1,\; dy=yx,\]
is not a $\sW$-cofibration although it can be seen as a small perturbation of the cofibration 
\[ \K[x]\to \K[y,x],\qquad \bar{x}=1,\; \bar{y}=-1,\; dy=0\,.\]
Philosophically this means that the general principle that derivations of cofibrant resolutions controls deformations is not valid in $\CDGA_{\K}$.  This was already  pointed out in \cite{H04} and  a slight modification of \cite[Example 4.3]{H04} shows that 
the functor of strict deformations $D_R\colon \Art_{\K}\to \Set$ of the unbounded cofibrant algebra 
\[R=\K[\ldots,x_{-2},x_{-1},x_0,x_1,x_2,\ldots],\qquad  \overline{x_i}=i,\; dx_i=0,\]  does not satisfy Schlessinger's conditions $(H_1), (H_2)$ of  \cite{S68}.  The result of Proposition~\ref{prop.stabilitycofibration} is assumed in \cite[4.2.2]{H04} apparently  without any additional explanations. 
\end{remark}

\bigskip
\subsection{DG-Lie algebra controlling deformations of DG-algebras}\label{section.deformationsDGalgebras}

This subsection aims to describe the differential graded Lie algebra controlling derived deformations of a commutative DG-algebra concentrated in non-positive degrees.
To this aim, the first step is the study of strict deformations of cofibrant objects; we begin by proving some preliminary results.

Let $A\in\DGArt_{\K}^{\le 0}$ and consider a morphism $A\to R_A\in\CDGA_{\K}^{\le 0}$. 
According to Corollary~\ref{cor.flattrivialcofib} we have that  $A\to R_A$ is a cofibration,
if and only if $\K\to R_A\otimes_A\K$ is a cofibration and $A\to R_A$ is flat.

\begin{proposition}\label{prop.DefCofibrantDiagram}
Let $f_A\colon A\to R_A$ be a morphism in $\CDGA_{\K}^{\le 0}$ with 
$A\in\DGArt_{\K}^{\le 0}$,  and consider the three pushout squares
\[ \xymatrix{A\ar[d]^{f_A}\ar[r]\ar@{}[dr]|(.65){\Big\ulcorner}&\K\ar[r]\ar[d]^f\ar@{}[dr]|(.65){\Big\ulcorner}&A\ar[d]^g\ar[r]\ar@{}[dr]|(.65){\Big\ulcorner}&\K\ar[d]^f\\
R_A\ar[r]^{\pi}&R\ar[r]&R\otimes_{\K}A\ar[r]^-{p}&R}\]
where the morphism $A\to \K$ in the upper row is the projection onto the residue field. 
 Then the following conditions are equivalent:
\begin{enumerate}
\item $f_A$ is a cofibration;
\item $f$ is a cofibration and there exists an isomorphism $\tilde{h}\colon (R\otimes_{\K}A)_{\#}\to (R_A)_{\#}$ of graded algebras 
such that $\pi\tilde{h}=p$ and $\tilde{h}g=f_A$.
\end{enumerate}
\end{proposition}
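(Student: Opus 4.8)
The plan is to read both implications as a comparison between $f_A$ and its \emph{trivial} counterpart $g\colon A\to R\otimes_{\K}A$, which is nothing but the pushout of $f\colon\K\to R$ along the unit $\K\to A$ (the second square). Two observations will be used throughout, both immediate from the model structure. First, reduction along $A\to\K$ is a pushout, so if $f_A$ is a cofibration then so is its reduction $f$ (cofibrations are stable under cobase change); this already yields the first half of (2). Second, if $f$ is a cofibration then $g$, being a pushout of $f$, is a cofibration as well, and in particular $R\otimes_{\K}A$ is flat over $A$ by Corollary~\ref{cor.cofibrationflat}. Finally, the base change formula \eqref{equ.projectionformula} identifies the third square, so that $R=(R\otimes_{\K}A)\otimes_A\K$ with reduction map $p$, consistently with $R=R_A\otimes_A\K$ with reduction map $\pi$.

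For $(2)\Rightarrow(1)$ I would apply Remark~\ref{rem.stabilitycofibration} directly. Since $f$ is a cofibration, $g\colon A\to R\otimes_{\K}A$ is a cofibration; we are given the morphism $f_A\colon A\to R_A$, the nilpotent differential ideal $\mA\subset A$, and the graded isomorphism $\tilde h\colon (R\otimes_{\K}A)_{\#}\to(R_A)_{\#}$ with $\tilde h g=f_A$. It remains only to check that $\tilde h$ reduces to a morphism of $\CDGA_{\K}^{\le 0}$ modulo $\mA$. Comparing $\pi\tilde h=p$ with the naturality of the reduction functor $-\otimes_A\K$ gives $\bar{\tilde h}\circ p=p$, whence $\bar{\tilde h}=\id_R$ because $p$ is surjective; and $\id_R$ is of course a DG-morphism. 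Thus the hypotheses of Remark~\ref{rem.stabilitycofibration} (equivalently Proposition~\ref{prop.stabilitycofibration}) are met, and $f_A$ is a cofibration.

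For $(1)\Rightarrow(2)$ the first half of (2) is the first observation above, and $g$ is again a cofibration; the real content is to produce $\tilde h$. The square
\[ \xymatrix{ A\ar[d]_{g}\ar[r]^{f_A} & R_A\ar[d]^{\pi} \\ R\otimes_{\K}A\ar[r]^-{p} & R } \]
commutes in $\CDGA_A^{\le 0}$ (both composites are the structure map $A\to\K\xrightarrow{f}R$), its left vertical $g$ is a cofibration, and its right vertical $\pi$ is surjective. Hence Lemma~\ref{lemma.liftingidempotents} provides a lifting in $\CGA_{\K}^{\le 0}$, i.e.\ a morphism of graded $A$-algebras $\tilde h\colon R\otimes_{\K}A\to R_A$ with $\tilde h g=f_A$ and $\pi\tilde h=p$. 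Reducing the latter modulo $\mA$ as above gives $\bar{\tilde h}=\id_R$; and since $R_A$ is flat over $A$ (Corollary~\ref{cor.cofibrationflat}) and $R\otimes_{\K}A$ is free over $A$, the isomorphism form of Nakayama's lemma upgrades $\tilde h$ to an isomorphism of graded $A$-algebras. This establishes (2).

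The routine verifications — commutativity of the square and the identification of both reductions with $\id_R$ — are straightforward; the only step I expect to require a word of care is the last one, namely that the graded lift $\tilde h$ is an honest isomorphism. The subtlety is that Proposition~\ref{prop.naka} (and Corollary~\ref{cor.naka}) are phrased for DG-algebra maps, whereas $\tilde h$ is only a graded morphism. This is resolved by observing that the \emph{isomorphism} half of Proposition~\ref{prop.naka} is purely a statement about maps of flat graded $A$-modules — its proof via the snake lemma applied to \eqref{equ.naka} never uses the differential — so it applies verbatim to $\tilde h$. This is the main (and rather mild) obstacle in the argument.
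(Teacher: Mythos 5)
Your proof is correct and follows essentially the same route as the paper: both directions hinge on Lemma~\ref{lemma.liftingidempotents} to produce the graded lift $\tilde h$, a Nakayama-type induction using flatness of $f_A$ to upgrade $\tilde h$ to an isomorphism, and Proposition~\ref{prop.stabilitycofibration} (via Remark~\ref{rem.stabilitycofibration}) for $(2)\Rightarrow(1)$. The only cosmetic differences are that the paper obtains $\tilde h$ by lifting $h\colon R\to R_A$ over $\K$ and then extending scalars, whereas you lift directly over $A$, and the paper re-runs the small-extension/five-lemma induction at the graded level rather than citing a differential-free form of Proposition~\ref{prop.naka} --- your observation that the snake-lemma argument never uses the differential is exactly the justification the paper's explicit induction provides.
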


\begin{proof}
$(1)\Rightarrow (2)$. First notice that $f$ is a cofibration, since cofibrations are stable under pushouts. Since $\pi$ is surjective, by Lemma~\ref{lemma.liftingidempotents} the commutative diagram of solid arrows
\[ \xymatrix{	\K \ar@{->}[r] \ar@{->}[d] & R_A \ar@{->}[d]^{\pi} \\
R \ar@{->}[r]^{\id} \ar@{-->}[ur]^{h} & R	} \]
admits the dashed lifting $h\colon R\to R_A$, which is a morphism of unitary graded $\K$-algebras. By scalar extension, this gives a morphism $\tilde{h}\colon R\otimes_{\K}A\to R_A$ of graded $A$-algebras such that $\pi\tilde{h}=p$ and $\tilde{h}g=f_A$. We are only left with the proof that $\tilde{h}$ is in fact an isomorphism.

Recall that $\CDGA_{\K}^{\le 0}$ is a strong left-proper model category, so that in particular $f_A$ is flat. By induction on the length of $A$, we shall prove that the flatness of $f_A$ implies that $\tilde{h}$ is an isomorphism of graded algebras. To this aim, consider a surjective morphism $A\to B$ in $\DGArt_{\K}^{\le 0}$, and recall that choosing a cocycle $t\not= 0$ in the higher non-zero power of the maximal ideal $\mathfrak{m}_A$, we may assume the morphism $A\to B$ comes from a small extension
\[ 0 \to \K t \to A \to B \to 0 \]
in $\DGArt_{\K}^{\le 0}$; where $\K t$ is a complex concentrated in degree $i=\deg(t)$, and $\K t \to A$ is the inclusion. In fact, every surjective map in $\DGArt_{\K}^{\le 0}$ factors in a sequence of small extensions as above. Now consider the following commutative diagram of graded $A$-modules
\[ \xymatrix{	0\ar@{->}[r]  & R[-i] \ar@{->}[d]^{\id} \ar@{->}[r] & R\otimes_{\K}A \ar@{->}[d]^{\tilde{h}} \ar@{->}[r] & R\otimes_{\K}B \ar@{->}[d]^{\cong} \ar@{->}[r] & 0  \\
0\ar@{->}[r] & R[-i] \ar@{->}[r] & R_A \ar@{->}[r] & R_A\otimes_AB \ar@{->}[r] & 0	} \]
where the rows are exact, being $R_A$ an $A$-flat object. The statement follows by the five lemma.

$(2)\Rightarrow (1)$. Let $d$ and $\delta$ be the differentials of $R\otimes_{\K}A$ and $R_A$ respectively. 
The same argument used in Remark~\ref{rem.stabilitycofibration} implies that 
\[\eta=\tilde{h}^{-1}\delta\tilde{h}-d\in \Der_A^1(R\otimes_{\K}A,R\otimes_{\K}\mathfrak{m}_A)\,,\]
and then $f_A$ is isomorphic, via $\tilde{h}$, to $g\colon A\to ((R\otimes_{\K}A)_{\#}, d+\eta)$ in $\CDGA_{A}^{\le 0}$.
Since $\mathfrak{m}_A$ is a nilpotent ideal the conclusion follows by  the assumption $\pi\tilde{h}=p$ and
Proposition~\ref{prop.stabilitycofibration}.
\end{proof}

As already outlined above, we first deal with the functor of (derived) strict deformations $\D_R\colon\DGArt_{\K}^{\le 0}\to\Set$ associated to a cofibrant object $R\in\CDGA_{\K}^{\le 0}$.
To this aim, recall that to every $R\in\CDGA_{\K}^{\le 0}$ it is associated the differential graded Lie algebra $\Der^{\ast}_{\K}(R,R)$ of derivations, which in turn induces a deformation functor
\[ \Def_{\Der^{\ast}_{\K}(R,R)}\colon \DGArt_{\K}^{\le 0}\to \Set \]
as Maurer-Cartan solutions modulo gauge equivalence. In the following we shall denote by $\MC_{\Der^{\ast}_{\K}(R,R)}(A)$ the set of Maurer-Cartan elements, i.e.
\[ \MC_{\Der^{\ast}_{\K}(R,R)}(A) = \left\{ \eta\in \Der^1_{\K}(R,R)\otimes_{\K}\mathfrak{m}_A \, \vert \,  d\eta + \frac{1}{2}[\eta,\eta] = 0 \right\} \; . \]

\begin{theorem}\label{thm.DefCofibrantDiagram}
Let $R\in\CDGA_{\K}^{\le 0}$ be a cofibrant DG-algebra. Then there exists a natural isomorphism of functors
\[ \psi_1\colon \Def_{\Der^{\ast}_{\K}(R,R)} \to \D_R \]
induced by $\psi_1(\xi_A) = ((R\otimes_{\K}A)_{\#}, d_R+\xi_A)$ for every $\xi_A\in\MC_{\Der^{\ast}_{\K}(R,R)}(A)$, $A\in\DGArt_{\K}^{\le 0}$.
\begin{proof}
Fix $A\in\DGArt_{\K}^{\le 0}$ and notice that for any strict deformation $A\to R_A\to R$ in $\D_R(A)$ the map $A\to R_A$ is a cofibration. Moreover, Proposition~\ref{prop.DefCofibrantDiagram} implies that the datum of a strict deformation $A\to R_A\to R$ in $\D_R(A)$ is equivalent to a perturbation $d_R+\xi_A$ of the differential $d_R\in\Der_{\K}^1(R,R)$; which in turn corresponds to an element $\xi_A\in\Der_{\K}^1(R,R)\otimes_{\K}\mathfrak{m}_A$ such that $(d_R+\xi_A)^2 = 0$. Moreover, the integrability condition $(d_R+\xi_A)^2 = 0$ can be written in terms of the Lie structure of $\Der^{\ast}_{\K}(R,R)\otimes_{\K}\mathfrak{m}_A$:
\[ 0 = (d_R+\xi_A)^2 = d_R\xi_A + \xi_A d_R + \xi_A\xi_A = \delta(\xi_A) + \frac{1}{2}[\xi_A,\xi_A] \]
where we denoted by $\delta$ and $[-,-]$ the differential and the bracket of the DG-Lie algebra $\Der^{\ast}_{\K}(R,R)\otimes_{\K}\mathfrak{m}_A$ respectively.

The statement follows by observing that the gauge equivalence corresponds to isomorphisms of graded $A$-algebras  whose reduction to the residue field is the identity on $R$. In fact, given such an isomorphism $\varphi_A\colon R_A\to R_A'$ we can write $\varphi_A = \id + \eta_A$ for some $\eta_A\in\Hom^0_{\K}(R,R)\otimes_{\K}\mathfrak{m}_A$. Now, since $\K$ has characteristic $0$, we can take the logarithm to obtain $\varphi_A = e^{\theta_A}$ for some $\theta_A\in\Der^0_{\K}(R,R)\otimes_{\K}\mathfrak{m}_A$, see e.g. \cite[Sec. 4]{Man2}.
\end{proof}
\end{theorem}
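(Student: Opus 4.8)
The plan is to build the natural isomorphism $\psi_1$ objectwise over each $A\in\DGArt_{\K}^{\le 0}$ and then check compatibility with base change. First I would confirm that $\psi_1$ is well defined at the level of Maurer--Cartan elements. Given $\xi_A\in\MC_{\Der^{\ast}_{\K}(R,R)}(A)$, one extends $\xi_A$ $A$-linearly to a degree-one $A$-derivation of the graded algebra $(R\otimes_{\K}A)_{\#}$ with values in $R\otimes_{\K}\mathfrak{m}_A$, and the Maurer--Cartan equation $\delta\xi_A+\frac{1}{2}[\xi_A,\xi_A]=0$ is exactly the condition $(d_R+\xi_A)^2=0$ (using $[\xi_A,\xi_A]=2\xi_A^2$ for the odd element $\xi_A$ and $\delta=[d_R,-]$), so $d_R+\xi_A$ is a genuine differential. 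Since $R$ is cofibrant the map $\K\to R$, and hence its pushout $A\to R\otimes_{\K}A$, is a cofibration; as $\xi_A$ is a perturbation supported on the nilpotent differential ideal $\mathfrak{m}_A$, Proposition~\ref{prop.stabilitycofibration} shows $A\to((R\otimes_{\K}A)_{\#},d_R+\xi_A)$ is again a cofibration, hence flat by Corollary~\ref{cor.cofibrationflat}, with reduction modulo $\mathfrak{m}_A$ equal to $R$. Thus $\psi_1(\xi_A)$ is a strict deformation.

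Next I would show $\psi_1$ descends to gauge-equivalence classes and is injective there. Any $\theta_A\in\Der^0_{\K}(R,R)\otimes_{\K}\mathfrak{m}_A$ exponentiates (the series terminates by nilpotence of $\mathfrak{m}_A$) to an automorphism $e^{\theta_A}$ of the graded $A$-algebra $(R\otimes_{\K}A)_{\#}$ reducing to $\id_R$, and the key computation is that conjugation intertwines the perturbed differentials, $e^{\theta_A}(d_R+\xi_A)e^{-\theta_A}=d_R+(e^{\theta_A}\ast\xi_A)$, where $e^{\theta_A}\ast\xi_A$ realizes the gauge action in $\Der^{\ast}_{\K}(R,R)\otimes_{\K}\mathfrak{m}_A$. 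Hence gauge-equivalent Maurer--Cartan elements give isomorphic strict deformations, and $\psi_1$ is well defined on $\Def_{\Der^{\ast}_{\K}(R,R)}(A)$. Conversely, an isomorphism $\varphi_A$ between two such deformations is a graded $A$-algebra isomorphism reducing to $\id_R$; writing $\varphi_A=\id+\eta_A$ with $\eta_A$ nilpotent-valued and taking the logarithm (here characteristic zero is used, cf.\ \cite[Sec.\ 4]{Man2}) produces $\theta_A$ with $\varphi_A=e^{\theta_A}$, and the same conjugation identity forces the two Maurer--Cartan elements to be gauge equivalent. This yields injectivity.

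For surjectivity I would start from an arbitrary strict deformation $A\to R_A\to R$. Since $\K\to R_A\otimes_A\K\cong R$ is a cofibration and $A\to R_A$ is flat, Corollary~\ref{cor.flattrivialcofib} shows $A\to R_A$ is a cofibration, so Proposition~\ref{prop.DefCofibrantDiagram} supplies a graded-algebra isomorphism $\tilde{h}\colon(R\otimes_{\K}A)_{\#}\xrightarrow{\cong}(R_A)_{\#}$ compatible with the structure maps to $R$. Transporting the differential of $R_A$ through $\tilde{h}$ gives a differential of the form $d_R+\xi_A$, and the argument of Remark~\ref{rem.stabilitycofibration} identifies $\xi_A$ with an element of $\Der^1_{\K}(R,R)\otimes_{\K}\mathfrak{m}_A$; the relation $(d_R+\xi_A)^2=0$ says it is Maurer--Cartan, and $\tilde{h}$ exhibits $\psi_1(\xi_A)\cong R_A$. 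Naturality in $A$ is then immediate: for a morphism $A\to B$, both base change $-\otimes_A B$ on strict deformations and the induced map $\mathfrak{m}_A\to\mathfrak{m}_B$ on Maurer--Cartan elements send $d_R+\xi_A$ to $d_R+(\xi_A\otimes_A B)$, so the two structure maps agree.

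I expect the main obstacle to be the second step, the precise matching of gauge equivalence with isomorphism of strict deformations. The forward direction, that conjugation by $e^{\theta_A}$ implements the gauge action, is a standard but slightly delicate identity in $\Der^{\ast}_{\K}(R,R)\otimes_{\K}\mathfrak{m}_A$, while the reverse direction rests on the fact that every graded-algebra automorphism reducing to the identity is the exponential of a degree-zero derivation, which is exactly where characteristic zero and the nilpotence of $\mathfrak{m}_A$ intervene. Everything else reduces cleanly to Proposition~\ref{prop.DefCofibrantDiagram}, Proposition~\ref{prop.stabilitycofibration}, and Corollary~\ref{cor.flattrivialcofib}.
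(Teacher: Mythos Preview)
Your proposal is correct and follows essentially the same approach as the paper: both arguments hinge on Proposition~\ref{prop.DefCofibrantDiagram} to identify strict deformations with perturbations of the differential (Maurer--Cartan elements), and on the exponential/logarithm correspondence in characteristic zero to match gauge equivalence with isomorphisms of strict deformations. Your write-up is simply more explicit, separating out well-definedness (via Proposition~\ref{prop.stabilitycofibration}), injectivity, surjectivity, and naturality, whereas the paper compresses these steps by appealing directly to the equivalence established in Proposition~\ref{prop.DefCofibrantDiagram}.
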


\begin{corollary}\label{thm.deformationsDGalgebras}
Consider $X\in \CDGA_{\K}^{\le 0}$ together with a cofibrant replacement $\K \to R\xrightarrow{\pi} X$ in $\CDGA_{\K}^{\le 0}$. Then there exists a natural isomorphism of functors $ \Def_{\Der^{\ast}_{\K}(R,R)} \cong \Def_X$, which is defined on every $A\in\DGArt_{\K}^{\le 0}$ by
\[ \begin{aligned}
\psi_A\colon \Def_{\Der^{\ast}_{\K}(R,R)}(A) & \to \Def_X(A) \\
[\xi_A] & \mapsto \left[ A\to  ((R\otimes_{\K}A)_{\#}, d_R+\xi_A) \xrightarrow{\pi \circ p} X \right]
\end{aligned} \]
where $p\colon R\otimes_{\K}A\to R$ is the natural projection. 
In particular the tangent-obstruction complex of $\Def_X$ is $\Ext_{X}^*(\mathbb{L}_{X/\K},X)$, where
 $\mathbb{L}_{X/\K}\in Ho(\DGMod(X))$ denotes the cotangent complex of $X$.  
\end{corollary}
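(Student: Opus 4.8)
The plan is to obtain the isomorphism as a composite of three natural isomorphisms that are already available, and then to compute the cohomology of the controlling DG-Lie algebra in terms of the cotangent complex. Throughout I would use that in $\CDGA_{\K}^{\le 0}$ every object is fibrant and that $\pi\colon R\to X$ is a weak equivalence, being part of a cofibrant replacement.

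First I would invoke the homotopy invariance of deformations. Applying Theorem~\ref{thm.homotopyequivalence} with $K=\K$, $f\colon \K\to R$ and $\tau=\pi\colon R\to X$ (a weak equivalence of fibrant objects) yields a natural bijection $\pi_{\ast}\colon \Def_R(A)\to\Def_X(A)$, $(A\to R_A\to R)\mapsto(A\to R_A\to R\xrightarrow{\pi}X)$. Next, since $R$ is cofibrant, Theorem~\ref{thm.cfdefVSstrictcfdef} provides the natural bijection $\eta_A\colon \D_R(A)\to\Def_R(A)$ between strict and ordinary deformations. Finally, still because $R$ is cofibrant, Theorem~\ref{thm.DefCofibrantDiagram} gives the natural isomorphism $\psi_1\colon \Def_{\Der^{\ast}_{\K}(R,R)}(A)\to\D_R(A)$, $\psi_1(\xi_A)=((R\otimes_{\K}A)_{\#},d_R+\xi_A)$. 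Setting $\psi_A=\pi_{\ast}\circ\eta_A\circ\psi_1$ produces a natural isomorphism $\Def_{\Der^{\ast}_{\K}(R,R)}\cong\Def_X$, and the only verification needed is that $\psi_A$ agrees with the stated formula. Tracing $[\xi_A]$, the strict deformation $\psi_1(\xi_A)$ has underlying graded algebra $(R\otimes_{\K}A)_{\#}$ with differential $d_R+\xi_A$ and structural map to $R$ equal to the reduction $p\colon R\otimes_{\K}A\to R$, since $(R\otimes_{\K}A)\otimes_A\K\cong R$ and $\xi_A$ has values in $\mathfrak{m}_A$; applying $\eta_A$ and then $\pi_{\ast}$ post-composes with $\pi$, giving exactly $A\to((R\otimes_{\K}A)_{\#},d_R+\xi_A)\xrightarrow{\pi\circ p}X$.

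For the final assertion I would invoke standard DG-Lie theory: the tangent space of $\Def_L$ is $H^1(L)$ and its obstructions lie in $H^2(L)$, so it suffices to identify $H^{\ast}(\Der^{\ast}_{\K}(R,R))$ with $\Ext^{\ast}_X(\mathbb{L}_{X/\K},X)$. Writing $\Der^{\ast}_{\K}(R,R)=\Hom_R(\Omega_{R/\K},R)$, where $\Omega_{R/\K}$ is the module of K\"ahler differentials, I note that $\Omega_{R/\K}$ is a semifree $R$-module because $R$ is a cofibrant (hence semifree) $\K$-algebra; consequently $\Omega_{R/\K}\otimes_R X$ is a representative of the cotangent complex $\mathbb{L}_{X/\K}$ which, being semifree over $X$, computes $\Ext^{\ast}_X(\mathbb{L}_{X/\K},X)$ via $\Hom_X(\Omega_{R/\K}\otimes_R X,X)$. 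By the extension-of-scalars adjunction $\Hom_X(\Omega_{R/\K}\otimes_R X,X)\cong\Hom_R(\Omega_{R/\K},X)$, while the semifreeness of $\Omega_{R/\K}$ together with the quasi-isomorphism $\pi\colon R\to X$ forces $\Hom_R(\Omega_{R/\K},R)\to\Hom_R(\Omega_{R/\K},X)$ to be a quasi-isomorphism. Chaining these identifications gives $H^n(\Der^{\ast}_{\K}(R,R))\cong\Ext^n_X(\mathbb{L}_{X/\K},X)$.

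The corollary is essentially formal once Theorems~\ref{thm.homotopyequivalence}, \ref{thm.cfdefVSstrictcfdef} and \ref{thm.DefCofibrantDiagram} are granted, so the substance of the argument is bookkeeping: matching the composite $\pi_{\ast}\circ\eta_A\circ\psi_1$ with the stated map and confirming naturality in $A$, which each factor enjoys. The one genuinely non-formal point is the cotangent-complex computation, where care is needed in passing from coefficients in $R$ to coefficients in $X$ and in justifying the semifreeness of $\Omega_{R/\K}$; independence of the whole isomorphism from the chosen cofibrant replacement, although not required by the statement, likewise follows from homotopy invariance applied to a comparison of two replacements.
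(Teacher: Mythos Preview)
Your proof is correct and follows essentially the same route as the paper: the isomorphism is obtained as the composite of the bijections supplied by Theorems~\ref{thm.homotopyequivalence}, \ref{thm.cfdefVSstrictcfdef} and \ref{thm.DefCofibrantDiagram}, and the tangent--obstruction identification proceeds via $\Der^{\ast}_{\K}(R,R)=\Hom_R(\Omega_{R/\K},R)\to\Hom_R(\Omega_{R/\K},X)=\Hom_X(\mathbb{L}_{X/\K},X)$. One small caveat: a cofibrant $R$ need only be a \emph{retract} of a semifree algebra, so $\Omega_{R/\K}$ is a priori only a retract of a semifree $R$-module; this is still enough for your argument (or you may appeal directly to Remark~\ref{rem.propertyofdef}, as the paper does).
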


\begin{proof} The first part is an immediate consequence of Theorem~\ref{thm.DefCofibrantDiagram}, Theorem~\ref{thm.cfdefVSstrictcfdef}  and Theorem~\ref{thm.homotopyequivalence}. 
Since the cotangent complex of $X$ may be defined as 
$\mathbb{L}_{X/\K}=\Omega_{R/\K}\otimes_R X$ (\cite{Hin,QuillenCR}), the second part follows by base change formula 
\[ \Der^*_{\K}(R,R)=\Hom^*_R(\Omega_{R/\K},R)\xrightarrow{\sF\sW}
\Hom^*_R(\Omega_{R/\K},X)=\Hom^*_X(\Omega_{R/\K}\otimes_RX,X)\,.\]
\end{proof}

For readers convenience we briefly recall the geometric meaning of the  
tangent-obstruction complex for the functor $\Def_X\colon \DGArt_{\K}^{\le 0}\to \Set$, for details see \cite{Man2}: if $u$ is a variable of degree $i$ annihilated by the maximal ideal then  
$\Def_X(\K[u])=\Ext^{1-i}_X(\mathbb{L}_{X/\K},X)$, while the obstructions to lifting  deformations along a small extension 
$0\to \K u\to A\to B\to 0$ belong to $\Ext^{2-i}_X(\mathbb{L}_{X/\K},X)$.

\bigskip
\begin{acknowledgement}  This research is partially supported  by Italian MIUR under PRIN project 2015ZWST2C \lq\lq Moduli spaces and Lie theory\rq\rq. Both authors thank Ruggero Bandiera, Barbara Fantechi and Domenico Fiorenza for useful discussions on the subject of this paper.  
\end{acknowledgement}

\end{document}